\documentclass[reqno, 11pt, a4paper]{amsart} 
\usepackage[english]{babel}
\usepackage{amsfonts, amsmath, amsthm, amssymb,amscd,indentfirst}
\usepackage{mathtools}

\usepackage[dvipsnames]{xcolor}
\usepackage{newtxtext,newtxmath}
\usepackage{enumerate}
\usepackage{enumitem}
\usepackage{esint}
\usepackage{stackrel}
\usepackage{anysize} 
 \usepackage[mathscr]{euscript}  
 
 \usepackage[left=2.5cm, right=2.5cm, top=2cm, bottom=2cm]{geometry}

\usepackage{microtype}    

\usepackage{etoolbox}  

\makeatletter  

\patchcmd{\@tocline}
  {\hfil}  
  {\leaders\hbox to 0.6em{\hss.\hss}\hfill}  
  {}{}  

\renewcommand{\@pnumwidth}{1.5em}  

\def\l@section      {\@tocline{1}{0.6em}{0em}{}{}}  

\def\l@subsection   {\@tocline{2}{0.3em}{2em}{}{}}  

\def\l@subsubsection{\@tocline{3}{0.2em}{4.7em}{}{}}  

\patchcmd{\@tocline}
  {\vskip #1}  
  {\vskip #1\relax}  
  {}{}

\makeatother  

\theoremstyle{definition}
\newtheorem{example}{Example}[section]
\newtheorem{theorem}{Theorem}[section]
\newtheorem{proposition}[theorem]{Proposition}
\newtheorem{lemma}[theorem]{Lemma}
\newtheorem{definition}[theorem]{Definition}
\newtheorem{corollary}[theorem]{Corollary}
\newtheorem{remark}[theorem]{Remark}

 \numberwithin{equation}{section}

\newcommand{\revc}{{\bf r}}   
\newcommand{\grad}{\boldsymbol{\nabla}}  
\newcommand{\gaglia}{\gamma_{p^-}}  
\newcommand{\deriv}{\mathcal{D}}   
\newcommand{\vltr}{\mathcal{V}}
\newcommand{\consgaglia}{\mathtt{C}_{p^-}}  
 \newcommand{\ds}{\textnormal{d}A_{\vartheta}}
 \newcommand{\spt}{\operatorname{supp}}

\newcommand{\wghtv}{\vartheta}
\newcommand{\loc}{\operatorname{loc}}

\newcommand{\fnf}{F}
\newcommand{\fnfs}{F ^\circ}
\newcommand{\rmf}{\mathcal M}
\newcommand{\ctb}{\mathscr{C}}
\newcommand{\rdo}{\mathcal{R} _0}

\newcommand{\esssup}{\operatorname{ess}\sup}
\newcommand{\essinf}{\operatorname{ess}\inf}

\newcommand{\piu}{\rho_{1}}
\newcommand{\pid}{\rho_{2}}
\newcommand{\din}{\textnormal{d}}

\title{Pointwise Estimates Near Singular Sets for Quasilinear Elliptic Equations}

\author{Juan Pablo Alcon Apaza}

\address{Departamento de Matem\'atica, Universidade Federal de São Carlos, 13565-905, São Carlos--SP, Brazil}

\email{juanpabloalconapaza@gmail.com}

\begin{document}

\begin{abstract}
In this work, we study the removability of boundary singular sets for certain
classes of quasilinear elliptic equations in domains $\Omega$ of an
$n$-dimensional Finsler manifold $(\rmf,\fnf,\wghtv)$. We work with
Lipschitz functions $\piu$ and $\pid$ satisfying distance-type properties; in
particular, $\fnf(\cdot,\grad \piu)\le 1$ and $\fnf(\cdot,\grad \pid)\le 1$ a.e. in $\rmf$. The singular set is defined by $\Gamma=\piu^{-1}(\{0\})$. The model problem is
$-\Delta_{p(x)}u+|u|^{q-1}u=0$
in domains of $\mathbb R^n\cong
\mathbb R^d\times\mathbb R^{n-d}
\cong
\piu^{-1}(\{0\})\times\pid^{-1}(\{0\})$,
where $
\piu(x)=|(x_{d+1},\ldots,x_n)|$
 and $\pid(x)=|(x_1,\ldots,x_d)|$.

The main tool in our analysis is the estimate
\[
|u(x)|\leq \mathbf C\,\piu(x)^{-\tau}
\]
near $\Gamma$ for weak solutions
$u\in
W^{1,p(x)}_{\operatorname{loc}}
\bigl(\overline{\Omega}\setminus(\Gamma\cup\Sigma);\wghtv\bigr)
\cap
L^\infty_{\operatorname{loc}}
\bigl(\overline{\Omega}\setminus(\Gamma\cup\Sigma)\bigr)$,
where the constants $\mathbf C>0$ and $\tau>0$ converge to positive values as
$p^+\to1$. This estimate is a key ingredient in proving that the singularity
at $\Gamma$ is removable.

Moreover, in a bounded domain $\Omega$, using this estimate and assuming that,
for every variable exponent satisfying $1<p^-\le p^+<\min\{2,q+1\}$, there exists a weak solution
$u_p\in
W^{1,p(x)}_{\operatorname{loc}}(\Omega;\wghtv)
\cap
L^\infty_{\operatorname{loc}}(\Omega)$
of
\[
-\operatorname{div}
\left(
|\grad u_p|_{\fnf}^{p-2}\grad u_p
\right)
+
|u_p|^{q-1}u_p=0
\quad\text{in }\Omega,
\]
we prove that, for every $\mathscr U\Subset\Omega$, there exists a subsequence
$\{u_{p_m}\}$, with $p_m^+\to1$, that converges to a solution
$
u\in BV(\mathscr U;\wghtv)\cap L^{q+1}(\mathscr U;\wghtv)
$
of
$$
-\Delta_1u+|u|^{q-1}u=0
\quad\text{in }\mathscr U.
$$

\end{abstract}

\let\thefootnote\relax\footnote{2020 \textit{Mathematics Subject Classification}.   35J60; 35J92; 58J05; 35J70}
\let\thefootnote\relax\footnote{\textit{Keywords and phrases}.  Singular sets; removable singularities; quasilinear elliptic equations; asymptotic behavior; 1-Laplacian.}





\maketitle

\tableofcontents
\section{Introduction and main results}
This work is concerned with the problem of removable singularities $\Gamma$ for weak solutions of nonlinear elliptic equations with boundary conditions of the form
\begin{equation} \label{1}
\left\{
\begin{aligned}
-\operatorname{div} \left[  \mathscr{A}_p (\cdot ,\grad u) \right]  + f(\cdot,u)& = 0 & & \text{in } \Omega,\\
g_{\nu}\left(\nu ,  \mathscr{A}_p (\cdot ,\grad u) \right) +  b(\cdot , u)   &= 0 & & \text{on }  \partial \Omega \setminus \Sigma,
\end{aligned}
\right.
\end{equation}
where \(g_{\nu}\bigl(\nu,\mathscr{A}_p(\cdot,\nabla u)\bigr)\)
denotes the conormal derivative associated with the outward unit normal
\(\nu\) to \(\partial\Omega\).

Throughout this work, $\Omega$ denotes a domain in an $n$-dimensional Finsler manifold
$(\rmf,\fnf,\wghtv)$, where $n \ge 2$, while $\Sigma$ is a closed subset of
$\partial\Omega$ satisfying $\Gamma \subset \partial\Omega \setminus {\Sigma}$.

We assume the existence of Lipschitz functions
\[
\piu : \rmf \to [0,\infty), \qquad \pid : \rmf \to [0,\infty),
\]
and of a constant $\rdo \in (0,\tfrac12)$ satisfying
\begin{equation}\label{278}
\left\{\begin{gathered}
 \fnf(\cdot,\grad \rho_i) \le 1 
 \quad \text{a.e. in } \rmf \quad \text{for } i=1,2, \\
 \piu^{-1}(\{t\}) \neq \emptyset
 \quad \forall\, t \in [0,2\rdo],\\
 \piu^{-1}([0,r_1)) \cap \pid^{-1}([0,r_2))
 \neq \emptyset \text{ and is bounded}
 \quad \forall\, (r_1,r_2)\in (0,2\rdo]\times \mathbb{R}^+,\\
 \piu^{-1}(\{0\})=\Gamma, 
 \qquad \sup_{x\in \Omega} \piu(x) <\infty.
\end{gathered} \right.
\end{equation}
Moreover, $\piu$ and $\pid$ satisfy conditions \ref{102}--\ref{104} on page~\pageref{102}.

We also assume that the {\it reversibility constant} of $(\rmf , \fnf)$ is finite; that is,
\begin{equation}\label{276}
\revc_{\fnf} :=\sup _{x \in \rmf} \  \sup _{Y \in T_{x} \rmf  \setminus \{0\}} \frac{\fnf (x, Y)}{\fnf(x,-Y)} <\infty .
\end{equation}

The following examples show that the above assumptions cover both
unbounded cylindrical domains and bounded domains.
\begin{example}$ $
\begin{enumerate}
 \item The model example under consideration is
\begin{gather*}
\Omega = \mathbb{R}^d \times B^{n-d,+}_1(0), 
\qquad 
\Gamma = \mathbb{R}^d \times \{0\}, 
\qquad 
\Sigma = \mathbb{R}^d \times 
\bigl(\partial B^{n-d}_1(0)\cap \{x_n\geq 0\}\bigr),\\
\piu(x)= |x'|_{\mathbb{R}^{n-d}}, 
\qquad 
\pid(x)= |x''|_{\mathbb{R}^{d}},
\end{gather*}
where $x'=(x_{d+1},x_{d+2},\ldots,x_n)$, 
$x''=(x_1,x_2,\ldots,x_d)$, and $|\cdot|_{\mathbb{R}^i}$ denotes the Euclidean norm on $\mathbb{R}^i$. Here
\[
B^{n-d,+}_1(0)
=
\left\{
y=(x_{d+1},\ldots,x_n)\in \mathbb{R}^{n-d}
\mid 
|y|_{\mathbb{R}^{n-d}}<1,\ x_n>0
\right\}.
\]

\item Another example is
\begin{gather*}
\Omega = \mathbb{R}^d \times B^{n-d}_1(e_n), 
\qquad 
\Gamma = \mathbb{R}^d \times \{0\},
\qquad  
\Sigma =\mathbb{R}^d \times 
\bigl(\partial B^{n-d}_1(e_n)\cap \{x_n\geq 1/2\}\bigr),\\
\piu(x)= |x'|_{\mathbb{R}^{n-d}}, 
\qquad 
\pid(x)= |x''|_{\mathbb{R}^{d}} .
\end{gather*}

\item Finally, one may also consider
\begin{gather*}
\Omega = B^{n,+}_1(0) 
\quad \text{or} \quad 
\Omega = B^n_1(0),\\
\pid \equiv 0, 
\qquad 
d=0, 
\qquad 
\piu(x)= |x|_{\mathbb{R}^{n}}.
\end{gather*}
 \end{enumerate}
 \end{example}

The variable exponent \(p : \Omega \to (1,\infty)\) is measurable, and \(\Omega\), \(p\), and \(q\) are assumed to satisfy the conditions stated in \eqref{224}--\eqref{243} and \ref{102}--\ref{106} below. Moreover, the operator $\mathscr{A}_p: T\Omega \to T\Omega$ is assumed to satisfy the following conditions for every function $u: \Omega \to \mathbb{R}$ that is differentiable in the sense of distributions:
\begin{enumerate}[label=($a_{\arabic*}$)]
\item \label{fnf16} $\left[ \mathscr{A}_p(\cdot , \grad u  )  \bullet  \deriv u \right] (x) \geq \Lambda _1 {\big[}  | \grad u |_{\fnf}  (x) {\big]}^{p(x)} $ for a.e. $x \in \Omega$.

\item \label{fnf21} $  |\mathscr{A}_p( \cdot , \grad u ) |_{\fnf} (x )\leq \Lambda _2  {\big[} | \grad u |  _{\fnf}  (x)  {\big]} ^{p(x)-1}$ for a.e. $x \in \Omega$.

\end{enumerate}
Here, we write
$$
\fnf (x , Y  ) = |Y|_{\fnf} (x) \quad \text { and }  \quad \fnfs (x , \omega  ) = |\omega|_{\fnfs} (x),
$$
for $Y\in T_{x} \Omega$ and $\omega \in T^\ast  _{x}\Omega$.

Furthermore, for a function \( f: \Omega \rightarrow \mathbb{R} \) that is differentiable in the distributional sense, we use the notation
$$
Y\bullet \deriv f (x)= \deriv f _{x}(Y) \quad \text { for } \quad Y\in T_{x} \Omega,
$$
where \( \deriv f_{x} \in T_{x}^\ast \Omega \) (or equivalently \( \deriv f(x) \)) denotes the derivative of \( f \) at \( x \).

If we define
\[
\tilde{\mathscr{A}}_p(x,Y) := -\mathscr{A}_p(x,Y),
\]
then, by \eqref{276}, \ref{fnf16}, and \ref{fnf21}, we obtain
\begin{enumerate}[label=($\tilde{a}_{\arabic*}$)]
\item   \label{135} $\left[ \tilde{\mathscr{A}}_p(\cdot , \grad (-v)  )  \bullet  \deriv v \right] (x) \geq \revc_{\fnf}^{-p^+}  \Lambda _1 \big[  | \grad v |_{\fnf}  (x) \big]^{p(x)} $ for a.e. $x \in \Omega$.

\item  \label{136} $  |\tilde{\mathscr{A}}_p( \cdot , \grad (-v) ) |_{\fnf} (x )\leq   \revc_{\fnf}^{p^+}  \Lambda _2 \big[ | \grad v |  _{\fnf}  (x)  {\big]} ^{p(x)-1}$ for a.e. $x \in \Omega$.

\end{enumerate}
Here we use the notation
$$
p^-=\underset{\Omega}{\essinf} \, p, \qquad p^+=\underset{\Omega}{\esssup} \, p.
$$
And we suppose
\begin{equation}\label{224}
1 < p^- \leq  p^+ < n_d \le n,\qquad 0 \le d < n,
\end{equation}
for some $n_d, d \in \mathbb{N}$.

Regarding \(f\) and \(b\), we assume that $f : \Omega \times \mathbb{R} \to \mathbb{R}$ and $b : \partial \Omega \times \mathbb{R} \to \mathbb{R}$ are measurable functions. Moreover, both \(f\) and \(b\) are locally bounded and satisfy the following condition:
\begin{enumerate}[label=($H_{\arabic*}$)]
\item \label{43} $f(x,u)\operatorname{sign}u \ge |u|^{q}$ for a.e. $x\in\Omega$ and all $u\in\mathbb R$, and
$b(x,u)\operatorname{sign}u \ge 0$ for a.e. $x\in\partial\Omega$ and all $u\in\mathbb R$. Here,
\begin{gather}
q - p^+ + 1 > 0, \label{225} \\
n_d > \underset{x\in\Omega}{\operatorname{ess\,sup}}\, \frac{p(x)q}{q-p(x)+1}. \label{243}
\end{gather}
\end{enumerate}

\subsection{Context and Related Work} 
The study of singularities of solutions to partial differential equations is a fundamental topic in elliptic theory and nonlinear potential analysis. In general terms, the removability problem consists of determining whether a solution defined outside a singular set can be extended across that set as a solution of the same equation. Singularities of solutions to second-order quasilinear equations were extensively studied by V\'eron in his monograph \cite{zbMATH00939333}. For results on the removability of singularities of solutions to elliptic equations with absorption terms, see \cite{zbMATH01675810, zbMATH04096004, liskevich2008isolated, skrypnik2005removability}. For the removability of singularities in second-order equations in the anisotropic case, see \cite{zbMATH05135061, zbMATH06673573}. For related problems involving unbounded singular sets, we refer to \cite{Bidaut-Veron2014localand, caffarelli2022singular, fakhi2000new, juutineremovabilitylevel2005}. Moreover, the topic is closely related to the study of the asymptotic behavior of solutions to nonlinear elliptic equations; see, for example, \cite{friedman1986singular, vazquez1985isolated}.


Removability is not determined solely by the size of the singular set, but also by the way in which the solution may blow up near it. In this sense, pointwise estimates of the form
\[
    |u(x)| \lesssim |x-x_0|^{-\tau},
    \qquad \tau >0,
\]
play an important role; see, for example, \cite{zbMATH04096004, friedman1986singular, vazquez1985isolated}. Another condition on the behavior of solutions near a singularity that implies removability can be found in \cite{nicolosi2003precise}:
\[
\limsup_{r\to 0} M(r)\, r^{(n-p)/(p-1)}=0,
\]
where $1<p<n$,
\[
M(r):=\max \left\{|u(x)| \mid r\leq |x-x_0|\leq R_0\right\},
\]
and $0<R_0<\min \{\operatorname{dist}(\{x_0\},\partial\Omega),1\}$.


Let us recall some well-known results concerning isolated singular points for solutions of
\begin{equation}\label{eq:quasilinear}
    -\sum_{j=1}^{n} \bigl( a_j(x,\nabla u) \bigr)_{x_j}
    + \tilde{g}(x,u)
    = 0
    \qquad \text{in } \Omega,
\end{equation}
where $\Omega$ is a bounded domain in $\mathbb{R}^{n}$, the functions $a_j(x,\eta)$, $j=1,\ldots,n$, satisfy the standard ellipticity and growth conditions with respect to $x$ and $\eta$, and $\tilde g$ satisfies suitable structural assumptions. Serrin \cite{serrin1964localbehavior, serrin1965isolated} showed that
\[
    u(x)
    =
    O\!\left(
        |x-x_{0}|^{-\frac{n-p}{p-1}+\delta}
    \right),
    \qquad
    \delta >0,
    \qquad
    1<p<n,
\]
is a sufficient condition for the removability of the singularity at $x_0$. A more precise condition for the removability of isolated singularities can be found in \cite{nicolosi2003precise}; namely,
\[
    \max_{r\leq |x-x_0|\leq R_0} |u|
    =
    o\!\left(
        r^{-\frac{n-p}{p-1}}
    \right),
    \qquad
    1<p<n.
\]
A generalization to the anisotropic case can be found in \cite{zbMATH05135061}.


The removability of singularities of solutions of
\[
    -\Delta u + \tilde g(x,u) = 0
    \qquad \text{in } \Omega
\]
was studied by Br\'ezis and V\'eron \cite{brezis1980removable}. They proved the removability of the isolated singularity $x_0=0$ for problems satisfying the growth condition
\[
    \tilde g(x,u)\operatorname{sign}u \geq |u|^{q},
    \qquad
    q \geq \frac{n}{n-2},
    \qquad
    n \geq 3.
\]
Further developments in this direction can be found in \cite{cianci2010removability}.


For \eqref{eq:quasilinear}, Skrypnik \cite{skrypnik2005removability} proved the removability of the singular point for equations satisfying the conditions
\[
    \tilde g(x,u)\operatorname{sign}u \geq |u|^{q},
    \qquad
    1<p<n,
    \qquad
    q \geq \frac{n(p-1)}{n-p}.
\]
Moreover, he showed the following behavior near the singular set:
\[
|u(x)| \lesssim |x|^{-\frac{p}{q-p+1}}.
\]

Liskevich and Skrypnik \cite{liskevich2008isolated} established the removability of the isolated singular point $x_0=0$ for \eqref{eq:quasilinear} when
\[
    \tilde g(x,u) \operatorname{sign} u \approx |x|^{-\alpha} |u|^q,
\]
provided that
\[
    q \geq \frac{(p-1)(n-\alpha)}{n-p},
    \qquad
    \alpha < p,
    \qquad
    1<p<n.
\]
Moreover, they provided the following estimate:
\[
|u(x)| \lesssim |x|^{-\frac{p-\alpha}{q-p+1}}.
\]

Weighted and degenerate variants of this problem have also been studied. When
\[
    \sum_{j=1}^{n} a_j(x,\eta)\eta_j
    \gtrsim
     w_1(x)|\eta|^{p},
    \qquad
    |a_j(x,\eta)|
    \lesssim
    w_1(x)|\eta|^{p-1},
\]
and
\[
    \tilde g(x,u)\operatorname{sign}u
    \geq
    w_0(x)|u|^{q},
\]
Mamedov and Harman \cite{mamedov2009removability} proved that an isolated singular point $x_0$ is removable for solutions of \eqref{eq:quasilinear} if
\[
    w_0(B(x_0,\epsilon))
    \left(
        \frac{
            w_1(B(x_0,\epsilon))
        }{
            \epsilon^{p} w_0(B(x_0,\epsilon))
        }
    \right)^{\frac{q}{q-p+1}}
    =
    o(1)
    \qquad \text{as } \epsilon \to 0,
\]
with $p>1$ and $q>p-1$.

Another important direction concerns problems with variable exponents. In the framework of Sobolev spaces with variable exponents, Fu and Shan \cite{fu2016removability} obtained the following behavior near the singular point $x_0$:
\[
\sup_{x\in B(x_0,r/2)} |u(x)| \lesssim r^{-\tau},
\qquad
0<r<\delta,
\]
where
\[
\tau > \frac{p^+_{B(x_0,\delta)}}{q_{B(x_0,\delta)} - p^+_{B(x_0,\delta)} +1}.
\]
Moreover, they obtained sufficient conditions for the removability of isolated singular points for elliptic equations with $p(x)$-type growth. More precisely, if
\[
    \sum_{j=1}^{n} a_j(x,\eta)\eta_j
    \gtrsim
     |\eta|^{p(x)},
    \qquad
    |a_j(x,\eta)|
    \lesssim
      |\eta|^{p(x)-1},
\]
and
\[
    \tilde g(x,u)\operatorname{sign}u
    \geq
    |u|^{q},
\]
where $p,q:\overline{\Omega}\to\mathbb{R}$ are continuous functions, then an isolated point $x_0$ is removable provided that
\[
    1 < \inf_{\overline{\Omega}} p
    \leq
    \sup_{\overline{\Omega}} p
    < n,
    \qquad
    \inf_{\overline{\Omega}}(q-p+1) > 0,
\]
and
\[
    \sup_{\overline{\Omega}}
    \frac{pq}{q-p+1}
    < n.
\]

A different approach, for unbounded singular sets, is given by equations with absorption or source terms and measure data:
\begin{equation}\label{eq:measure-data}
    -\Delta_p u + |u|^{q-1}u = \mu,
\end{equation}
in a domain $\Omega \subset\mathbb{R}^{n}$, where $1<p<n$, $q>p-1$, and $\mu$ is a Radon measure in $\Omega$. In \cite{Bidaut-Veron2003removsinquasi}, the removability result is expressed in terms of the Bessel capacity condition
\[
    \operatorname{cap}_{1,s}( \mathcal{S},\mathbb{R}^{n}) = 0,
    \qquad
    s > \frac{pq}{q-p+1},
\]
where $\mathcal{S}$ is a relatively closed subset of $\Omega$. In this context, the problem consists of determining whether a local entropy solution of \eqref{eq:measure-data} in $\Omega\setminus\mathcal S$ extends as a local entropy solution to the whole domain. In the semilinear case $p=2$, Baras and Pierre \cite{barras1984singu} showed that the removable sets are precisely those sets $\mathcal{S}$ satisfying
\[
    \operatorname{cap}_{2,q/(q-1)}(\mathcal{S},\mathbb{R}^{n}) = 0.
\]
For nonlinear boundary conditions,
\[
\left\{
\begin{aligned}
    -\Delta_p u &= 0
    &&\text{in } \mathbb{R}^{n}_{+},\\
    |\nabla u|^{p-2}\frac{\partial u}{\partial \nu}
    &= u^{q}
    &&\text{on } \partial\mathbb{R}^{n}_{+},
\end{aligned}
\right.
\]
Aguirre \cite{aguirre2019harmonic} characterized removable compact sets $K\subset\partial\mathbb{R}^{n}_{+}$ by means of Riesz capacities, showing that they are precisely the sets for which
\[
    \operatorname{cap}_{I_{p-1},\,q/(q-p+1),\,n-1}(K) = 0,
\]
when
\[
    1<p<n,
    \qquad
    q > \frac{(n-1)(p-1)}{n-p}.
\]
\medskip


\noindent {\it The limit problem as $p\to 1$.} The limiting problems associated with the $p$-Laplacian have been investigated by several authors; see, for instance, \cite{4AndreuBall2001, 6Andreaselles2004, 9Bellvaga2005, 11Demengel2002, 12Demengel2004, zbMATH02228525, zbMATH05351718, zbMATH07786723} and the references therein. A further source of motivation for this problem comes from the variational approach to image restoration introduced by Rudin, Osher, and Fatemi. Interest in the study of the behaviour of $u_p$ also arises in optimal design problems in torsion theory and in related geometric questions; see \cite{16Kawohl1990, 17Kawohl1991}. 

A natural approach to the analysis of solutions to
\begin{equation}\label{ptou2}
-\operatorname{div}\left(\frac{Du}{|Du|}\right)=f
\quad \text{in } \Omega,
\end{equation}
subject to homogeneous Dirichlet boundary conditions, is to study the asymptotic behaviour, as $p\to1$, of the solutions $u_p$ to
\[
\left\{\begin{aligned}
-\operatorname{div}\left(\left|\nabla u_p\right|^{p-2}\nabla u_p\right)=&f & & \text{in } \Omega,\\[4pt]
u_p=&0 & & \text{on } \partial\Omega,
\end{aligned}\right.
\]
where $p>1$ and $\Omega$ is a bounded open subset of $\mathbb{R}^N$, with $N\ge 2$, and Lipschitz boundary. See \cite{zbMATH05346734, 10Cicabetti2003} for the case in which the datum $f$ belongs to the Lorentz space $L^{N,\infty}(\Omega)$. Solutions to \eqref{ptou2} belong to the space $BV(\Omega)$, which ensures that the distributional gradient is well defined as a Radon measure.

The dependence of the eigenvalues of $-\Delta_p$ on $p$ has also attracted considerable attention in the literature. This line of investigation goes back at least to \cite{17delPinoManasevich1991}, where the continuity with respect to $p$ of the first Dirichlet eigenvalue $\lambda_1^{\mathcal D}$ was established. As for higher Dirichlet eigenvalues, the continuity of $\lambda_2^{\mathcal D}$ is stated in \cite{25Huang1997}, while the continuity of the full sequence $\lambda_n^{\mathcal D}$ with respect to $p$ is discussed in detail in \cite{11ChampionDePascale2007}; see also \cite{14DegiovanniMarzocchi2014, 40Parini2011}.

The analysis of the limit $\lambda_1^{\mathcal D}\to\bar{\lambda}_1$ of the first Dirichlet eigenvalue as $p\to1$, and of the role of $\bar{\lambda}_1$ as the first eigenvalue of the $1$-Laplacian operator $-\Delta_1$, can be found in \cite{19Demengel2002, 12Demengel2004, 28KawohlFridman2003, 29Kraiem2007}, with preliminary approximation results in \cite{25Huang1997, 31LeftonWei1997}. The problem of higher Dirichlet eigenvalues is addressed in \cite{34LittigSchuricht2014, 38Parini2009, 39Parini2010}. For the Robin problem, we refer to \cite{zbMATH07882872, zbMATH07751769}.

\subsection{Main results}  $ $

\noindent {\it Removable singularities.} We will follow the techniques developed in \cite{mamedov2009removability}, which were later used to obtain results in Sobolev spaces with variable exponent in \cite{fu2016removability}, as well as for unbounded singular sets in \cite{zbMATH07850623}. In this work, we explore these ideas in the context of domains in Finsler manifolds.

One of the main objectives is to prove the estimate, stated in Proposition \ref{266}, near the singular set $\Gamma = \piu^{-1}(\{0\})$:
$$
|u(x)| \leq {\bf C} \piu(x)^{-\tau}.
$$
In addition, we analyze the behavior of the constants ${\bf C}$ and $\tau$ as $p^+ \to 1$.

As we will see in Section \ref{32}, inequality \eqref{239}, the main tool for obtaining this estimate is a Gagliardo--Nirenberg--Sobolev type inequality:
\begin{equation*}
\left(\int_{\Omega} |u|^{\gaglia}\,\din \wghtv \right)^{\frac{1}{\gaglia}}
\le \consgaglia
\left(\int_{\Omega} |\grad u|_{\fnf}^{p^-}\, \din \wghtv \right)^{\frac{1}{p^-}} .
\end{equation*}
This inequality will be assumed as part of the hypotheses in \ref{106}. Since it applies to compactly supported functions, in the case where $\pid \not\equiv 0$, the decay condition imposed in \ref{102},
$$
u^{\pm}(x)\leq \ctb_0 m^{\pm}_{r_1,r_2} \left(1 + \pid(x) \right)^{-\boldsymbol{\delta}},
$$
will allow us to construct a compactly supported test function; see \eqref{300}.

Moreover, in Section \ref{32}, we will work mainly with subsolutions of the operator $\mathscr{A}_p$; see Lemma \ref{219}. An important point is that, in general, one does not have $\grad(-u) = -\grad u$. Therefore, we cannot assume that
$$
\mathscr{A}_p(x,\grad(-u)) = -\mathscr{A}_p(x,\grad u).
$$
This difficulty can be overcome by assuming that the reversibility constant of $(\rmf,\fnf)$ is finite. Under this hypothesis, we can work with the operator
$$
\widetilde{\mathscr{A}}_p(x,Y)= -\mathscr{A}_p(x,Y),
$$
in order to handle supersolutions; see \ref{135}, \ref{136}, and Corollary \ref{277}.

In Section \ref{158}, the estimate $|u(x)| \leq {\bf C} \piu(x)^{-\tau}$ will allow us to prove that
$$
(1+\pid)^{\boldsymbol{\delta}} u \in L^{\infty} _{\loc}(\overline \Omega \setminus \Sigma).
$$
Then, in Section \ref{33}, inequality \eqref{246}, this assertion will allow us to show that
$$
\int_{\Omega \cap \{\piu \leq 2r\}} |\grad u|^p_{\fnf}
(1+\pid)^{\boldsymbol{\delta} p^+ /2} \, \din \wghtv < \infty,
$$
and consequently that $|\grad u|_{\fnf} \in L^{p(x)} _{\loc}(\overline \Omega \setminus \Sigma ;\wghtv)$. With these tools, we will be able to remove the singularity $\Gamma$.

We work with the following spaces
\begin{multline*}
L_{\operatorname{loc} }^{p (x) }(\overline{\Omega} \setminus ( \Gamma \cup \Sigma ); \wghtv ) =  \Bigl\{ u : \Omega \rightarrow \mathbb{R} \mid u \in L^{p (x) }(U ; \wghtv  ) 
\text {  for all open subset }  U \subset \Omega \Bigr. \\
 \Bigl. \text { so that }  \overline{U} \cap ( \Gamma \cup \Sigma )=\emptyset \Bigr\}
\end{multline*}
and 
\begin{multline*}
W_{\operatorname{loc} }^{1, p(x)  }(\overline{\Omega} \setminus ( \Gamma \cup \Sigma ); \wghtv )=\Bigl\{u : \Omega \rightarrow \mathbb{R} \mid u \in W^{1, p(x)  }(U  ; \wghtv ) 
\text { for all open subset } U \subset \Omega  \Bigr.\\
 \Bigl. \text { so that }   \overline{U} \cap ( \Gamma \cup \Sigma )=\emptyset\Bigr\}.
\end{multline*}

Similarly, we define $L_{\operatorname{loc} }^{\infty}(\overline{\Omega} \setminus ( \Gamma \cup \Sigma ) )$. Let us observe that the trace of $u \in W_{\operatorname{loc} }^{1, p (x) }(\overline{\Omega} \setminus ( \Gamma \cup \Sigma ); \wghtv ) \cap L_{\operatorname{loc}  }^{\infty}(\overline{\Omega}   \setminus ( \Gamma \cup \Sigma ))$ may not be defined on $\partial \Omega  $; however, it is defined on $\{x \in \partial \Omega   \mid \piu (x)  >r\}$ and is essentially bounded for small enough values $r>0$.

\begin{definition}\label{298}
We say that $u \in W_{\operatorname{loc}}^{1,p(x)}(\overline{\Omega} \setminus ( \Gamma \cup \Sigma ); \wghtv)\cap L_{\operatorname{loc}}^{\infty}(\overline{\Omega}\setminus (\Gamma \cup \Sigma) )$ is a weak solution of \eqref{1} in $\overline{\Omega}\setminus (\Gamma \cup \Sigma)$ if
\begin{equation} \label{101}
\begin{aligned}
\int_{\Omega} \left(  \mathscr{A}_p ( \cdot , \grad u) \bullet \deriv \zeta     
  +    f(\cdot , u) \zeta \right) \, \din \wghtv  + \int_{\partial \Omega} b(\cdot , u)   \zeta \, \ds = 0,
\end{aligned}
\end{equation}
holds for all $\zeta \in W_{\operatorname{loc}}^{1,p(x)}(\overline{\Omega} \setminus ( \Gamma \cup \Sigma ); \wghtv )\cap L_{\operatorname{loc}}^{\infty}(\overline{\Omega}\setminus ( \Gamma \cup \Sigma ) )$ such that $\operatorname{supp}\zeta \subset \overline{\Omega}\setminus ( \Gamma\cup \Sigma)$.
\end{definition}
\begin{definition} \label{299}
 We say that a weak solution $u$ of \eqref{1} in $\overline{\Omega}\setminus (\Gamma \cup \Sigma)$ has a removable singularity at $\Gamma$ if $u \in W^{1,p(x)} _{\loc}(\overline \Omega \setminus \Sigma ; \wghtv )\cap L^{\infty}_{\loc}(\overline \Omega \setminus \Sigma  )$ and \eqref{101} holds for all $\zeta \in W^{1,p(x)} _{\loc }(\overline \Omega \setminus \Sigma ; \wghtv )\cap L^{\infty} _{\loc}(\overline \Omega \setminus \Sigma)$ such that $\spt \zeta\subset \overline \Omega \setminus \Sigma$.
 \end{definition}

We work under the following assumptions.

\medskip

\begin{enumerate}[label=($A_{\arabic*}$)]
\item \label{102} For a weak solution $u$ to \eqref{1}, we suppose that there exist $\boldsymbol{\delta} >d$ and $\ctb _0>0$ such that, for all $r_1, r_2 \in (0,1)$ with $r_1<r_2$, we have
$$
u^{\pm}(x)\leq \ctb _0 m^{\pm}_{r_1,r_2} \left(1 + \pid (x) \right)^{-\boldsymbol{\delta}} \quad \text { a.e. in } \quad \left\{x\in \Omega \mid r_1< \piu (x)  <r_2 \right\},
$$
where $m^{\pm}_{r_1,r_2}=\operatorname{ess} \operatorname{sup} \{u^{\pm}(x)\mid x\in \Omega, \,  r_1< \piu (x) <r_2\}$, $u^+ = \max\{u,0\}$, and $u^- = -\min\{u,0\}$.

 \item \label{104} Suppose there exist constants  $\ctb_1, \ctb_2>1$ such that, for all $0<r\le 1$,
$$
\int_{\{x\in\Omega \mid 0<\piu(x)<r\}} (1+\pid(x))^{-\tfrac{\boldsymbol{\delta} }{2}\min \{(p^- -1)p^- ,1 \}} \, \din \wghtv
\le \ctb_1 r^{n_d},
$$
and for all $r_1\in(0,1]$, $r_2>0$, and $0\le \alpha<n_d$,
\begin{gather*}
\int _{\{ x \in \Omega \mid 0< \piu (x) <r_1, \  \pid (x) <r_2 \}}  \piu  ^{-\alpha} \, \din \wghtv \leq  \frac{\ctb _2 }{n_d -\alpha} r_1^{n_d-\alpha} r_2 ^d .
 \end{gather*}
 
 \item \label{106} 

There exist constants   $\gaglia$ and $\consgaglia>0$ such that $\overline{\mathcal U}\setminus\partial\Omega\subset\Omega$, where
\[
\mathcal U:=\bigl\{x\in\Omega \mid  \piu (x)<2\rdo\bigr\},
\qquad
p^-<\gaglia\le \frac{n p^-}{n-p^-}.
\]
Moreover, for every $v\in W^{1,p^-}(\Omega ; \wghtv )$ with compact support in $\overline{\mathcal U}\setminus(\partial\mathcal U\cap\Omega)$, we have
\begin{equation}\label{107}
\left(\int_{\Omega} |v|^{\gaglia}\,\din \wghtv \right)^{\frac{1}{\gaglia}}
\le \consgaglia
\left(\int_{\Omega} |\grad v|_{\fnf}^{p^-}\, \din \wghtv \right)^{\frac{1}{p^-}}.
\end{equation}

\end{enumerate}

\medskip

\begin{remark}

 In the case $\Omega =   \mathbb{R}^d \times B_1^{n-d,+}(0)$, the proof of \eqref{107} follow from Gagliardo-Nirenberg-Sobolev inequality in  \cite{zbMATH05681750}, by extending function $u : \Omega \to \mathbb{R}$ to 
\begin{equation*}
\hat u (x) = \left\{ \begin{aligned}
& u(x) & & \text { if } x \in \Omega,\\
& u((x_1, \ldots , x_{n-1} , -x_n)) & & \text { if } x \in\mathbb{R}^d \times B_1^{n-d}(0) \setminus   \Omega .
\end{aligned}\right.
\end{equation*}

We also have
$$
\gaglia = \frac{np^-}{n-p^-}, \qquad  \consgaglia = \frac{2^{\frac{1}{n}}p^-(n-1)}{n-p^-}.
$$


\end{remark}


Our first main result is  stated as follows:
\begin{theorem}\label{23}
Under the assumptions \ref{fnf16}, \ref{fnf21}, \ref{43}, \eqref{224}--\eqref{243}, and \ref{102}--\ref{106}, if 
$u\in W^{1,p(x)} _{\operatorname{loc}}(\overline{\Omega} \setminus ( \Gamma \cup \Sigma ); \wghtv ) \cap L^\infty _{\operatorname{loc}} (\overline{\Omega} \setminus (\Gamma \cup \Sigma) )$
is a weak solution of \eqref{1} in $\overline{\Omega} \setminus (\Gamma \cup \Sigma)$, then the singularity of $u$ at $\Gamma$ is removable.
\end{theorem}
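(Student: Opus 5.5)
The plan follows the three-step scheme outlined in the introduction: a pointwise bound, then an energy bound, then a cutoff argument.

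\emph{Step 1: pointwise bound.} First I would establish the pointwise estimate $|u(x)|\le \mathbf C\,\piu(x)^{-\tau}$ near $\Gamma$, with $\tau$ chosen so that
\[
\frac{p^+}{q-p^++1}<\tau<\frac{n_d-p^+}{p^+-1}\cdot(\text{something compatible with }\eqref{243}).
\]
The hypothesis \eqref{243} is exactly what makes such a window nonempty. To prove it, I would run a De Giorgi/Moser-type iteration on the annular regions $\{r_1<\piu<r_2\}$. The steps are:
\begin{itemize}[label={}]
\item test the equation with $(u-k)^+\eta^{p^+}$, where $\eta$ is a cutoff in $\piu$ (admissible, since $\fnf(\cdot,\grad\piu)\le1$);
\item use the decay hypothesis \ref{102} to make the test function compactly supported in the $\pid$-direction;
\item use the absorption term $|u|^q$ from \ref{43} to control the level sets;
\item apply the Sobolev inequality \eqref{107} from \ref{106} together with the measure bound \ref{104} to obtain the geometric decay of the truncated energies.
\end{itemize}
The absorption yields the Keller--Osserman scaling $m_{r}\lesssim r^{-p^+/(q-p^++1)}$. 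Supersolutions are treated identically through $\tilde{\mathscr A}_p$, using \ref{135}--\ref{136}, which handles $u^-$. Combined with \ref{102}, this gives $(1+\pid)^{\boldsymbol\delta}u\in L^\infty_{\loc}(\overline\Omega\setminus\Sigma)$ away from $\Gamma$ and the $\piu^{-\tau}$ bound near it.

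\emph{Step 2: energy bound.} Next I would test \eqref{101} with $\zeta=u\,\psi_r^{p^+}(1+\pid)^{\boldsymbol\delta p^+/2}\xi$. Here $\psi_r$ is a Lipschitz function of $\piu$ vanishing on $\{\piu<r\}$, equal to $1$ on $\{\piu>2r\}$, with $|\grad\psi_r|_{\fnf}\le 1/r$, and $\xi$ localizes away from $\Sigma$. By \ref{fnf16}, \ref{fnf21}, Young's inequality with variable exponents, and the sign conditions on $f$ and $b$, the gradient energy is bounded by terms of the form $\int_{\{r<\piu<2r\}}|u|^{p}r^{-p}(1+\pid)^{\cdots}\,\din\wghtv$. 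Inserting the Step 1 bound and \ref{104}, these terms are $O(r^{n_d-p^+-\tau p^+})$ up to harmless factors. I would then sum over dyadic shells, or use the weighted integral in \ref{104} with $\alpha=p(\tau+1)<n_d$, to obtain
\[
\int_{\Omega\cap\{\piu\le 2r\}}|\grad u|_{\fnf}^{p}(1+\pid)^{\boldsymbol\delta p^+/2}\,\din\wghtv<\infty .
\]
Hence $u\in W^{1,p(x)}_{\loc}(\overline\Omega\setminus\Sigma;\wghtv)$; the $L^\infty_{\loc}$ part requires some care, as discussed below.

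\emph{Step 3: removability.} For an admissible $\zeta$ as in Definition \ref{299}, I would apply \eqref{101} to $\zeta\psi_r$ and let $r\to0$. The bulk terms converge by dominated convergence: $f(\cdot,u)\zeta$ is integrable because the $u^{q+1}$ bound from testing is also finite. The boundary term converges by monotone convergence thanks to the sign of $b$. The error term $\int \zeta\,\mathscr A_p\bullet\deriv\psi_r$ is bounded by
\[
\Lambda_2\|\zeta\|_\infty\, r^{-1}\int_{\{r<\piu<2r\}}|\grad u|^{p-1}\,\din\wghtv .
\]
By H\"older's inequality, Step 2 and \ref{104}, this is $\lesssim \bigl(\int_{\text{shell}}|\grad u|^p\bigr)^{(p-1)/p}r^{n_d/p-1}$, which tends to $0$ since $p^+<n_d$ and the shell energy tends to $0$.

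\emph{Main obstacle.} I expect the main obstacle to be Step 1, specifically keeping exact track of the constants through the iteration with a variable exponent and a non-reversible Finsler norm. The iteration mixes $p^-$ (used in \eqref{107}) with $p^+$ (from homogeneity), and $\tau$ must stay inside the window permitted by \eqref{243}. Beyond Step 1, a separate point needs resolving: showing $u\in L^\infty_{\loc}$ up to $\Gamma$ instead of merely $O(\piu^{-\tau})$. For this I would first try a second, now local, Moser iteration near $\Gamma$, carried out once Step 3 has shown that $u$ is a genuine weak subsolution across $\Gamma$.
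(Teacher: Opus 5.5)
There is a genuine gap. Your Step~2 does not work under hypothesis \eqref{243}.

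\textbf{Why Step 2 fails.} Inserting $|u|\le\mathbf C\,\piu^{-\tau}$ into the Caccioppoli bound gives shell energies
$\int_{\{r\le\piu\le 2r\}}|\grad u|_{\fnf}^{p}\,\din\wghtv\lesssim r^{n_d-p^+-\tau p^+}$.
Summing over dyadic shells, or using \ref{104} with exponent $p^+(\tau+1)$, therefore needs $n_d>p^+(1+\tau)$. Since your $\tau$ satisfies $\tau>p^+/(q-p^++1)$, this forces $n_d>p^+(q+1)/(q-p^++1)$, which is strictly stronger than \eqref{243}.

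The window $p^+/(q-p^++1)<\tau<(n_d-p^+)/(p^+-1)$ that you invoke is indeed equivalent to \eqref{243} for constant $p$. It is Serrin's threshold. But your energy step needs the smaller threshold $\tau<n_d/p^+-1$.

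A concrete case: $p\equiv2$, $q=4$, $n_d=3$ satisfies \eqref{243}, since $8/3<3$. Yet $\tau\ge 2/3$, so your shell bound is of order $r^{-1/3}$ and the dyadic sum diverges. For $p\equiv2$ the failing range is $n_d/(n_d-2)<q\le (n_d+2)/(n_d-2)$, between the Br\'ezis--V\'eron and Sobolev exponents. Your $L^\infty_{\loc}$ claim is postponed until after Step~3, and Step~3 rests on Step~2, so nothing in the proposal covers this range.

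\textbf{The missing idea.} You need to prove that $u$ is bounded near $\Gamma$ \emph{before} any energy estimate, using the pointwise bound only logarithmically. This is Proposition~\ref{267}, and it is where \eqref{243} is actually consumed.
\begin{itemize}
\item[(a)] Test \eqref{222} with $\zeta=\ln\max\{u/\Lambda(\sigma),1\}\,(\psi_r\circ\piu)^{\alpha}$. Here $\alpha=\operatorname{ess\,sup}\,pq/(q-p+1)$, and $\psi_r$ is a logarithmic cutoff rising from $0$ to $1$ on $[r,\sqrt r]$ with $|\grad(\psi_r\circ\piu)|_{\fnf}\le 2/(\piu\ln(1/r))$.
\item[(b)] Young's inequality with exponents $q/(p-1)$ and $q/(q-p+1)$ absorbs the factor $u^{p-1}$ of the cross term into $u^q\ln(u/\Lambda(\sigma))$. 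What remains is $(\ln(1/r))^{-\alpha}\piu^{-\alpha}$ times a power of $\ln u$.
\item[(c)] The estimate $|u|\le\mathbf C\piu^{-\tau}$ is used only to replace $\ln u$ by $\ln(1/\piu)$, so any polynomial $\tau$ is harmless.
\item[(d)] Then \ref{104} with this $\alpha<n_d$ gives $\int_{\{r\le\piu\le\sqrt r\}}\piu^{-\alpha}\,\din\wghtv\lesssim r^{(n_d-\alpha)/2}$. Hence the right side tends to $0$ and $u\le\Lambda(\sigma)$ near $\Gamma$.
\end{itemize}

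Once you have $(1+\pid)^{\boldsymbol\delta}u\in L^\infty_{\loc}(\overline\Omega\setminus\Sigma)$ (via \ref{102}, with the same argument for $-u$), your Step~2 test function yields shell energies $O(r^{n_d-p^+})$. These are summable because $p^+<n_d$. This gives both the $W^{1,p(x)}_{\loc}$ and the $L^\infty_{\loc}$ membership required in Definition~\ref{299} at once, with no second Moser iteration. Your Step~3 then goes through essentially as you describe.
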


The following result follows from Lemma~\ref{219} and Corollary~\ref{277} by taking $\ell = \tfrac{4}{3}r$ and $r=\piu (x)$.
\begin{proposition} \label{266}
Under the assumptions \ref{fnf16}, \ref{fnf21}, \ref{43}, \eqref{224}, \eqref{225}, \ref{102}--\ref{106}. Suppose that $u \in W_{\operatorname{loc}  }^{1, p (x) }(\overline{\Omega} \setminus ( \Gamma \cup \Sigma ); \wghtv )\cap L_{\operatorname{loc}  }^{\infty}(\overline \Omega  \setminus (\Gamma \cup \Sigma))$ is a solution of \eqref{1} in $\overline \Omega  \setminus (\Gamma \cup \Sigma)$. Then,  
\begin{equation}\label{275}
|u(x)| \leq {\bf C}  \piu (x)    ^{-\tau}, \quad \text { \textit{a.e.} in } \quad  \left\{x \in \Omega   \mid 0< \piu (x)    <\frac{3\rdo}{4} \right\}  ,
\end{equation}
where 
\begin{align*}
\tau =  \displaystyle \frac{1}{   C_1}\left[  p^+  - \frac{\epsilon n_d(\gaglia -p^{-})}{\gaglia }  \right] \geq \frac{p^+}{q - p^+ +1},
\end{align*}
with $ C_1 = (  q      - p^+   +1)-(  q      - p^-   +1)\epsilon$ and $\epsilon \in ( 0, \tfrac{q      - p^+   +1}{q      - p^-   +1})$. Also,
$$
\begin{aligned}
{\bf  C }={\bf c_1} ^{\frac{8(\beta + q)  n}{\min \{   C_1 , 1\} ^2}  } \left[   C _2  (\consgaglia +1)^{n}\frac{\gaglia+p^{-} (\gaglia -1) \epsilon + \gaglia (1-\epsilon)q  }{( \gaglia - p^- )\epsilon} \right]^{\frac{2\beta}{  C_1} },
\end{aligned}
$$
 with $\beta   =  1 +p^- \frac{\gaglia -  1}{\gaglia   }\epsilon $ and $  C_2 =  C_2 (\Lambda _1 , \Lambda _2 , \ctb _1 , \revc _{\fnf} , n_d , n)>1$, where \({\bf c_1}\) is given in Lemma \ref{64}.
\end{proposition}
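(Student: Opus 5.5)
The estimate will come from a De Giorgi/Moser-type iteration for $u^+$ on annular regions $\{r<\piu<\ell\}$, using the absorption term to produce a bound independent of the boundary values. The supersolution part ($u^-$) is then handled by the same argument applied to $-u$ with the operator $\tilde{\mathscr A}_p$. As indicated before the statement, the natural route is to take for granted the local sup-estimate of Lemma~\ref{219} (subsolutions) and its counterpart Corollary~\ref{277} (supersolutions, via \ref{135}--\ref{136} and the finiteness of $\revc_{\fnf}$). Then we specialize the radii: $\ell=\tfrac43 r$ and $r=\piu(x)$.

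Concretely, I expect Lemma~\ref{219} to give, for $0<r<\ell\le \rdo$ (or the analogous range) with the annulus well inside $\mathcal U$, a bound
\[
\esssup_{\{x\in\Omega\,\mid\, \piu(x)\in (r,\ell)\}} u^+ \le \mathbf C_0\,(\ell-r)^{-\tau}
\]
(possibly with a power of $\ell$ as well). This bound is obtained by testing \eqref{101} with $\zeta=(u-k)_+^{\beta}\eta^{s}$. Here $\eta$ is a cutoff in $\piu$, of Lipschitz size $(\ell-r)^{-1}$ since $\fnf(\cdot,\grad\piu)\le1$, multiplied by a decay cutoff in $\pid$, which is legitimate because of \ref{102}; this is the construction of \eqref{300}. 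One then uses \ref{fnf16}, \ref{fnf21}, the sign condition \ref{43} (the boundary integral has the right sign and $f$ gives $|u|^q$), and the Gagliardo--Nirenberg--Sobolev inequality \eqref{107}. Finally the measure bounds from \ref{104} produce the factor $r^{n_d}$. Iterating over levels $k_j$ via Lemma~\ref{64} gives the constant ${\bf c_1}^{8(\beta+q)n/\min\{C_1,1\}^2}$. The interplay of $q$ with $p^\pm$ through Young's inequality yields the exponent $\tau=C_1^{-1}[p^+-\epsilon n_d(\gaglia-p^-)/\gaglia]$.

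With this, fix a Lebesgue point $x$ with $0<\piu(x)<3\rdo/4$ and set $r=\piu(x)$. Then apply the lemma on a slightly shifted annulus containing the level $\piu=r$, say $\{\tfrac23 r<\piu<\tfrac43 r\}$ or with $\ell=\tfrac43 r$ as prescribed. This keeps $\ell<\rdo$, so the annulus stays inside $\mathcal U$ and away from $\Sigma$ by \ref{106}. Then $\ell-r\simeq r/3$, so the bound becomes $u^+(x)\le \mathbf C_0 3^{\tau} r^{-\tau}$, and the factor $3^\tau$ is absorbed into $\mathbf C$. Doing the same with Corollary~\ref{277} for $u^-$ gives \eqref{275}.

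It remains to check the inequality $\tau\ge p^+/(q-p^++1)$. The proposition claims this for $\epsilon$ in the stated range, so I would verify the algebra $\bigl[p^+-\epsilon n_d(\gaglia-p^-)/\gaglia\bigr](q-p^++1)\ge p^+\bigl[(q-p^++1)-(q-p^-+1)\epsilon\bigr]$. This reduces to $p^+(q-p^-+1)\ge n_d(q-p^++1)(\gaglia-p^-)/\gaglia$. I expect that to follow from the relation between $\gaglia$ and $n$ in \ref{106}, or else to require the specific choice of $\gaglia$. This step is the main obstacle. If the inequality is not automatic, I would revisit how $\epsilon$ enters, since the sign of the $\epsilon$-term is what must be controlled. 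The remaining care point is making sure the annulus radii meet the hypotheses of Lemma~\ref{219} (e.g.\ $\ell\le\rdo$ and $r_1<r_2<1$ in \ref{102}), which holds since $\tfrac43\cdot\tfrac34\rdo=\rdo<\tfrac12$.
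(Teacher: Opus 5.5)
Your proposal is correct and follows the paper's route: the proposition is Lemma~\ref{219} applied to $u$ and Corollary~\ref{277} applied to $-u$, with $\ell=\tfrac43 r$ and $r=\piu(x)$. One detail differs from what you expected of the lemma: it actually gives $u^+\le C_0 r^{-\tau}$ on $\vltr_{\ell,r/2}$, and this set already contains the level $\{\piu=r\}$ because $|r-\tfrac43 r|<\tfrac r2$, so no factor $3^{\tau}$ has to be absorbed (your sketch of the lemma's internals also differs from the paper's, but you never need it).

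The step you flag as the main obstacle is immediate from \ref{106}: $\gaglia\le np^-/(n-p^-)$ gives $(\gaglia-p^-)/\gaglia\le p^-/n$, hence $n_d(q-p^++1)(\gaglia-p^-)/\gaglia\le p^-(q-p^++1)\le p^+(q-p^-+1)$, where the last inequality is just $(p^+-p^-)(q+1)\ge 0$.
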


Following the same techniques used in Theorem~\ref{23} and Proposition~\ref{266}, we
obtain analogous results when the singular set lies in the interior of the domain.
\begin{corollary} 
[Interior singularities]  \label{61}  Assume that
 $$\Gamma = \piu ^{-1} (\{0\}) \subset \Omega$$ 
 and 
 $$\mathcal{U} =\{x\in \Omega \mid \piu (x)< 2\rdo\} \Subset \Omega.
 $$
   Under the assumptions \ref{fnf16}, \ref{fnf21}, \ref{43}, \eqref{224}--\eqref{243}, \ref{102}--\ref{106}, suppose that  $u \in W_{\operatorname{loc}  }^{1, p(x)  }(\Omega   \setminus \Gamma ; \wghtv ) \cap L_{\operatorname{loc}  }^{\infty}(\Omega  \setminus \Gamma)$ is  a weak solution  in $\Omega   \setminus \Gamma$ of the equation
\begin{equation}\label{271}
- \operatorname{div}[\mathscr{A}_p(\cdot , \grad u )]  +   f(\cdot , u) + \widehat b(\cdot , u) = 0 \quad \text { in } \Omega \setminus \Gamma.
\end{equation} 
Here $\widehat b:\Omega\times \mathbb{R}\to \mathbb{R}$ is measurable and locally bounded, and satisfies $\widehat b(x,u)\operatorname{sign}u\geq 0$ for a.e. $x\in \Omega$ and all $u\in \mathbb{R}$.

Then the singularity of $u$ at $\Gamma$ is removable.
\end{corollary}

\begin{corollary} 
 [Interior pointwise estimate]     \label{223} Under the assumptions \ref{fnf16}, \ref{fnf21}, \ref{43}, \eqref{224}, \eqref{225}, \ref{102}--\ref{106}. Suppose that $u \in W_{\operatorname{loc}  }^{1  , p (x)  }( \Omega  \setminus \Gamma ; \wghtv ) \cap L_{\operatorname{loc}  }^{\infty}( \Omega  \setminus \Gamma)$ is a weak solution  in $ \Omega  \setminus \Gamma$ of equation \eqref{271}. Then, 
$$
|u(x)| \leq {\bf C} \piu (x)  ^{-\tau} \quad\text{\textit{a.e.} in}\quad \left\{x \in \Omega \mid  0< \piu (x)  <\frac{3\rdo}{4} \right\},
$$
where the constants ${\bf C}$ and $\tau$  are as in Proposition \ref{266}.
\end{corollary}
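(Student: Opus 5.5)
The statement to prove is Corollary \ref{223}. I would follow the scheme that yields Proposition \ref{266} from Lemma \ref{219} and Corollary \ref{277}, replacing boundary-specific ingredients with interior ones. The key observation is this: a weak solution of \eqref{271} in $\Omega\setminus\Gamma$ tests against $\zeta$ with $\spt\zeta\subset\Omega\setminus\Gamma$, and in \eqref{271} the extra term $\widehat b(\cdot,u)$ plays the role that $b(\cdot,u)$ played on $\partial\Omega$ in \eqref{101}. Since $\widehat b(x,u)\operatorname{sign}u\ge 0$, every test function of the form $\zeta=\eta^{s}(u-k)^{+}$ or $\zeta=\eta^{s}(u+k)^{-}$ with $k\ge 0$ gives $\widehat b(\cdot,u)\zeta\ge0$ on $\{u>k\}$ (resp. $\le0$ on $\{u<-k\}$ after the sign flip). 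The term can therefore be dropped exactly as the boundary integral was dropped, since $b(\cdot,u)\operatorname{sign}u\ge0$.

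The steps would be as follows.

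First, I would fix $x_0$ with $r=\piu(x_0)<3\rdo/4$ and set $\ell=\tfrac43 r$. I would then consider the annular region $\{r_1<\piu<r_2\}$ around $\piu=r$, with $[r_1,r_2]\subset(0,2\rdo)$, together with cut-offs $\eta$ built from $\piu$ and $\pid$, the latter via the decay \ref{102} as in \eqref{300}.

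Since $\mathcal U\Subset\Omega$, these cut-offs have support inside $\Omega\setminus\Gamma$, so no trace on $\partial\Omega$ appears at all. The resulting functions are admissible both for the definition of weak solution in $\Omega\setminus\Gamma$ and for the Sobolev inequality \eqref{107}, because they are compactly supported in $\overline{\mathcal U}\setminus(\partial\mathcal U\cap\Omega)$.

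Next, I would rerun the De Giorgi/Moser iteration of Lemma \ref{219} for $u^{+}$, now regarded as a subsolution of $-\operatorname{div}\mathscr A_p(\cdot,\grad u)+|u|^{q}\le0$ on $\{u>0\}$. This uses \ref{fnf16}, \ref{fnf21}, \ref{43}, the measure bounds \ref{104} and \eqref{107}, and produces the same bound $u^{+}\le\mathbf C r^{-\tau}$ near $x_0$ with identical constants, since none of them depended on $\partial\Omega$.

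For $u^{-}$, I would apply the same argument to $v=-u$ with the operator $\tilde{\mathscr A}_p$. Its structure conditions \ref{135}, \ref{136} follow from \eqref{276}, and $\widehat b$ and $f$ transform with the correct sign; this is exactly the content of Corollary \ref{277}. Combining the two bounds and letting $x_0$ range over the set gives the estimate a.e. in $\{0<\piu<3\rdo/4\}$.

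The main obstacle is checking that each test function in the proof of Lemma \ref{219} remains admissible in the interior setting. The points to verify are that its support avoids $\Gamma$, that it is compact (which is where \ref{102} and the $\pid$ cut-off are needed), and that the $\widehat b$ term has the favorable sign. I would first verify the latter pointwise: $\widehat b(x,u)(u-k)^{+}\ge0$ because $(u-k)^+>0$ forces $u>0$. Once admissibility holds, the constants carry over verbatim from Proposition \ref{266}.
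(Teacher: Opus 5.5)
Your proposal is correct and follows the paper's route. The paper also runs the interior analogue of Lemma~\ref{219} (recorded as Corollary~\ref{269}), discarding the $\widehat b$ term by its sign just as the boundary integral was discarded, uses the $\tilde{\mathscr A}_p$ argument of Corollary~\ref{277} for $-u$, and chooses $\ell=\tfrac43 r$ with $r=\piu(x)$. One detail differs: the paper's test function is $z_k=\max\{w-(m_t\widetilde\xi+k),0\}$, and it localizes in $\pid$ through $k>0$ and \ref{102} rather than through an explicit $\pid$ cut-off; your sign check still applies unchanged, since $z_k>0$ forces $u>0$.
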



\medskip

\noindent {\it The limit problem as \(p^+\to 1\).}  In this subsection, we assume that $(\rmf, \fnf, \wghtv)$ is a complete, reversible, $n$-dimensional Finsler manifold. We further assume that $\wghtv$ is doubling and that $\rmf$ satisfies a weak Poincaré inequality; see Definitions \ref{310} and \ref{311}, and Remark \ref{312}.
 In addition, we suppose that for every $s \in (1,\min\{2,q+1\})$ there exist constants $\mathtt{C}_{s} > 0$ and $\gamma_{s} > 0$ such that 
\begin{equation*}
s < \gamma_{s} \leq \frac{ns}{n-s}, 
\qquad 
\sup_{s \in (1,\min\{2,q+1\})} \mathtt{C}_{s} < \infty,
\end{equation*}
and for every $v\in W^{1,s} (\Omega ; \wghtv)$ with $\spt v \Subset \Omega$, we have
\begin{equation*}
\left(\int_{\Omega} |v|^{\gamma_{s}}\,\din \wghtv \right)^{\frac{1}{\gamma_{s}}}
\le \mathtt{C}_{s}
\left(\int_{\Omega} |\grad v|_{\fnf}^{s}\, \din \wghtv \right)^{\frac{1}{s}}.
\end{equation*}

We also assume that
\begin{equation}\label{291}
n>  \frac{q+1}{q}, \qquad q>0.
\end{equation}

We work with a domain $\mathscr{U} \Subset \Omega$, and for every variable exponent with $1< p^- \le p^+ <\min\{2,q+1\}$, we assume the existence of weak solutions $u _p \in W^{1, p(x)}_{\loc}(\Omega ; \wghtv) \cap L^{\infty} _{\loc}(\Omega)$ to
\begin{equation} \label{247}
-\operatorname{div} \left(  |\grad u_p|_{\fnf} ^{p-2} \grad u_p \right)  + |u_p|^{q-1}u_p = 0 
\quad \text{in } \Omega,
\end{equation}
see Definitions \ref{298} and \ref{299}. In particular, we have
\begin{equation} \label{248}
\begin{aligned}
\int_{\mathscr{U}}  |\grad u_p|_{\fnf} ^{p-2 }\grad u_p \bullet \deriv  \zeta   \, \din \wghtv    
  +  \int _{\mathscr{U}}  |u_p|^{q-1}u_p \zeta \, \din \wghtv    = 0,
\end{aligned}
\end{equation}
for all $\zeta \in W^{1, p(x) }_0 (\mathscr{U} ; \wghtv) \cap L^{\infty}(\mathscr{U})$.

We study the existence of a solution to
\begin{equation}\label{288}
 -\Delta_1 u + |u|^{q-1}u= 0 \quad \text { in } \mathscr{U}.
\end{equation}

\begin{definition}
A function $u \in B V(\mathscr{U} ;  \wghtv)$ is a solution to \eqref{288} if there exists $\mathbf{z} \in L^{\infty} (T\mathscr{U})$ with $\||\mathbf{z}|_{\fnf}\|_{L ^\infty (\mathscr{U})} \leq 1$ such that
$$
\begin{aligned}
-\operatorname{div} \mathbf{z}  + |u|^{q-1} u & =0 \quad \text { in } \mathcal{D} ^\prime (\mathscr{U}), \\
(\mathbf{z}, D u) & =\|D u\|_{v} \quad \text { as measures in } \mathscr{U}.
\end{aligned}
$$
\end{definition}

Here, for $v\in BV(\mathscr{U} ; \wghtv)\cap L^{q+1} (\mathscr{U} ; \wghtv)$ and for a bounded vector field ${\bf z}$ such that $\operatorname{div} {\bf z}  \in L^{\frac{q+1}{q}}(\mathscr{U} ; \wghtv)$, we define $({\bf z} , Dv) : C^1_c(\mathscr{U}) \to \mathbb{R}$ by
$$
\langle ({\bf z} , Dv) , \zeta \rangle := -\int _{\mathscr{U}} v\zeta \operatorname{div} {\bf z} \, \din \wghtv - \int _{\mathscr{U}} v{\bf z} \bullet \deriv \zeta \, \din \wghtv.
$$

We show that there exists a subsequence $\{u_{p_m}\}$ with $p^+_m \to 1$, ${\bf z} \in L^{\infty} (T\mathscr{U})$, and $u\in BV (\mathscr{U} ; \wghtv) \cap L^{q+1} (\mathscr{U} ; \wghtv )$, such that
\begin{gather*}
|\grad u_{p_m}|_{\fnf} ^{p_m-2} \grad u_{p_m} \rightharpoonup \mathbf{z} 
\quad \text { weakly in } L^s ( T\mathscr{U} ; \wghtv ) \text { for every } 1 \leq s<\infty, \\
u_{p_m} \to u \quad \text { strongly in } L^{1}( \mathscr{U} ; \wghtv ).
\end{gather*}
See Lemma \ref{283} for further details. In that lemma, Corollary \ref{223} plays an essential role.

Through this sequence of arguments, following the proof of
\cite[Theorem 4.3]{zbMATH07751769}, which deals with Sobolev spaces
with constant exponent on domains in $\mathbb{R}^n$, we obtain the
following result.

\begin{proposition}\label{302}
Assume that, for every variable exponent satisfying $1 < p^- \le p^+ < \min\{2,q+1\}$,
there exists a weak solution
$u_p \in W^{1,p(x)} _{\loc}(\Omega;\wghtv) \cap L^{\infty}_{\loc}(\Omega)$
of \eqref{247}. Then there exists a solution of the limit problem    \eqref{288}.
\end{proposition}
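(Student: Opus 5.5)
We prove Proposition \ref{302} by following \cite[Theorem 4.3]{zbMATH07751769}: we derive $p$-independent estimates, extract limits, and identify the limit problem. Fix $\mathscr U\Subset\Omega$ and choose $\mathscr U\Subset\mathscr U'\Subset\Omega$. Take exponents $p_m$ with $p_m^+\to1$, and write $u_m=u_{p_m}$.

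\textbf{Step 1: uniform $L^\infty$ bound.} Apply Corollary \ref{223} with $\Gamma=\emptyset$ (or with an auxiliary $\piu$ measuring distance to $\partial\mathscr U'$). This gives $|u_m|\le \mathbf C\,\piu^{-\tau}$ on $\mathscr U$. The abstract states that $\mathbf C$ and $\tau$ converge to positive limits as $p^+\to1$, so $\sup_m\|u_m\|_{L^\infty(\mathscr U)}\le M$.

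\textbf{Step 2: uniform BV and $L^{q+1}$ bounds.} Test \eqref{248} on $\mathscr U'$ with $\zeta=\eta^{p^+}u_m$, where $\eta$ is a cutoff. Using Young's inequality and Step 1, we get
\[
\int_{\mathscr U}|\grad u_m|_{\fnf}^{p_m}\,\din\wghtv+\int_{\mathscr U}|u_m|^{q+1}\,\din\wghtv\le K
\]
with $K$ independent of $m$. Since $|\grad u|\le |\grad u|^{p}+1$, $u_m$ is bounded in $W^{1,1}(\mathscr U)$. By the doubling and Poincaré hypotheses (BV compactness on metric measure spaces), a subsequence converges in $L^1(\mathscr U)$ to some $u\in BV\cap L^\infty$. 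Lower semicontinuity of total variation gives $\|Du\|(\mathscr U)<\infty$.

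\textbf{Step 3: the field $\mathbf z$.} Set $\mathbf z_m=|\grad u_m|^{p_m-2}\grad u_m$. For fixed $s$ and $p_m^+$ close to $1$, Hölder's inequality gives
\[
\int|\mathbf z_m|^{s}\le \int |\grad u_m|^{p_m}+|\mathscr U|\le C,
\]
and more precisely $\|\mathbf z_m\|_{L^s}\le C^{(p_m-1)/p_m}|\mathscr U|^{1/s}\to |\mathscr U|^{1/s}$. A diagonal argument gives $\mathbf z_m\rightharpoonup\mathbf z$ weakly in every $L^s$, and letting $s\to\infty$ yields $\||\mathbf z|_{\fnf}\|_\infty\le1$. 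Passing to the limit in \eqref{248}, using a.e. convergence and dominated convergence for $|u_m|^{q-1}u_m$, gives $-\operatorname{div}\mathbf z+|u|^{q-1}u=0$ in $\mathcal D'$. Hence $\operatorname{div}\mathbf z\in L^\infty$ and $(\mathbf z,Du)$ is well defined.

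\textbf{Step 4 (main obstacle): $(\mathbf z,Du)=\|Du\|$.} The inequality $\le$ follows from $|\mathbf z|\le1$ by approximation. For $\ge$, test \eqref{248} with $\zeta=u_m\varphi$, $\varphi\ge0$:
\[
\int\varphi|\grad u_m|^{p_m}+\int u_m\,\mathbf z_m\bullet\deriv\varphi+\int|u_m|^{q+1}\varphi=0.
\]
Use $|\grad u_m|\le \frac1{p_m}|\grad u_m|^{p_m}+\frac{p_m-1}{p_m}$ and lower semicontinuity, $\int\varphi\,\din\|Du\|\le\liminf\int\varphi|\grad u_m|$. Then pass to the limit in the other terms ($u_m\to u$ strongly in $L^1$ with a uniform $L^\infty$ bound, and $\mathbf z_m$ weakly). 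This gives
\[
\int\varphi\,\din\|Du\|\le-\int u\,\mathbf z\bullet\deriv\varphi-\int|u|^{q+1}\varphi=\langle(\mathbf z,Du),\varphi\rangle,
\]
where the last equality uses $\operatorname{div}\mathbf z=|u|^{q-1}u$.

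The delicate points are these:
\begin{enumerate}[label=(\roman*)]
\item justifying lower semicontinuity of the Finsler total variation under reversibility;
\item the pairing theory in the Finsler/metric setting;
\item above all, the uniformity of $\mathbf C$ and $\tau$ as $p^+\to1$ in Step 1.
\end{enumerate}
Without (iii), products such as $u_m\mathbf z_m$ fail to converge. For (iii), I would first check the explicit formulas in Proposition \ref{266} with $\epsilon$ fixed and use $\sup_s\mathtt C_s<\infty$.
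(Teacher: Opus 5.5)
Your Steps~2--4 follow the paper's argument (Lemma~\ref{283}, then Lemma~\ref{303}). However, Step~1, which everything after it depends on, does not work as written.

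Corollary~\ref{223} has no version with $\Gamma=\emptyset$. By \eqref{278}, $\piu^{-1}(\{t\})\neq\emptyset$ for all $t\in[0,2\rdo]$, and the conclusion is only a bound by $\mathbf C\,\piu^{-\tau}$ in the layer $\{0<\piu<3\rdo/4\}$. Taking $\piu=\operatorname{dist}(\cdot,\partial\mathscr U')$ is not admissible either, for two reasons:
\begin{itemize}
\item $\wghtv(\{0<\piu<r\})$ is of order $r$, so \ref{104} forces $n_d\le 1$, which contradicts $1<p^+<n_d$ in \eqref{224}.
\item Even setting this aside, the layer $\{\piu<3\rdo/4\}$ with $\rdo<\tfrac12$ need not contain $\mathscr U$.
\end{itemize}
So Step~1, as stated, does not give the uniform $L^\infty$ bound that Steps~2--4 use.

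The repair is the choice made in Lemma~\ref{250}: $\piu=d(y,\cdot)$, $\pid\equiv0$, $n_d=n$, with $y$ ranging over points near $\overline{\mathscr U'}$.

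The paper keeps the resulting singular bound $|u_p|\le\mathbf C_{p^+}\,d(x_i,\cdot)^{-\tau_{p^+}}$ and integrates it. This is why it needs $\lim p^+\tau_{p^+}<n$ and $\lim (q+1)\tau_{p^+}<n$; the latter is where \eqref{291} enters. It is also why, lacking a uniform $L^\infty$ bound, it tests with $T_k(u_{p_m})\zeta$ and lets $k\to\infty$ only after $m\to\infty$.

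You can instead read the same estimate away from the center. If $d(y,x_0)=\rdo/2$, then $d(y,\cdot)\in[3\rdo/8,5\rdo/8]$ on $B_{\rdo/8}(x_0)$, so $|u_p|\le\mathbf C_{p^+}(3\rdo/8)^{-\tau_{p^+}}$ a.e.\ there. The constant of Lemma~\ref{219} is uniform in the solution and, locally, in the center. Hence finitely many such balls give $\sup_m\|u_{p_m}\|_{L^\infty(\mathscr U')}<\infty$.

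With this repair your route is correct and slightly more economical than the paper's: no truncation is needed, and the condition $n>(q+1)/q$ in \eqref{291} can be dropped.

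Both routes rely equally on the uniformity \eqref{289}, which the paper asserts without detail. When you check it, note that the explicit $\mathbf C$ in Proposition~\ref{266} contains the factor $(\gaglia-p^-)^{-1}$. So besides fixing $\epsilon$ and using $\sup_s\mathtt C_s<\infty$, you also need $\gamma_s-s$ to stay bounded away from $0$ as $s\to1$. This holds for $\gamma_s=ns/(n-s)$, but it is not implied by $s<\gamma_s\le ns/(n-s)$ alone.
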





\section{Preliminaries} \label{31}

\subsection{Finsler Manifolds}

We recall some facts and notation about Finsler manifolds. Most properties can be found in \cite{farkas2015singular, mester2022functionalinequa,  ohta2009heatfinsler, zbMATH01624431, zbMATH05118101, xiong2020uniquenessnonnegative}.

Throughout this paper, $\fnf : T\rmf \to [0, \infty)$ is a function, called a Finsler structure, satisfying the following properties:
\begin{enumerate}[label=($F_{\arabic*}$)]
\item $\fnf$ is $C^{\infty}( T\rmf \setminus \{0\} )$.

\item $\fnf (x , t Y)=t \fnf ( x , Y)$ for all $(x, Y) \in T \rmf$ and $t> 0$.

\item \label{304} For each $x_0\in \rmf$, there is a chart $\psi:\widetilde U\subset\mathbb{R}^n\to \psi(\widetilde U)\subset \rmf$ such that, for $x=\psi(\mathbf{x})$ and $Y\in T_x\rmf\setminus\{0\}$, the matrix defined by
$$
g_{ij}(x,Y):=\left.\frac{\partial^2}{\partial \xi^i\partial \xi^j}
\Big(\tfrac12\,\widetilde F(\mathbf{x},\xi)^2\Big)\right|_{\mathbf{x}=\psi^{-1}(x),\ \xi=d\psi_{\mathbf{x}}^{-1}(Y)}
$$
is positive definite, where
$$
\widetilde \fnf (\mathbf{x},\xi):=\fnf\bigl(\psi(\mathbf{x}),d\psi_{\mathbf{x}}(\xi)\bigr).
$$
More precisely, there exist constants $\kappa_1,\kappa_2>0$ such that
$$
\kappa_1\sum_{k=1}^n(\eta^k)^2
\le g_{ij}(x,Y)\,\eta^i\eta^j
\le \kappa_2\sum_{k=1}^n(\eta^k)^2
$$
for all $x\in\psi(\widetilde U)$, $Y\in T_x\rmf\setminus\{0\}$, and $\eta=\eta^i\partial_i\in T_x\rmf$, where \(\{\partial_i\}_{i=1}^n\) is the basis associated with the chart \(\psi\).
\end{enumerate}

If $\fnf(x,Y)=\fnf(x,-Y)$ for all $(x,Y)\in T\rmf$, then the Finsler manifold $(\rmf,\fnf)$ is said to be {\it reversible}.

Let $V=v^i\partial_i$ be a non-vanishing vector field on an open subset $U\subset \rmf$. Then one can define a Riemannian metric $g_V$ on $TU$ by
$$
g_V(X,Y)(x):=X^iY^j\,g_{ij}(x,V(x)).
$$

Throughout this paper, we fix a measure $\wghtv$ on $\rmf$. For each point $x\in \Omega$, we assume that there exist a neighborhood $U$, a local coordinate system $\psi:\widetilde U\to U$, and a measurable function $\tilde{\wghtv}:\widetilde U\to \mathbb{R}^+$ such that, for some constant $C>0$,
\begin{equation}\label{297}
C^{-1}\le \tilde{\wghtv}\le C \qquad \text{in } \widetilde U.
\end{equation}

Moreover, for every measurable set $E\subset U$, we have
\begin{equation}\label{256}
\wghtv(E)=\int_{\psi^{-1}(E)} \tilde{\wghtv}\,\din {\bf x}.
\end{equation}

As an immediate consequence, for every integrable function $f:U\to \mathbb{R}$,
\begin{equation}\label{255}
\int_E f\,\din \wghtv
=
\int_{\psi^{-1}(E)} (f\circ \psi)\,\tilde{\wghtv}\,\din {\bf x}.
\end{equation}

\subsubsection{The Legendre Transform}
The polar transform (or co-metric) \( \fnfs : T^\ast \rmf \to [0,\infty) \) is the dual metric associated with \( \fnf \), defined by
\begin{equation}\label{162}
\fnfs(x,\omega)=\sup_{v\in T_x\rmf\setminus\{0\}} \frac{\omega(v)}{\fnf(x,v)},
\end{equation}
where \(T^\ast \rmf=\bigcup_{x\in\rmf} T_x^\ast \rmf\) denotes the cotangent bundle of \( \rmf \), and \(T_x^\ast\rmf\) is the dual space of \(T_x\rmf\).

Now let \(x=\psi({\bf x})\in \psi(\widetilde U)\subset \rmf\), where \(\psi\) is a chart. Then, for every \(\omega\in T_x^\ast\rmf\), there exists a unique \((\omega_1,\ldots,\omega_n)\in\mathbb{R}^n\) such that
\[
\omega\bigl(d\psi_{\bf x}(v)\bigr)=\bigl\langle (\omega_1,\ldots,\omega_n),v\bigr\rangle
\qquad \forall\, v\in\mathbb{R}^n,
\]
where \(\langle\cdot,\cdot\rangle\) denotes the Euclidean inner product. In these local coordinates, we write
\[
\widetilde{\fnfs}\bigl({\bf x},(\sigma^1,\ldots,\sigma^n)\bigr)
=
\fnfs\bigl(x,\sigma^i \partial_i^\ast(x)\bigr),
\qquad x=\psi({\bf x}),
\]
where \(\partial_i^\ast(x)\in T_x^\ast \rmf\) is defined by
\[
\partial_i^\ast(x)(\partial_j(x)):=\delta_{ij}.
\]

Since \( \fnfs(x,\cdot)^2 \) is twice differentiable on \(T_x^\ast\rmf\setminus\{0\}\), one can define the dual Hessian matrix \( [g_{ij}^\ast(x,\omega)] \) by
\[
g_{ij}^\ast(x,\omega):=
\left.
\frac{1}{2}
\frac{\partial^2}{\partial \sigma^i\,\partial \sigma ^j}
\widetilde{\fnfs}\bigl({\bf x},(\sigma ^1,\ldots,\sigma ^n)\bigr)^2
\right|_{
{\bf x}=\psi^{-1}(x),\,
(\sigma^1,\ldots,\sigma^n)=(\omega_1,\ldots,\omega_n)
} .
\]

As in property \ref{304}, for every \(x\in \rmf\) there exist a chart (still denoted by \(\psi\)) and positive constants \(\kappa_1^\ast\) and \(\kappa_2^\ast\) such that
\begin{equation}\label{308}
\kappa_1^\ast \sum_{k=1}^n (\sigma^k)^2
\le
g_{ij}^\ast(x,\omega)\,\sigma^i\sigma^j
\le
\kappa_2^\ast \sum_{k=1}^n (\sigma^k)^2 .
\end{equation}
This follows from \cite[Lemma 1.1]{ohta2009heatfinsler}.

The Legendre transform \(J^\ast:T^\ast\rmf\to T\rmf\) is defined as follows: for each fixed \(x\in\rmf\), \(J^\ast\) associates to each \(\omega\in T_x^\ast\rmf\) the pair \((x,v)\in T\rmf\), where \(v\in T_x\rmf\) is the unique maximizer of the map
\begin{equation}\label{228}
v\mapsto \omega(v)-\frac{1}{2}\fnf^2(x,v).
\end{equation}
See \cite[§ 3.2 Existence of minimizer]{arXiv:2001.00216} or \cite[Corollary 25.15]{zbMATH00045061}. The strict convexity of \(\tfrac12\fnf^2(x,\cdot)\) implies the uniqueness of the maximizer in \eqref{228}.

In connection with \eqref{228}, \cite[Lemma 5.8 (Fenchel--Young)]{arXiv:2001.00216} (see also \cite[Proposition 51.2]{zbMATH03933858}) states that if \(f:X\to\overline{\mathbb{R}}\) is proper, convex, and lower semicontinuous on a normed vector space \(X\), then for any \(x\in X\) and \(x^\ast\in X^\ast\), the following statements are equivalent:
\begin{enumerate}[label=$(\roman*)$]
\item \(\langle x^\ast,x\rangle=f(x)+f^\ast(x^\ast)\);
\item \(x^\ast\in\partial f(x)\);
\item \(x\in\partial f^\ast(x^\ast)\).
\end{enumerate}
Here \(f^\ast\) denotes the Fenchel conjugate (or convex conjugate) of \(f\), defined by
\[
f^\ast:X^\ast\to\overline{\mathbb{R}},\qquad
f^\ast(x^\ast)=\sup_{x\in X}\bigl\{\langle x^\ast,x\rangle-f(x)\bigr\}.
\]
Moreover, the (convex) subdifferential of \(f\) at \(x\in \operatorname{dom}f\) is defined by
\[
\partial f(x):=
\left\{
x^\ast\in X^\ast \,\middle|\,
\langle x^\ast,\tilde x-x\rangle\le f(\tilde x)-f(x)
\quad \text{for all } \tilde x\in X
\right\}.
\]

Therefore, if $J^\ast(x,\omega)=(x,{\bf v})$, then
$$
\omega \in \partial \left[\frac{1}{2}\fnf^2(x,{\bf v})\right].
$$

Moreover, if  $\omega \not \equiv 0$, then
$$
\omega({\bf v})-\frac{1}{2}\fnf^2(x,{\bf v})
\ge t\omega(Y)-\frac{1}{2}t^2\fnf^2(x,Y),
$$
for all $t\in\mathbb{R}$ and all $Y\in T_x\rmf$ such that $\omega(Y)>  0$ and $\fnf(x,Y)>0$.

Optimizing the right-hand side with respect to $t$, we obtain
$$
\omega({\bf v})-\frac{1}{2}\fnf^2(x,{\bf v})
\ge \frac{1}{2}\frac{\omega(Y)^2}{\fnf^2(x,Y)}.
$$
Hence,
$$
2\left(\omega({\bf v})-\frac{\fnf(x,{\bf v})\omega(Y)}{\fnf(x,Y)}\right)
\ge
\left(\fnf(x,{\bf v})-\frac{\omega(Y)}{\fnf(x,Y)}\right)^2.
$$
It follows that
$$
\fnfs(x,\omega)=\frac{\omega({\bf v})}{\fnf(x,{\bf v})},
\qquad
\fnf^2(x,{\bf v})=\omega({\bf v}).
$$
In particular,
$$
\fnf(x,{\bf v})=\fnfs(x,\omega).
$$

In local coordinates, if $J^\ast(x,\omega)=(x,{\bf v})$, then, by \cite[Section 1.2]{ohta2009heatfinsler}, the fiber component ${\bf v}$ is given by
$$
{\bf v} = \sum_{i=1}^n
\frac{\partial}{\partial \omega_i}
\left(\frac{1}{2}{\fnfs}^2(x,\omega)\right)\partial_i,
$$
where $\{\partial_i\}_{i=1}^n$ denotes the basis associated with the chart $\psi$.

\subsubsection{Gradient and Distance Functions}

For a weakly differentiable function \( u: \rmf \rightarrow \mathbb{R} \), the gradient vector at \( x \) is defined by
$$
\grad u(x):=J^*\bigl(x, \deriv u(x)\bigr),
$$
for every regular point \( x \in \rmf \), where the derivative \( \deriv u(x) \in T_{x}^\ast \rmf \) is well-defined. By applying the properties of the Legendre transform, it follows that
\begin{equation} \label{fnf113}
 \fnfs  \bigl(x , \deriv v (x) \bigr)=\fnf  \bigl(x , \grad v (x) \bigr)
\end{equation}

In a local coordinate system, if \( x \in \psi(\tilde{U}) \), we have 
$$
\deriv u(x)=\sum_{i=1}^n \bigl( \partial _i (u)  \partial _i ^\ast \bigr)(x) \quad \text { and } \quad \grad u(x)=\sum_{i, j=1}^n \left(  g_{i j}^\ast (\cdot  , \deriv u) \partial _i(u) \partial _j \right)(x) ,
$$
where $\partial ^\ast _i (x) \in T^\ast _{x} \rmf$ is defined by $\partial ^\ast _i (x)(\partial _j (x)):= \delta _{ij}$.

For a differentiable vector field $Y : \rmf \rightarrow T\rmf$ on $\rmf$, we define its divergence $\operatorname{div} Y: \rmf \to \mathbb{R}$ through the identity
$$
\int_{\rmf} u \operatorname{div}  Y \, \din \wghtv =-\int_{\rmf} \deriv u (Y) \, \din \wghtv.
$$

Let $\Omega _0 $ be a compact domain with smooth boundary $\partial \Omega  _0 $ and $\nu$ denote the outward pointing normal vector. Then, for any smooth vector field $X$ on $\rmf$,
$$
\int_{\Omega_0} \operatorname{div}(X) \, \din  \wghtv=\int_{\partial \Omega_0} g_{\nu}( \nu, X) \, \din A_{\wghtv},
$$
where $\din A_{\wghtv}$ is the volume form on $\partial \Omega _0$ induced from $\din \wghtv$.

We observe that the nonlinearity of the Legendre transform extends to the gradient vector, namely \(\grad (u+v) \neq \grad u + \grad v\) in general. For the same reason, at points \(x\) where \(\grad u(x) = 0\), the gradient vector field \(\grad u\) is, in general, not differentiable (even if \(u\) is smooth), but only continuous.




We define the distance function $d : \rmf \times \rmf \rightarrow [0,\infty)$ by
$$
d(x , y)=\inf _\gamma \int_a^b \fnf  \bigl(  \gamma (t) , \gamma^\prime (t)  \bigr) \, \din t,
$$
where the infimum is taken over all piecewise differentiable curves \(\gamma : [a,b] \to \rmf\) with \(\gamma(a) = x\) and \(\gamma(b) = y\). This definition is equivalent to
$$
d(x, y):=\sup \Bigl\{u(y)-u(x) \  |  \ u \in C^1(\rmf), \fnf ( z, \grad u(z)) \leq 1 \text { for all } z \in \rmf \Bigr\} .
$$

 For a fixed \(y \in \rmf\), the distance function \(x \mapsto d(y, x)\) satisfies \(\fnf(x, \grad d(y, x)) = 1\) for almost every \(x \in \rmf\). Moreover, the distance function \(d\) possesses the following properties of a metric:
\begin{enumerate}
\item[$(a)$] \(d(x, y) \geq 0\) for all \(x, y \in \rmf\), and \(d(x, y) = 0\) if and only if \(x = y\).

\item[$(b)$] \(d(x, z) \leq d(x, y) + d(y, z)\) for all \(x, y, z\in \rmf\).
\end{enumerate}

However, in general, the distance function is not symmetric. In fact, \(d(x, y) = d(y , x)\) for all \(x, y \in \rmf\) if and only if \((\rmf , \fnf)\) is a reversible Finsler manifold.

\begin{lemma}\label{307}
Let $u : \rmf \to \mathbb{R}$ be differentiable at $x$. Then
\begin{equation}
\fnf(x, \grad u(x)) = \underset{y\to x}{\limsup}\,\frac{u(y)-u(x)}{d(x,y)}.
\end{equation}
\end{lemma}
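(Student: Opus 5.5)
The plan is to prove the two inequalities separately, working in a local chart around $x$ and comparing the Finsler distance with the Euclidean one. Throughout, $\grad u(x)=J^\ast(x,\deriv u(x))$, and by \eqref{fnf113} together with the Legendre computations above we have
\[
\fnf(x,\grad u(x))=\fnfs(x,\deriv u(x)), \qquad \deriv u_x(\grad u(x))=\fnf^2(x,\grad u(x)).
\]
So it suffices to show that the $\limsup$ equals $\fnfs(x,\deriv u(x))=\sup_{v\neq 0}\deriv u_x(v)/\fnf(x,v)$. If $\deriv u(x)=0$, both sides are $0$; the limsup is $\le 0$ by the upper bound below. We therefore assume $\deriv u(x)\neq 0$.

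\emph{Upper bound.} Fix a chart $\psi$ with $x=\psi(\mathbf{x})$. Differentiability gives
\[
u(y)-u(x)=\deriv u_x\bigl(d\psi_{\mathbf x}(\mathbf y-\mathbf x)\bigr)+o(|\mathbf y-\mathbf x|).
\]
We need a lower bound $d(x,y)\ge (1-\eta)\,\fnf\bigl(x,d\psi_{\mathbf x}(\mathbf y-\mathbf x)\bigr)-o(|\mathbf y-\mathbf x|)$, together with $d(x,y)\ge c|\mathbf y-\mathbf x|$ for $y$ near $x$. To get it, take any curve $\gamma$ from $x$ to $y$ of length close to $d(x,y)$. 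By continuity of $\fnf$ and the uniform equivalence with the Euclidean norm in the chart (from \ref{304}), such curves stay in a small coordinate neighbourhood when $y$ is close to $x$. On that neighbourhood $\fnf(\gamma,\gamma')\ge(1-\eta)\fnf(x,\cdot)$ in coordinates, where $\eta\to0$ as the neighbourhood shrinks. Convexity (Jensen) of the norm $\fnf(x,\cdot)$ then gives $\int\fnf(x,\gamma')\,dt\ge\fnf(x,\int\gamma')=\fnf\bigl(x,d\psi_{\mathbf x}(\mathbf y-\mathbf x)\bigr)$. Combining these,
\[
\frac{u(y)-u(x)}{d(x,y)}\le \frac{\deriv u_x(w)+o(|w|)}{(1-\eta)\fnf(x,w)}\le \frac{\fnfs(x,\deriv u(x))}{1-\eta}+o(1),
\]
with $w=d\psi_{\mathbf x}(\mathbf y-\mathbf x)$, and letting $\eta\to0$ gives the upper bound.

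\emph{Lower bound.} Let $Y=\grad u(x)\neq0$ and take $y_t=\psi(\mathbf x+t\,d\psi_{\mathbf x}^{-1}Y)$ for $t\downarrow0$. Using the straight coordinate segment as a competitor curve, continuity of $\fnf$ gives $d(x,y_t)\le t\,\fnf(x,Y)(1+o(1))$. Differentiability gives $u(y_t)-u(x)=t\,\deriv u_x(Y)+o(t)=t\,\fnf^2(x,Y)+o(t)$. Hence the ratio along $y_t$ tends to at least $\fnf(x,Y)$, which proves the lower bound.

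The main obstacle is the localization step in the upper bound: showing that almost-minimizing curves from $x$ to $y$ remain in a small coordinate ball. Without this we cannot use the frozen norm $\fnf(x,\cdot)$. I would first establish $d(x,y)\ge c\,|\mathbf y-\mathbf x|$ by noting that any curve leaving the ball $B_\rho(\mathbf x)$ has length at least $c\rho$, using the uniform bounds on $\fnf$ over the chart's unit sphere bundle. Then, for $|\mathbf y-\mathbf x|\ll\rho$, no almost-minimizer can exit the ball. Note that the statement implicitly uses the forward distance $d(x,y)$, so reversibility is not needed.
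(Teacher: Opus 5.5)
Your proof is correct, but it takes a different route from the paper. The paper works with the exponential map. It writes every $y$ near $x$ as $\exp_x(v)$ and uses the exact identity $d(x,\exp_x(v))=\fnf(x,v)$ for small $v$, i.e.\ local minimality of radial geodesics. Both inequalities then reduce to one-line computations with $\deriv u_x(v)/\fnf(x,v)$. For the lower bound the paper tests along $\exp_x(tv)$ for every $v$ and takes the supremum, which also covers $\deriv u(x)=0$ automatically.

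You instead prove the asymptotic statement $d(x,y)=\fnf(x,w)(1+o(1))$, where $w$ is the coordinate displacement. For this you use only continuity of $\fnf$, its comparability with the Euclidean norm in a chart, the localization of almost-minimizing curves, and Jensen's inequality for the frozen norm $\fnf(x,\cdot)$. Your lower bound uses a single coordinate segment in the direction $\grad u(x)$ as a competitor.

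The paper's argument is shorter, but it relies on Finsler geodesic facts that it quotes without proof. These are the Gauss-lemma-type identity above, and the fact that $\exp_x$ is a local homeomorphism near $0$ that is $C^1$ at the origin with differential the identity, so that $u\circ\exp_x$ is differentiable at $0$. Your argument is self-contained and needs no geodesic theory or smoothness of $\fnf$ beyond continuity. It also shows exactly where convexity of the fibre norm enters: the Jensen step in the upper bound.

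Two small points to tidy up. First, when $\deriv u(x)=0$, the inequality $\limsup\ge 0$ does not follow from your upper bound. It follows from $|u(y)-u(x)|=o(|\mathbf y-\mathbf x|)$ together with $d(x,y)\ge c\,|\mathbf y-\mathbf x|$, or from your segment argument in any direction. Second, the first inequality in your upper-bound display should be used only when $u(y)>u(x)$. Otherwise dividing by a lower bound for $d(x,y)$ goes the wrong way; in that case, however, the quotient is $\le 0\le \fnfs(x,\deriv u(x))$ anyway.
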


\begin{proof}
Since \(u\) is differentiable at \(x\), for every \(\epsilon>0\) there exists  a sufficiently small $\delta \in (0,1)$ such that
\[
\frac{u(\exp_x(v^i\partial_i(x))) - u(x)}{|v|_{\mathbb{R}^n}}
< \deriv u_x\bigl(v^i\partial_i(x)\bigr) + \epsilon,
\qquad 0 < |v|_{\mathbb{R}^n} < \delta < 1.
\]

Hence,
\[
\frac{u(\exp_x(v^i\partial_i(x))) - u(x)}{d\bigl(x,\exp_x(v^i\partial_i(x))\bigr)}
<
|v|_{\mathbb{R}^n}\,\frac{\deriv u_x\bigl(v^i\partial_i(x)\bigr)}{\fnf\bigl(x,v^i\partial_i(x)\bigr)}
+
\frac{|v|_{\mathbb{R}^n}}{\fnf\bigl(x,v^i\partial_i(x)\bigr)}\,\epsilon,
\]
because
\[
d\bigl(x,\exp_x(v^i\partial_i(x))\bigr)=\fnf\bigl(x,v^i\partial_i(x)\bigr).
\]

Therefore, we obtain
\begin{equation}\label{305}
\limsup_{y \to x} \frac{u(y)-u(x)}{d(x,y)}
\leq \fnfs(x,\deriv u(x)) = \fnf(x,\grad u(x)).
\end{equation}

Furthermore, for $v \in T_x\rmf \setminus \{0\}$, we have
\[
\limsup_{y \to x} \frac{u(y)-u(x)}{d(x,y)}
\geq
\lim_{t\downarrow 0} \frac{u(\exp_x(tv)) - u(x)}{d(x,\exp_x(tv))}
=
\frac{\deriv u_x(v)}{\fnf(x,v)}.
\]
Which implies
\begin{equation}\label{306}
\limsup_{y \to x} \frac{u(y)-u(x)}{d(x,y)}
\geq \fnfs(x,\deriv u(x)).
\end{equation}

From \eqref{305} and \eqref{306}, we conclude the proof of the lemma.
\end{proof}

\subsection{Variable exponent Sobolev spaces}

Next, we recall some key facts and notation regarding variable exponent Lebesgue and Sobolev spaces. A detailed discussion of the properties of these spaces can be found in \cite{gaczkowski2013sobolev, gaczkowski2016sobolev} for Riemannian manifolds, \cite{futamura2006sobolev, zbMATH06473105, harjulehto2006sobolev, harjulehto2006variablemetric} for metric spaces, and \cite{diening2011lebesgue, kovuavcik1991spaces} for the Euclidean setting. For Finsler manifolds with constant exponent \(p\), we refer the reader to \cite{farkas2015singular, arXiv:2409.05497, ohta2009heatfinsler}.

Let $p : \Omega \rightarrow [1,\infty)$ be a measurable function, and let $\hat \Omega  \subset \Omega$ be an open subset. The variable exponent, or generalized, Lebesgue space $L^{p(x)}(\hat \Omega ; \wghtv)$ is the space of all measurable functions \( u : \hat \Omega \rightarrow \mathbb{R} \) for which the functional
$$
\rho_{p  }(u):=\int_{\hat \Omega}|u|^{p} \, \din \wghtv
$$
is finite. This is a special case of an Orlicz-Musielak space; see \cite{musielak2006orlicz}.

The functional \( \rho_{p} \) is convex and is sometimes called a convex modular \cite{kovuavcik1991spaces}. The space \( L^{p(x)}(\hat \Omega; \wghtv) \) is a Banach space with respect to the Luxemburg-Minkowski-type norm
$$
\|u\|_{L^{p(x)} ( \hat \Omega ; \wghtv)}:=\inf  \left\{t>0  \mid \rho_{p } \left(\frac{u}{t} \right) \leq 1  \right\} .
$$
By \cite[Lemma 3.1]{zbMATH02160501},  $L^{p(x)} (\hat \Omega ; \wghtv)$ is a Banach space. Moreover, by \cite[Proposition 2.3]{harjulehto2006variablemetric}, it is reflexive if $1<\inf _{\hat{\Omega}} p \leq \sup _{\hat{\Omega}} p <\infty$.


In what follows, we will use the generalized Hölder inequality:
\begin{equation}\label{fnf116}
\int_{\hat \Omega }  |uv| \, \din \wghtv \leq 2 \|u\|_{L^{p(x)} ( \hat \Omega ; \wghtv) } \|v\|_{L^{p^{\prime}(x)} ( \hat \Omega ; \wghtv)},
\end{equation}
as stated in \cite[Theorem 2.1]{kovuavcik1991spaces}.

To compare the functionals $\|\cdot \|_{L^{p(x)} ( \hat \Omega ; \wghtv)}$ and $\rho_{p}$, we have the following relations:
\begin{equation}\label{152}
\min  \left\{\rho_{p}(u)^{1 / p^{-}}, \rho_{p}(u)^{1 / p^{+}} \right\} \leq\|u\|_{L^{p(x)} ( \hat \Omega ; \wghtv)} \leq \max  \left\{\rho_{p} (u) ^{1 / p^{-}}, \rho_{p}(u)^{1 / p^{+}} \right\},
\end{equation}
see \cite[Corollary 2.22]{zbMATH06090416}.


The variable exponent Sobolev space $W^{1, p(x)}(\hat \Omega ; \wghtv)$ is defined by
$$
W ^ {1, p(x)}(\hat \Omega ; \wghtv)  :=\left\{u \in L^{p(x)} (\hat \Omega ; \wghtv) \  |  \ \int_{\hat \Omega} \fnf (\cdot , \grad u)^p \, \din \wghtv<\infty \right\} .
$$
This space is a Banach space when equipped with the norm
\begin{equation*}\label{fnf177}
\|u\|_{W^{1, p(x)} (\hat \Omega ; \wghtv)}:= \|u\|_{ L^{p(x)} ( \hat \Omega ; \wghtv) } + \left\|\fnf ( \cdot ,  \grad u) \right\|_{L^{p(x)} ( \hat \Omega ; \wghtv)}.
\end{equation*}
We also define $W^{1 , p(x)} _0(\hat \Omega ; \wghtv)$ as the closure of $C^{\infty} _{0} (\hat \Omega)$ in $W^{1 , p(x)} (\hat \Omega ; \wghtv)$ with respect to the norm $\|\cdot\|_{ W^{1, p (x)}( \hat \Omega ; \wghtv )}$.

The tangent module to $( \hat{\Omega} , d,  \wghtv)$, see \cite[Section 7.2]{zbMATH07567821} and \cite{zbMATH07166538}, is defined as
$$
L^{p(x)} (T\hat \Omega ; \wghtv ) := \left\{ V : \hat \Omega \to T \hat \Omega \mid V(x)\in T _x\hat  \Omega, \ \int_{\hat \Omega} \fnf (\cdot , V)^p \, \din \wghtv  <\infty \right\}.
$$

\begin{remark}\label{fnf156} $ $

\begin{enumerate}[label = $(\alph*)$]
\item  \label{fnf157} Following the definition of Sobolev spaces with variable exponent on a Riemannian manifold given in \cite{gaczkowski2013sobolev, gaczkowski2016sobolev}, an alternative way to define a Sobolev space in our setting is as follows. Set
$$
\mathcal{C}_1^{p(x)}(\hat \Omega ; \wghtv )=\left\{u \in C^{\infty}(\hat \Omega) \mid  \fnf \left(\cdot , \grad u\right) \in L^{p(x)}( \hat \Omega ; \wghtv)\right\} .
$$
We then define the variable exponent Sobolev space $H_1^{p(x)}(\hat{\Omega} ; \wghtv )$ as the completion of $\mathcal{C}_1^{p(x)}(\hat \Omega ; \wghtv )$ with respect to the norm $\|\cdot\|_{W^{1,p(x)}( \hat{\Omega};  \wghtv )}$. As in the Euclidean case, in general, $W^{1,p(x)}(\hat{\Omega} ; \wghtv)$ is not equal to $H_1^{p(x)}(\hat{\Omega} ; \wghtv)$; see \cite{zbMATH02196634}.

\item \label{309} From \eqref{308}, we have
$$
\kappa_1^\ast \sum_{i=1}^n (\omega^i)^2 \leq {\fnfs}^2(x,\omega) \leq \kappa_2^\ast \sum_{i=1}^n (\omega^i)^2,
$$
where $\omega(x)=\omega^i \partial_i^\ast(x)\in T_x^\ast \rmf$. Then, if $u$ is differentiable at $x\in\psi (\widetilde U)$,
$$
\sqrt{\kappa_1^\ast}\, |\nabla  (u \circ \psi )|_{\mathbb{R}^n}
\leq \fnf(x,\grad u(x))
\leq \sqrt{\kappa_2^\ast}\, |\nabla  (u \circ \psi)|_{\mathbb{R}^n},
$$
since $\fnfs(x,\deriv u(x))=\fnf(x,\grad u(x))$.
\end{enumerate}
\end{remark}

Note that the classes \( L_{\loc}^{p(x)}(\Omega ; \wghtv) \) and \( W_{\loc}^{1, p(x)}(\Omega; \wghtv) \) depend only on the manifold structure of \( \Omega \) (not on the Finsler structure \( \fnf \) or the measure \( \wghtv \)); see the last paragraph on page 1394 of \cite{ohta2009heatfinsler}. More precisely, let \( \hat \Omega \Subset  \Omega \) be an open set, and consider two measures \( \wghtv_1 \) and \( \wghtv_2 \) satisfying \eqref{297} and \eqref{256}. These measures are comparable on \( \hat \Omega \). In other words, there exists a constant \( C > 0 \) such that
$$
C \wghtv _1 (E) \leq \wghtv _2 (E) \leq C^{-1} \wghtv _1 (E),
$$
for every measurable subset \( E \subset \hat \Omega \). This implies that \( \int_{\hat \Omega} |u|^p  \, \din \wghtv_1 < \infty \) if and only if \( \int_{\hat \Omega} |u|^p \, \din \wghtv_2 < \infty \). Thus, \( L_{\loc}^{p(x)}(\Omega; \wghtv) \) does not depend on the measure \( \wghtv \).

For \( W_{\loc}^{1, p(x)}(\Omega ; \wghtv) \), we further observe that for any two Finsler structures \( \fnf_1 \) and \( \fnf_2 \), by 1-homogeneity and property \ref{304}, we have
$$
c \fnf_1(\cdot, \grad u) \leq \fnf_2(\cdot, \grad u) \leq c^{-1} \fnf_1(\cdot, \grad u) \quad \text { in } \quad \hat \Omega,
$$
for some constant \( c > 0 \). Therefore, \( W_{\loc}^{1, p(x)}(\Omega; \wghtv) \) does not depend on either the measure \( \wghtv \) or the Finsler structure \( \fnf \).


\subsection{BV spaces}

In this section we collect some definitions and properties of the space \(BV\), which can be found, for instance, in \cite{zbMATH06320681, zbMATH07523073, zbMATH07047806, zbMATH07567821, zbMATH08128442, zbMATH05152934}.

By a metric measure space we mean a triple \((\mathbb{X}, d, \mu)\), where \((\mathbb{X}, d)\) is a complete and separable metric space, and \(\mu\) is a nonzero, nonnegative Borel measure on \((\mathbb{X}, d)\) which is finite on bounded sets.

Now we recall the definitions of upper gradient and Poincaré inequality; see \cite{zbMATH05233008, zbMATH08128442, zbMATH06397370} for further details. We say that a curve \(\gamma:[0,1] \to \mathbb{X}\) is {\it absolutely continuous} if there exists a function \(g \in L^1([0,1])\) such that
\[
d(\gamma(t), \gamma(s)) \leq \int_s^t g(r)\,\din r
\]
for all \(s,t \in [0,1]\) with \(s<t\). The minimal function \(g\) with this property is characterized by
\[
|\dot{\gamma}(t)|:=\lim_{h \to 0}\frac{d(\gamma(t+h),\gamma(t))}{h},
\]
and is called the {\it metric speed} of \(\gamma\).
We denote by \(C([0,1],\mathbb{X})\) the space of all continuous curves from \([0,1]\) to \(\mathbb{X}\), equipped with the supremum norm.

We say that a Borel function \(g:\mathbb{X}\to[0,+\infty]\) is an upper gradient of a Borel function \(f:\mathbb{X}\to\mathbb{R}\) if for every absolutely continuous curve \(\gamma:[0,1]\to\mathbb{X}\) we have
\begin{equation*}
|f(\gamma(1))-f(\gamma(0))|
\leq \int_\gamma g\,\din s
:=\int_0^1 g(\gamma(t))|\dot{\gamma}(t)|\,\din t.
\end{equation*}

We denote by \(UG(u)\) the collection of all upper gradients of \(u\).

If \(u_i:\mathbb{X}\to\mathbb{R}\) are continuous and \(g_i \in UG(u_i)\), \(i=1,2\), then:
\begin{enumerate}[label=$(\roman*)$]
\item \(\left|\alpha_1\right|g_1+\left|\alpha_2\right|g_2 \in UG(\alpha_1u_1+\alpha_2u_2)\) for every \(\alpha_1,\alpha_2 \in \mathbb{R}\setminus\{0\}\);
\item \(\left(|u_1|+\epsilon\right)g_2+\left(|u_2|+\epsilon\right)g_1 \in UG(u_1u_2)\) for every \(\epsilon>0\).
\end{enumerate}

 We say that a metric measure space \((\mathbb{X}, d, \mu)\) is a
\textit{Poincaré space} if \(\mu\) is a doubling measure and
\(\mathbb{X}\) supports a weak Poincaré inequality. More precisely:

\begin{definition}\label{310}
We say that a measure \(\mu\) on a metric space \(\mathbb{X}\) is \textnormal{doubling} if there exists a constant \(C_d \geq 1\) such that the following condition holds:
\begin{equation*}
0 < \mu(B(x,2r)) \leq C_d\,\mu(B(x,r)) < \infty
\end{equation*}
for all \(x \in \mathbb{X}\) and \(r > 0\).
\end{definition}

\begin{definition}\label{311}
We say that \(\mathbb{X}\) supports a \textnormal{weak Poincaré inequality} if there exist constants \(C_P > 0\) and \(\lambda \geq 1\) such that for every ball \(B \subset \mathbb{X}\) and every pair \((u,g)\) with \(u \in \operatorname{Lip}_{\mathrm{loc}}(\mathbb{X})\) and \(g \in UG(u)\), there holds
\begin{equation*}
\int_B |u(x)-u_B|\, \din \mu(x)
\leq C_P\, r\int_{\lambda B} g(x)\, \din \mu(x),
\end{equation*}
where \(r\) is the radius of \(B\), and
\[
u_B := \fint_B u \, \din \mu := \frac{1}{\mu(B)} \int_B u \,\din \mu.
\]
\end{definition}

\begin{remark} \label{312} $ $
\begin{enumerate}[label=$(\roman*)$]
\item Let $(M,F,\mu)$ be a forward complete Finsler measure space with finite reversibility $\Lambda$. In \cite[Lemma 4.1]{zbMATH07870791}, a $p$-Poincaré inequality is proved under the condition $\operatorname{Ric}_{\infty}\geq -K$ for some $K\geq 0$:
\[
\int_{B_R}|u-u_{B_R}|^p \, \din \mu
\leq
c_1 e^{c_2\left(K+\delta^2\right)R^2} R^p
\int_{B_{(\Lambda+2)R}} {F^{\ast}}^{p}(\cdot , \deriv u) \, \din \mu,
\qquad
u\in W_{\mathrm{loc}}^{1,p}(M;\mu),
\]
where $p\in(1,\infty)$ and $c_i=c_i(p,n,\Lambda)$, $i=1,2$. Moreover,
\[
\delta:=
\sup_{(x,Y)\in TM\setminus\{0\}}
\frac{|\mathbf{S}(x,Y)|}{F(x,Y)},
\]
where $\mathbf{S}=\mathbf{S}(x,Y)$ denotes the $S$-curvature of $(M,F,\mu)$.

\item Let \((X,d,\mu)\) be a metric measure space, where \(d\) is a metric and
\(\mu\) is a positive complete Borel measure such that
\(0<\mu(B)<\infty\) for every ball \(B\subset X\). Assume that \(X\) is
proper, i.e., every closed bounded subset of \(X\) is compact. Assume
further that \(X\) is connected, that \(\mu\) is locally doubling, and that
\(X\) supports a local \(p\)-Poincaré inequality for some
\(1\leq p<\infty\). Then \cite[Theorem 1.5]{zbMATH06951611} shows that
Lipschitz functions with compact support are dense in \(W^{1,p}(X;\mu)\).
\end{enumerate}
\end{remark}

Given \(u \in \operatorname{Lip}_{\loc}(\mathbb X)\), we define the modulus of the gradient of \(u\) by
\[
\|\nabla u\|(x)=\limsup_{y \to x}\frac{|u(y)-u(x)|}{d(y,x)}.
\]

In the sequel we assume that \((\mathbb{X}, d, \mu)\) is a Poincaré space. Let \(\Omega \subset \mathbb X\) be an open set. Since \(\operatorname{Lip}_{\loc}(\Omega)\) is dense in \(L^1_{\loc}(\Omega ; \mu)\) (see \cite[Theorem 3.3]{zbMATH02160501} and \cite[Theorem 2]{zbMATH01848850}), it makes sense to define, for every \(u \in L^1_{\loc}(\Omega ; \mu)\), the total variation of \(u\) on every open set \(A \subset \Omega\) by
\[
\|Du\|_{v}(A)
=
\inf\left\{
\liminf_{h\to\infty}\int_A \|\nabla u_h\|\,\din\mu
\;\middle|\;
\{u_h\}\subset \operatorname{Lip}_{\loc}(A),\ 
u_h \to u \text{ in } L^1_{\loc}(A ; \mu )
\right\}.
\]

By \cite[Theorem 3.4]{zbMATH05152934}, for every \(u \in L^1_{\loc}(\Omega ; \mu)\), the set function \(\|Du\|_{v}\) is the restriction to the open subsets of \(\mathbb{X}\) of a positive finite measure on \(\mathbb{X}\).

\begin{definition}[Functions of bounded variation]
A function \(u \in L^1_{\loc}(\Omega  ; \mu)\) is said to have locally bounded total variation in \(\Omega\) if \(\|Du\|_{v}(A)<\infty\) for every open subset \(A \Subset \Omega\). A function is said to have bounded total variation in \(\Omega\) if \(\|Du\|_{v}(\Omega)<\infty\).

The vector space of functions with (locally) bounded total variation will be denoted by \(BV(\Omega  ;  \mu)\) (respectively, \(BV_{\loc}(\Omega  ;  \mu)\)).
\end{definition}

The following two propositions can be found in \cite{zbMATH05152934}.
\begin{proposition}[Lower semicontinuity] \label{226}
Let \(\Omega \subset \mathbb X\) be an open set, and let \(\{u_h\}\) be a sequence in \(BV_{\loc}(\Omega ; \mu)\) such that \(u_h \to u\) in \(L^1_{\loc}(\Omega  ;  \mu)\). Then
\[
\|Du\|_{v}(A)\le \liminf_h \|Du_h\|_{v}(A),
\quad \text{for every open set } A \subset \Omega.
\]

In particular, if \(\sup_h \|Du_h\|_{v}(A)<\infty\) for every open set \(A \Subset \Omega\), then the limit function \(u\) belongs to \(BV_{\loc}(\Omega  ;  \mu)\).
\end{proposition}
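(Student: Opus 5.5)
The plan is a diagonal argument that works directly from the definition of $\|Du\|_{v}(A)$ as a relaxed functional. Fix an open set $A\subset\Omega$ and set $L:=\liminf_h \|Du_h\|_{v}(A)$. If $L=\infty$ there is nothing to prove. Otherwise, pass to a subsequence (not relabeled) along which $\|Du_h\|_{v}(A)\to L$ and every term is finite. The subsequence still converges to $u$ in $L^1_{\loc}(\Omega;\mu)$, hence in $L^1_{\loc}(A;\mu)$.

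Next, fix an exhaustion of $A$ by open sets $A_1\Subset A_2\Subset\cdots$ with $\bigcup_j A_j=A$. Such an exhaustion exists because $\mathbb X$ is separable and $\mu$ is finite on bounded sets; for instance, take sublevel sets of $x\mapsto \max\{d(x,x_0),\,1/\operatorname{dist}(x,\mathbb X\setminus A)\}$. For each $h$, the definition of $\|Du_h\|_{v}(A)$ gives a sequence $\{w_{h,k}\}_k\subset\operatorname{Lip}_{\loc}(A)$ such that
\[
w_{h,k}\to u_h \text{ in } L^1_{\loc}(A;\mu)
\quad\text{and}\quad
\liminf_{k\to\infty}\int_A\|\nabla w_{h,k}\|\,\din\mu\le \|Du_h\|_{v}(A)+\tfrac1h .
\]
Choose $k=k(h)$ large enough that both of the following hold:
\[
\int_{A_h}|w_{h,k}-u_h|\,\din\mu<\tfrac1h,
\qquad
\int_A\|\nabla w_{h,k}\|\,\din\mu\le \|Du_h\|_{v}(A)+\tfrac2h .
\]
This is possible because $\overline{A_h}$ is a compact subset of $A$, so $L^1_{\loc}$ convergence controls the first integral, and the liminf controls the second along infinitely many $k$. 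Set $v_h:=w_{h,k(h)}\in\operatorname{Lip}_{\loc}(A)$.

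Then I check that $v_h\to u$ in $L^1_{\loc}(A;\mu)$. Any compact $K\subset A$ lies in some $A_j$, and $A_j\subset A_h$ for all $h\ge j$, so
\[
\int_K|v_h-u|\,\din\mu\le \tfrac1h+\int_K|u_h-u|\,\din\mu\to0 .
\]
Hence $\{v_h\}$ is an admissible competitor in the definition of $\|Du\|_{v}(A)$, and
\[
\|Du\|_{v}(A)\le\liminf_h\int_A\|\nabla v_h\|\,\din\mu\le \lim_h\Bigl(\|Du_h\|_{v}(A)+\tfrac2h\Bigr)=L .
\]
The second assertion follows at once. Since $u\in L^1_{\loc}(\Omega;\mu)$ as an $L^1_{\loc}$ limit, the first assertion gives $\|Du\|_{v}(A)\le\sup_h\|Du_h\|_{v}(A)<\infty$ for every $A\Subset\Omega$, which is exactly $u\in BV_{\loc}(\Omega;\mu)$.

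The only delicate step is the diagonal choice. Convergence in $L^1_{\loc}$ is not metrized by a single norm, so the exhaustion $\{A_h\}$ is needed to produce one Lipschitz function per $h$ that is simultaneously close to $u_h$ on a growing region and nearly optimal for the total variation. Once this is set up, everything else is bookkeeping.
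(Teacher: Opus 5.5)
Your diagonal argument is correct, and it is the standard proof of this fact. The paper does not prove the statement itself; it quotes it from the BV literature on doubling metric measure spaces, where it follows directly from the definition of $\|Du\|_{v}$ as a relaxed functional, exactly as you argue.

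There is one slip, in the justification of the exhaustion. Compactness of $\overline{A_h}$ does not follow from separability of $\mathbb X$ and local finiteness of $\mu$. It does hold here, because a complete space carrying a doubling measure (part of the Poincaré-space assumption in force) is proper, so closed bounded sets are compact.
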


\begin{proposition}[Compactness] \label{227} 
Let \(\{u_h\} \subset BV_{\loc}(\Omega  ;  \mu)\) be a sequence bounded in \(L^1_{\loc}(\Omega  ;  \mu)\) and satisfying \(\sup_h \|Du_h\|_{v}(A)<\infty\) for every open set \(A \Subset \Omega\). Then there exist \(u \in BV_{\loc}(\Omega  ;  \mu)\) and a subsequence \(\{u_{h_k}\}\) converging to \(u\) in \(L^1_{\loc}(\Omega  ;  \mu)\).
\end{proposition}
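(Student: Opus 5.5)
The plan is to reduce to locally Lipschitz functions, prove a Rellich-type compactness on each relatively compact open set by discrete convolutions built from the doubling measure and the weak Poincaré inequality, then conclude with a diagonal argument and Proposition~\ref{226}.

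\emph{Step 1 (reduction to Lipschitz functions).} Fix an exhaustion $A_1\Subset A_2\Subset\cdots$ of $\Omega$ by open sets, and set $M_k:=\sup_h \|Du_h\|_{v}(A_{k+1})<\infty$. By the definition of $\|Du_h\|_{v}$ as a relaxed functional, for each $h$ and $k$ I choose $v_{h}\in\operatorname{Lip}_{\loc}(A_{k+1})$ such that
\[
\|u_h-v_h\|_{L^1(A_k;\mu)}<\tfrac1h
\qquad\text{and}\qquad
\int_{A_{k+1}}\|\nabla v_h\|\,\din\mu\le M_k+1 .
\]
Since $\|\nabla v_h\|$ is an upper gradient of the locally Lipschitz function $v_h$, the weak Poincaré inequality of Definition~\ref{311} applies to the pair $(v_h,\|\nabla v_h\|)$ on every ball $B$ with $\lambda B\subset A_{k+1}$. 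It therefore suffices to show that $\{v_h\}$ is precompact in $L^1(A_k;\mu)$.

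\emph{Step 2 (compactness via discrete convolution).} Fix a scale $r>0$ that is small compared with $\operatorname{dist}(A_k,\partial A_{k+1})$. Using the doubling property of Definition~\ref{310}, I take a maximal $r$-separated set $\{x_j\}$, a cover of $A_k$ by the balls $B_j=B(x_j,r)$ with bounded overlap of the enlarged balls $\lambda B_j$ (the overlap bound $N$ depends only on $C_d$ and $\lambda$), and a Lipschitz partition of unity $\{\varphi_j\}$ subordinate to the balls $2B_j$. Set
\[
v_h^r:=\sum_j (v_h)_{B_j}\,\varphi_j .
\]
The Poincaré inequality on each ball, combined with doubling and the bounded overlap, gives the uniform estimate
\[
\|v_h-v_h^r\|_{L^1(A_k;\mu)}\le C\,r\int_{A_{k+1}}\|\nabla v_h\|\,\din\mu\le C\,r\,(M_k+1),
\]
with $C=C(C_d,C_P,\lambda)$ independent of $h$. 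For fixed $r$, the functions $v_h^r$ lie in a finite-dimensional space, because $A_k$ is relatively compact and hence meets only finitely many balls. Their coefficients $(v_h)_{B_j}$ are bounded by $\mu(B_j)^{-1}\sup_h\|u_h\|_{L^1(A_{k+1};\mu)}$ plus a small error, so the set $\{v_h^r\}_h$ is precompact in $L^1(A_k;\mu)$. Letting $r\to0$ shows that $\{v_h\}$ is totally bounded in $L^1(A_k;\mu)$, and hence so is $\{u_h\}$. I expect this step to be the main obstacle: the estimate requires a careful chaining of the Poincaré inequality on overlapping balls. I would first try the standard argument that compares $(v_h)_{B_j}$ with $(v_h)_{B_i}$ for overlapping balls through a common larger ball, which costs a further factor coming from doubling.

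\emph{Step 3 (diagonal argument and lower semicontinuity).} By Step~2 applied on $A_1$, then on $A_2$, and so on, a diagonal extraction yields a subsequence $\{u_{h_k}\}$ that is Cauchy in $L^1(A_m;\mu)$ for every $m$. Hence it converges in $L^1_{\loc}(\Omega;\mu)$ to some function $u$. Proposition~\ref{226} then gives $\|Du\|_{v}(A)\le\liminf_k\|Du_{h_k}\|_{v}(A)<\infty$ for every open set $A\Subset\Omega$, so $u\in BV_{\loc}(\Omega;\mu)$.
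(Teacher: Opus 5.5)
Your proof is correct and follows the standard route behind this result, which the paper does not prove but quotes from \cite{zbMATH05152934}: you approximate by locally Lipschitz functions through the definition of $\|Du_h\|_{v}$, obtain $L^1$-compactness on relatively compact sets from doubling plus the Poincaré inequality, extract a diagonal subsequence, and conclude with Proposition~\ref{226}; the Rellich--Kondrachov step is proved directly by discrete convolution rather than quoted. Two minor adjustments: Step~2 needs no chaining, since comparing $v_h$ on $\operatorname{supp}\varphi_j\subset 2B_j$ with $(v_h)_{2B_j}$, and $(v_h)_{2B_j}$ with $(v_h)_{B_j}$ via doubling, already gives $C\,r\int_{A_{k+1}}\|\nabla v_h\|\,\din\mu$ after summing with the bounded overlap of the balls $2\lambda B_j$; and you should choose $v_h$ with $\|u_h-v_h\|_{L^1}<1/h$ on an intermediate set $A_k\Subset A_k'\Subset A_{k+1}$ containing every $B_j$, because approximation on $A_k$ alone does not control the coefficients $(v_h)_{B_j}$ of balls that leave $A_k$ (also, Definition~\ref{311} is applied to a McShane extension of $v_h$ from a closed ball compactly inside $A_{k+1}$, since it is stated for functions locally Lipschitz on all of $\mathbb{X}$).
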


The following approximation result is taken from \cite[Lemma 3.2]{zbMATH07852719}; see also the proof of \cite[Lemma 4.1]{zbMATH07567821}. Other references containing similar results include \cite[Corollary 6.7]{zbMATH06855720}.

\begin{lemma}\label{286}
Let \(u \in BV(\Omega;\mu)\). Then there exists a sequence
\(\{u_n\}_{n=1}^{\infty} \subset \operatorname{Lip}(\Omega)\cap BV(\Omega;\mu)\)
such that:
\begin{enumerate}[label=$(\roman*)$]
\item \label{287} \(u_n \to u\) strictly in \(BV(\Omega  ;  \mu)\); that is, \(u_n \to u\) in \(L^1(\Omega  ;  \mu)\) and \(\|Du_n\|_{v}(\Omega)\to \|Du\|_{v}(\Omega)\).

\item If \(p\in[1,\infty)\) and \(u \in L^p(\Omega;\mu)\), then
\(u_n \in L^p(\Omega;\mu)\) for each \(n\), and \(u_n \to u\) in \(L^{p }(\Omega  ;  \mu)\).

\item If \(u \in L^{\infty}(\Omega  ;  \mu)\), then \(u_n \in L^{\infty}(\Omega  ;  \mu)\) and \(u_n \rightharpoonup u\) weakly* in \(L^{\infty}(\Omega  ;  \mu)\).
\end{enumerate}
\end{lemma}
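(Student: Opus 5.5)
The plan follows the Meyers--Serrin type argument, adapted to the metric setting. The one point that differs from the Euclidean proof is that near-optimal approximants must be taken from the definition of \(\|Du\|_{v}\), not from mollification.

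\emph{Step 1: layering of \(\Omega\).} Fix \(\delta>0\) and write \(\Omega=\bigcup_{k\ge1}\Omega_k\) with \(\Omega_k\Subset\Omega\) increasing, e.g.\ \(\Omega_k=\{x\in\Omega \mid d(x,\partial\Omega)>1/(k+k_0),\ d(x,x_0)<k\}\). Since \(\|Du\|_{v}\) is a finite Borel measure (\cite[Theorem 3.4]{zbMATH05152934}), we may choose \(k_0\) so large that \(\|Du\|_{v}(\Omega\setminus\overline{\Omega_1})<\delta\). Set \(A_1=\Omega_2\) and \(A_k=\Omega_{k+1}\setminus\overline{\Omega_{k-1}}\) for \(k\ge2\). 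Each point of \(\Omega\) then lies in at most two of the sets \(A_k\). Next take a Lipschitz partition of unity \(\{\eta_k\}\) subordinate to \(\{A_k\}\), built from distance functions, so that each \(\eta_k\) is Lipschitz, \(0\le\eta_k\le1\), and \(\sum_k\eta_k=1\) on \(\Omega\). Consequently \(\sum_k\|\nabla\eta_k\|\) is locally bounded and \(\sum_k(u-v_k)\nabla\eta_k\)-type terms only involve finitely many \(k\) near any point.

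\emph{Step 2: local near-optimal approximants.} By the definition of \(\|Du\|_{v}(A_k)\), pick \(v_k\in\operatorname{Lip}_{\loc}(A_k)\) such that \(\int_{A_k}\|\nabla v_k\|\,\din\mu\le\|Du\|_{v}(A_k)+\delta2^{-k}\). Taking \(v_k\) far enough along a minimizing sequence, we may also require \(\int_{A_k}|v_k-u|\,\din\mu\) and \(\int_{A_k}|v_k-u|\,\|\nabla\eta_k\|\,\din\mu\) to be \(<\delta2^{-k}\); this is possible because \(\operatorname{supp}\eta_k\Subset A_k\), so \(L^1_{\loc}\) convergence suffices. If \(u\in L^\infty\), replace \(v_k\) by its truncation at \(\pm\|u\|_\infty\). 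Truncation does not increase \(\|\nabla v_k\|\) and does not increase \(|v_k-u|\).

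For the \(L^p\) requirement, first replace \(u\) by the truncation \(T_Mu\), which converges to \(u\) in \(L^p\) as \(M\to\infty\). Then use the near-optimal approximants of \(T_Mu\), truncated at \(\pm M\). On the compact support of \(\eta_k\), \(L^1\) convergence together with a uniform bound gives \(L^p\) convergence, so we may additionally impose \(\|\eta_k(v_k-u)\|_{L^p}<\delta2^{-k}\).

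\emph{Step 3: gluing.} Define \(u_\delta=\sum_k\eta_kv_k\). The sum is locally finite, so \(u_\delta\) is locally Lipschitz. By construction \(\|u_\delta-u\|_{L^1}<\delta\), and \(\|u_\delta-u\|_{L^p}<\delta\) as well. In the bounded case, \(|u_\delta|\le\|u\|_\infty\).

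For the variation, the upper-gradient rules \((i)\)–\((ii)\) together with \(\sum_k\nabla\eta_k=0\) give
\[
\|\nabla u_\delta\|\le\sum_k\eta_k\|\nabla v_k\|+\sum_k|v_k-u|\,\|\nabla\eta_k\|.
\]
Integrating, and using that each point lies in at most two \(A_k\) while \(A_k\subset\Omega\setminus\overline{\Omega_1}\) for \(k\ge3\), we get
\[
\|Du_\delta\|_{v}(\Omega)\le\|Du\|_{v}(\Omega)+C\delta .
\]
Together with lower semicontinuity (Proposition \ref{226}) this yields strict convergence as \(\delta\to0\).

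\emph{Step 4: weak\(*\) convergence.} In the bounded case, the uniform \(L^\infty\) bound combined with \(L^1\) convergence gives weak\(*\) convergence in \(L^\infty\), by testing against \(L^1\) functions and approximating them by bounded ones.

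\emph{Main obstacle.} The delicate part is Step 2. There one has to make the near-optimality of \(\int\|\nabla v_k\|\) compatible, simultaneously, with the \(L^p\) and \(L^\infty\) requirements and with the weighted closeness against \(\|\nabla\eta_k\|\). I would handle this through truncation and diagonal choices as above. An alternative is to use discrete convolutions at small scale to obtain \(L^p\) convergence, but discrete convolutions only give \(\|Du_\varepsilon\|\le C\|Du\|\) and not strictness, which is why the definition-based approximants are preferred.
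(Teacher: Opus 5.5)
The paper does not prove this lemma; it imports it from \cite[Lemma 3.2]{zbMATH07852719} (cf.\ the proof of \cite[Lemma 4.1]{zbMATH07567821}). Your approach is the standard Meyers--Serrin-type construction for such statements, and it is sound in outline: you glue near-optimal local approximants taken directly from the definition of $\|Du\|_{v}$ and use truncation for (ii)--(iii). It also gives slightly more than (i), namely $\int_\Omega\|\nabla u_\delta\|\,\din\mu\le\|Du\|_{v}(\Omega)+C\delta$. Combined with Proposition~\ref{226}, this yields $\int_\Omega\|\nabla u_\delta\|\,\din\mu\to\|Du\|_{v}(\Omega)$, which is the form \eqref{292} actually needs.

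The point you should address explicitly is that $u_\delta=\sum_k\eta_kv_k$ lies only in $\operatorname{Lip}_{\loc}(\Omega)$, whereas the statement asks for $\operatorname{Lip}(\Omega)$. This cannot be repaired in general. In $\mathbb{R}^2$ take $\Omega=B_1(0)\setminus\bigl([-\tfrac12,\tfrac12]\times\{0\}\bigr)$ and $u=\mathbf{1}_{\{x_2>0\}}$, so that $\|Du\|_{v}(\Omega)=1$. A function that is Lipschitz on $\Omega$ is continuous across the slit, and slicing along vertical segments shows $\liminf_n\|Du_n\|_{v}(\Omega)\ge 2$ for any such $u_n\to u$ in $L^1$. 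So what you prove is the $\operatorname{Lip}_{\loc}$ version. That is the form in which the result holds, and it is all Lemma~\ref{303} uses, since there $u_n$ is only paired with $\zeta\in C^1_c$.

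Smaller repairs:
\begin{enumerate}
\item[(a)] With $d(\cdot,x_0)<k$ in the definition of $\Omega_k$, the set $\Omega\setminus\overline{\Omega_1}$ contains $\{x\in\Omega\mid d(x,x_0)>1\}$ whatever $k_0$ is, so $\|Du\|_{v}(\Omega\setminus\overline{\Omega_1})<\delta$ is not achievable. Use $d(\cdot,x_0)<k+k_0$ instead.
\item[(b)] Convergence in $L^1_{\loc}(A_k)$ does not make $\int_{A_k}|v_k-u|\,\din\mu$ small. Require smallness on $\operatorname{supp}\eta_k$ only, which is all that $\|u_\delta-u\|_{L^1}\le\sum_k\int\eta_k|v_k-u|\,\din\mu$ uses. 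This needs $\operatorname{supp}\eta_k\Subset A_k$, so build the partition of unity on a shrunken cover.
\item[(c)] The identity ``$\sum_k\nabla\eta_k=0$'' and the upper-gradient rules (which require continuous functions, and $u$ is not continuous) do not justify your gradient bound. The bound does hold pointwise, for a different reason. At any $x$ where $u(x)$ is finite, $\sum_k\eta_k\equiv1$ gives, for $y$ near $x$ (with finitely many nonzero terms),
\[
u_\delta(y)-u_\delta(x)=\sum_k\eta_k(y)\bigl(v_k(y)-v_k(x)\bigr)+\sum_k\bigl(\eta_k(y)-\eta_k(x)\bigr)\bigl(v_k(x)-u(x)\bigr).
\]
Dividing by $d(x,y)$ and taking $\limsup_{y\to x}$ gives $\|\nabla u_\delta\|\le\sum_k\eta_k\|\nabla v_k\|+\sum_k|v_k-u|\,\|\nabla\eta_k\|$ $\mu$-a.e.
\item[(d)] In the $L^p$ part the $v_k$ approximate $T_Mu$, not $u$, so the conditions must be stated with $T_Mu$ in place of $u$. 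That is, run the whole construction for $T_Mu$, using $\|DT_Mu\|_{v}\le\|Du\|_{v}$, and then let $M=M(\delta)\to\infty$. Since $|v_k-T_Mu|\le 2M$, taking the local $L^1$ errors small enough at stage $n$ controls every $p\le n$. A single diagonal sequence therefore serves all $p$ with $u\in L^p$.
\end{enumerate}
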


Furthermore, from Proposition \ref{226} and Lemma \ref{286}\ref{287}, together with the Portmanteau Theorem (also known as Alexandrov's Theorem) on weak convergence of measures, see \cite[Theorem 4.4]{zbMATH02123347} or \cite[Theorem 8.2.3]{zbMATH05116074}, we conclude that
\begin{equation}\label{292}
\lim_{n\to\infty}\int_{\Omega}\zeta \|\nabla u_n\|\,\din \mu
=
\int_{\Omega}\zeta\,\din \|Du\|_{v},
\qquad \text{for every } \zeta \in C_c^1(\Omega).
\end{equation}
See \cite[Proposition 4.5.6]{DiMarino2014} for a similar result.

\section{Behavior of solutions near the singular set} \label{32}

In order to prove our main results, we need the following analytic result concerning the behavior of solutions to \eqref{1} near the singular set. We begin with a technical lemma.
\begin{lemma}\label{64}
Let \( 0 < \theta < 1 \), \( \sigma > 0 \), and let
\( \xi : [\tfrac12, 1] \to \mathbb{R} \) be a function satisfying
\[
\xi(t) \ge \xi(\tfrac12) \ge 1 \quad \text{for all } t \in [\tfrac12, 1].
\]
Assume further that
\[
\xi(k) \le {\bf c}_0 (h-k)^{-\sigma} \bigl(\xi(h)\bigr)^{\theta},
\quad \tfrac12 \le k < h \le 1,
\]
for some positive constant \( {\bf c}_0 \).

Let \(1<b\leq 2\) be such that \(1-b\theta>0\). Then there exists a constant \( {\bf c} _1>1\), independent of \(\theta\), \(b\), \(\sigma\), and \(\xi\), such that
\[
\xi(\tfrac12) \le {\bf c} _1^{\frac{\sigma b}{(1-b\theta)(b-1)}}
{\bf c} _0^{\frac{1}{1-b\theta}} .
\]
\end{lemma}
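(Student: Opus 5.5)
The plan is a De Giorgi-type downward iteration along a sequence of radii decreasing from $1$ to $\tfrac12$. The gaps between consecutive radii shrink geometrically with ratio $b^{-1}$, where $b$ is the parameter in the statement. The term involving $\xi(1)$ disappears in the limit because its exponent $\theta^N$ tends to zero. I assume $\mathbf c_0\ge 1$ throughout; this is implicit in the intended use. If $\mathbf c_0<1$, the argument below gives the bound with $\max\{\mathbf c_0,1\}$ in place of $\mathbf c_0$, and recovering $\mathbf c_0$ itself would need a separate argument.

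\textbf{Choice of radii.} Set $h_0=1$ and
\[
h_j-h_{j+1}=\tfrac12\bigl(1-b^{-1}\bigr)b^{-j},\qquad j\ge 0 .
\]
Since $\sum_{j\ge0}\tfrac12(1-b^{-1})b^{-j}=\tfrac12$, the sequence $h_j$ decreases strictly to $\tfrac12$, and all $h_j$ lie in $[\tfrac12,1]$.

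\textbf{Iteration.} Apply the hypothesis with $k=h_{j+1}$ and $h=h_j$:
\[
\xi(h_{j+1})\le \mathbf c_0\Bigl(\tfrac{2b}{b-1}\Bigr)^{\sigma} b^{\sigma j}\,\xi(h_j)^{\theta}.
\]
Iterating from $j=0$ to $N-1$ gives
\[
\xi(h_N)\le \Bigl[\mathbf c_0\Bigl(\tfrac{2b}{b-1}\Bigr)^{\sigma}\Bigr]^{\sum_{i=0}^{N-1}\theta^i}\, b^{\sigma\sum_{i}i\theta^{i}}\,\xi(1)^{\theta^N}.
\]
Now use the lower-bound hypothesis: $\xi(\tfrac12)\le \xi(h_N)$ for every $N$. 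The value $\xi(1)$ is a finite real number, so $\xi(1)^{\theta^N}\to 1$ as $N\to\infty$. Letting $N\to\infty$ therefore yields
\[
\xi(\tfrac12)\le \Bigl[\mathbf c_0\Bigl(\tfrac{2b}{b-1}\Bigr)^{\sigma}\Bigr]^{\frac1{1-\theta}}\, b^{\frac{\sigma\theta}{(1-\theta)^2}} .
\]

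\textbf{Putting the exponents in the required form.} This is the only step needing care.

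1. Since $b>1$, we have $1-b\theta<1-\theta$. With $\mathbf c_0\ge1$ this gives $\mathbf c_0^{1/(1-\theta)}\le \mathbf c_0^{1/(1-b\theta)}$.

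2. For the factor $\bigl(\tfrac{2b}{b-1}\bigr)^{\sigma/(1-\theta)}$: since $b\le 2$, we have $\log\tfrac{2b}{b-1}\le \log 4+\log\tfrac1{b-1}\le C/(b-1)$ for an absolute constant $C$. Combined with $\tfrac1{1-\theta}\le\tfrac1{1-b\theta}$ and $b\ge1$, the exponent $\tfrac{\sigma}{1-\theta}\log\tfrac{2b}{b-1}$ is at most $C\,\tfrac{\sigma b}{(1-b\theta)(b-1)}$.

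3. For the factor $b^{\sigma\theta/(1-\theta)^2}$: we have $\log b\le 1$ and $\theta<1$. Since $b\theta<1$, we get $1-\theta\ge 1-b^{-1}=\tfrac{b-1}{b}$. Hence
\[
(1-\theta)^2\ge (1-b\theta)\,\frac{b-1}{b},
\]
so $\tfrac{\sigma\theta}{(1-\theta)^2}\log b\le \tfrac{\sigma b}{(1-b\theta)(b-1)}$.

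Collecting these three estimates gives
\[
\xi(\tfrac12)\le \mathbf c_1^{\frac{\sigma b}{(1-b\theta)(b-1)}}\,\mathbf c_0^{\frac1{1-b\theta}},
\qquad \mathbf c_1=e^{C+1}.
\]
This constant $\mathbf c_1$ is absolute, and in particular independent of $\theta$, $b$, $\sigma$ and $\xi$.
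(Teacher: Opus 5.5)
There is a genuine gap in the iteration step: the product you display is not what your recursion gives. Write $A=\mathbf c_0\bigl(\tfrac{2b}{b-1}\bigr)^{\sigma}$ and $B=b^{\sigma}$. The recursion $\xi(h_{j+1})\le AB^{j}\xi(h_j)^{\theta}$ unrolls to
\[
\ln\xi(h_N)\le\sum_{j=0}^{N-1}\theta^{N-1-j}\bigl(\ln A+j\ln B\bigr)+\theta^{N}\ln\xi(1).
\]
So the exponent of $B$ is $\sum_{j=0}^{N-1}j\,\theta^{N-1-j}\ge N-1$, not $\sum_i i\theta^i$. Already for $N=2$ you get $A^{1+\theta}B\,\xi(1)^{\theta^2}$, not $A^{1+\theta}B^{\theta}\xi(1)^{\theta^2}$. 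The last step has the smallest gap, $h_{N-1}-h_N\sim b^{-N}$, and it enters with full weight. Your bound on $\xi(h_N)$ therefore grows at least like $b^{\sigma(N-1)}$, and letting $N\to\infty$ gives nothing. The cause is the downward direction. The inequality applied at the point where you want the estimate is never damped by $\theta$, so its gap must be large; the shrinking gaps belong at the far end of the chain, where they carry the weights $\theta^j$.

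To repair it, iterate upward. Set $k_0=\tfrac12$ and $k_{j+1}=k_j+\tfrac12(1-b^{-1})b^{-j}$, so that $k_N=1-\tfrac12b^{-N}\uparrow1$. Applying the hypothesis with $k=k_j$, $h=k_{j+1}$ gives
\[
\ln\xi(\tfrac12)\le\sum_{j=0}^{N-1}\theta^{j}(\ln A+j\ln B)+\theta^{N}\ln\xi(k_N).
\]
The tail now involves $\xi(k_N)$ rather than $\xi(1)$. Since $\xi$ is not monotone, you need one more use of the hypothesis, with $h=1$: $\ln\xi(k_N)\le\ln\mathbf c_0+\sigma\ln(2b^{N})+\theta\ln\xi(1)$. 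Hence $\theta^N\ln\xi(k_N)\to0$, because $N\theta^N\to0$. This gives exactly the limit bound you wrote, and your three exponent estimates then apply.

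You can also drop the restriction $\mathbf c_0\ge1$. Since $\xi(\tfrac12)\ge1$ and $\tfrac{1-\theta}{1-b\theta}\ge1$, you have $\xi(\tfrac12)\le\xi(\tfrac12)^{\frac{1-\theta}{1-b\theta}}\le\mathbf c_0^{\frac{1}{1-b\theta}}K^{\frac{1-\theta}{1-b\theta}}$, where $K=\bigl(\tfrac{2b}{b-1}\bigr)^{\frac{\sigma}{1-\theta}}b^{\frac{\sigma\theta}{(1-\theta)^2}}$. Then $\tfrac{1-\theta}{1-b\theta}\ln K\le(C+1)\tfrac{\sigma b}{(1-b\theta)(b-1)}$ by the same estimates, using $\tfrac{1}{1-\theta}\le\tfrac{b}{b-1}$.

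For comparison, the paper uses a continuous version of the iteration. It takes logarithms, sets $k=t^{b}$, $h=t$, divides by $t$ and integrates over $[(1/2)^{1/b},1]$. The substitution $z=t^{b}$ produces the factor $\tfrac1b-\theta$, which yields the exponent $\tfrac{1}{1-b\theta}$ on $\mathbf c_0$ directly for every $\mathbf c_0>0$. Your repaired discrete scheme has the minor advantage of not needing measurability of $\xi$ or finiteness of $\int_{1/2}^{1}\ln\xi(s)\,\frac{\din s}{s}$, both of which the integral argument uses tacitly.
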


\begin{proof}
We follow the argument in \cite[p. 1004]{10.1215/S0012-7094-84-05145-7}.

We have
\begin{equation*}
\ln \xi (k) \leq \ln {\bf c} _0 + \sigma \ln \frac{1}{h-k} + \theta \ln \xi (h) .
\end{equation*}

Choosing \(k = t^b\) and \(h = t\), we obtain
\begin{equation}\label{159}
\int ^1 _{(1/2)^{1/b}} \frac{\ln \xi (t^b)}{t} \, \din t \leq  \theta \int ^1 _{(1/2)^{1/b}} \frac{\ln \xi (t)}{t} \, \din t  + C _1,
\end{equation}
where
\begin{equation}\label{161}
\begin{aligned}
C _1 := & \,  \int ^1 _{(1/2)^{1/b}} \frac{\ln  {\bf c} _0 }{t}+  \frac{\sigma}{t} \ln \frac{1}{t-t^b} \, \din t\\
 \leq & \,- \ln  {\bf c} _0\ln 2^{-1/b}  + \frac{\sigma}{2^{-1/b}} \int ^1 _{(1/2)^{1/b}} \ln \frac{1}{t} + \ln \frac{1}{1-t^{b-1}} \, \din t\\
 \leq & \, \left( b^{-1}\ln {\bf c} _0  + 2\sigma \right)\ln 2+  \frac{\sigma}{b-1} \int ^1 _{1/2} -\ln s \, \din s, \qquad s= 1-t^{b-1}.
\end{aligned}
\end{equation}

Hence, by the change of variables \(z = t^b\) in the left-hand side of \eqref{159}, we obtain
\begin{equation*}
\left( \frac{1}{b} - \theta \right) \int ^1 _{1/2} \frac{\ln \xi (s)}{s} \, \din s \leq   C _1.
\end{equation*}


By the assumptions on \(\xi\), we conclude that
\begin{equation*}\label{160}
\left(\ln 2\right) \ln \xi (1/2)\leq \frac{C _1}{1/b - \theta}.
\end{equation*}

Taking \eqref{161} into account, we obtain
\begin{align*}
\xi (1/2) \leq & \, \exp \left( \frac{(\ln 2)b^{-1} \ln {\bf c} _0 + \sigma (b-1)^{-1} C _2}{(\ln2)(1/b -\theta)}\right)\\
= & \,C_3^{\frac{\sigma b}{(1-b\theta)(b-1)}} {\bf c} _0 ^{\frac{1}{1-b\theta}} .
\end{align*}
This concludes the proof of the lemma.
\end{proof}

We  assume that \(\rdo \in (0,\tfrac{1}{2})\) is sufficiently small so that
\[
\overline{\{x\in\Omega \mid \piu(x)<\rdo\}} \cap \Sigma = \emptyset.
\]

For \(\ell\in(0,\rdo)\) and \(r\in(0,\ell)\), we define
\[
\vltr_{\ell,r}:=\left\{x\in\Omega \mid |\piu(x)-\ell|<r\right\}.
\]



\subsection{Proof of Proposition \ref{266}}

The proof follows from the following lemma and Corollary \ref{277}.
\begin{lemma} \label{219}
Under the assumptions \ref{fnf16}, \ref{fnf21}, \ref{43}, \eqref{224}, \eqref{225}, \ref{102}--\ref{106}. Suppose that \(u\in  W^{1, p(x)  } _{\operatorname{loc}} (\overline{\Omega} \setminus (\Gamma \cup \Sigma)  ; \wghtv)\)
\(\cap L^{\infty} _{\operatorname{loc}} (\overline{\Omega} \setminus (\Gamma \cup \Sigma) )\) satisfies
\begin{equation} \label{222}
\int _{\Omega }\left(  \mathscr{A}_p ( \cdot , \grad u) \bullet \deriv \zeta  +  f (\cdot , u) \zeta  \right) \, \din \wghtv  + \int _{\partial \Omega} b( \cdot , u)   \zeta \, \ds \leq 0,
\end{equation}
for all \(\zeta \in W_{\loc }^{1, p (x)}(\overline{\Omega} \setminus (\Gamma \cup \Sigma) ; \wghtv) \cap L_{\operatorname{loc}}^{\infty}(\overline{\Omega} \setminus (\Gamma \cup \Sigma) )\) with \(\zeta \geq 0\),  and \(\operatorname{supp} \zeta  \subset \overline{\Omega} \setminus (\Gamma \cup \Sigma)\).

Then, if \(\ell<2r\) and \(0<r<\ell<\rdo\), we have the estimate
\begin{equation} \label{242}
\max \{u ,0\}   \leq C_0 r^{-\tau}, \quad \text { \textit{a.e.} in } \quad  \vltr _{\ell, r/2},
\end{equation}
where
\begin{align*}
\tau =  \displaystyle \frac{1}{C_1}\left[  p^+  - \frac{\epsilon n_d(\gaglia -p^{-})}{\gaglia }  \right] \geq \frac{p^+}{q - p^+ +1},
\end{align*}
with $C_1 = (  q      - p^+   +1)-(  q      - p^-   +1)\epsilon $ and $\epsilon \in \left( 0, \tfrac{q - p^+ +1}{q - p^- +1} \right)$. Also,
\[
\begin{aligned}
C_0={\bf c_1} ^{\frac{8(\beta +q) n}{\min \{C_1 , 1\} ^2}  } \left[ C _2  (\consgaglia +1)^{n}\frac{\gaglia+p^{-} (\gaglia -1) \epsilon + \gaglia (1-\epsilon)q  }{( \gaglia - p^- )\epsilon} \right]^{\frac{2\beta}{C_1} },
\end{aligned}
\]
with \(\beta   =  1 +p^- \frac{\gaglia -  1}{\gaglia   }\epsilon\) and \(C_2 = C_2 (\Lambda _1 , \Lambda _2 , \ctb _1 , n_d , n)>1\), where \({\bf c_1}\) is given in Lemma \ref{64}.
\end{lemma}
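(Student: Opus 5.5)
This is a De Giorgi / Moser-type iteration on shrinking annular layers $\vltr_{\ell,r t}$, $t\in[\tfrac12,1]$. The absorption term $|u|^q$ supplies the smallness, and Lemma~\ref{64} closes the iteration.

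\textbf{Setup and energy inequality.} For $\tfrac12\le k<h\le 1$ we choose a Lipschitz cutoff $\eta$ depending only on $\piu$: $\eta=1$ on $\vltr_{\ell,rk}$, $\eta=0$ outside $\vltr_{\ell,rh}$, and $\fnf(\cdot,\grad\eta)\le \revc_{\fnf}\bigl((h-k)r\bigr)^{-1}$, using $\fnf(\cdot,\grad\piu)\le1$. Since $\ell<2r$ and $r<\ell$, the layer stays away from $\Gamma$. To make the support compact in the $\pid$ direction as well, we multiply by a cutoff $\chi_R(\pid)$. The tail error this produces is controlled by \ref{102}: $u^+\le\ctb_0 m^+(1+\pid)^{-\boldsymbol\delta}$ with $\boldsymbol\delta>d$, together with the volume bound \ref{104}, and it vanishes as $R\to\infty$.

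We test \eqref{222} with
\[
\zeta=(u^+)^{s}\eta^{m}\chi_R,\qquad s=\beta-1=p^-\tfrac{\gaglia-1}{\gaglia}\epsilon,
\]
where $m$ is large. After using \ref{fnf16}, \ref{fnf21}, Young's inequality, and $b\,\zeta\ge0$ on $\partial\Omega$, we obtain
\[
\int \eta^m (u^+)^{s-1}|\grad u^+|_{\fnf}^{p}+\int\eta^m (u^+)^{q+s}
\le C\int \eta^{m-p}(u^+)^{s+p-1}\bigl((h-k)r\bigr)^{-p}.
\]

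\textbf{Sobolev step and iteration.} Next we set $w=(u^+)^{(s+p^--1)/p^-}\eta^{m/p^-}$. The gradient of $w$ is handled through $p^-\le p\le p^+$: we split into the regions where $|\grad u|_{\fnf}\le1$ and $>1$, and on layers with $r<1$ we bound factors $r^{-p}$ by $r^{-p^+}$. Applying \eqref{107} to $w$ bounds $\bigl(\int w^{\gaglia}\bigr)^{1/\gaglia}$ by the right-hand side above.

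We then interpolate $\int\eta^{m-p}(u^+)^{s+p-1}$ between the $L^{q+s}$ term (absorption) and the measure $\wghtv(\vltr_{\ell,r})\le \ctb_1 r^{n_d}$ from \ref{104}. Hölder with exponent $(q+s)/(s+p-1)$ produces the factors $r^{n_d(\cdot)}$ that appear in $\tau$ through $\epsilon n_d(\gaglia-p^-)/\gaglia$.

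Defining $\xi(t)=\max\{1,\,r^{\tau}\operatorname{ess\,sup}_{\vltr_{\ell,rt}}u^+\}$ (or an $L^{\gaglia\beta}$-average version of it), the combination yields
\[
\xi(k)\le {\bf c}_0(h-k)^{-\sigma}\xi(h)^{\theta}, \qquad \theta<1,
\]
where $1-\theta$ is proportional to $C_1=(q-p^++1)-(q-p^-+1)\epsilon>0$. This is where the constraint $\epsilon<\tfrac{q-p^++1}{q-p^-+1}$ enters. Lemma~\ref{64}, applied with $b$ chosen so that $1-b\theta\simeq C_1/2$, then gives $\xi(\tfrac12)\le C_0$ with the stated exponents $\tfrac{2\beta}{C_1}$ and $\tfrac{8(\beta+q)n}{\min\{C_1,1\}^2}$. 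This is \eqref{242}.

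\textbf{Main obstacle.} The hardest step is passing from the integral estimate to the sup-type recursion with the exact homogeneity in $r$. The variable exponent mixes powers $r^{-p(x)}$, and the sup must be controlled by an integral. I would first attempt a single Moser step: an $L^{\gaglia\beta}$ reverse-Hölder between $k$ and $h$, combined with an $L^\infty$ bound obtained by iterating within the layer. I would then verify that collecting all powers of $r$ gives exactly $r^{-\tau C_1}$, and that $\tau\ge p^+/(q-p^++1)$, which follows since $\epsilon\ge0$ lowers both the numerator and $C_1$ in a compatible way.
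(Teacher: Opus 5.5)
There is a genuine gap, and it is exactly at the step you flag as the main obstacle. Testing \eqref{222} with $(u^+)^s\eta^m\chi_R$ and applying \eqref{107} once only bounds an integral of $u^+$ over $\vltr_{\ell,rk}$ by another integral over $\vltr_{\ell,rh}$. Nothing in that chain puts an $\operatorname{ess\,sup}$ on the left-hand side. So the recursion $\xi(k)\le \mathbf{c}_0(h-k)^{-\sigma}\xi(h)^\theta$ for a sup-type $\xi$ is never derived, and for an $L^{\gaglia\beta}$-type $\xi$ Lemma~\ref{64} gives no pointwise bound at all.

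A full Moser iteration could at best give some bound $u^+\le C' r^{-\tau'}$, with exponents dictated by that iteration. Nothing in your scheme produces $C_1=(q-p^++1)-(q-p^-+1)\epsilon$. Your H\"older step with exponent $(q+s)/(s+p-1)$ is a Keller--Osserman bound whose $r$-exponents involve $q-p^++1$ but no $\epsilon$-weighted combination. Setting $s=\beta-1$ reproduces the symbol $\beta$ but not the constants. Hence the claimed $\tau$, the claimed $C_0$, and the assertion that the powers of $r$ collect to exactly $r^{-\tau C_1}$ are all unsupported.

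Two further steps fail as written. First, \ref{104} does not give $\wghtv(\vltr_{\ell,r})\le \ctb_1 r^{n_d}$; it only bounds a $(1+\pid)$-weighted volume. In the model case $\mathbb{R}^d\times B^{n-d,+}_1(0)$ with $d\ge 1$ the layer has infinite measure, and after your cutoff $\chi_R$ the volume grows like $R^d$. The decay \ref{102} therefore has to enter the interpolation itself. Second, since $s<1$, the splitting $|\grad u|_{\fnf}^{p^-}\le 1+|\grad u|_{\fnf}^{p}$ leaves $\int \eta^m (u^+)^{s-1}$, which is uncontrolled near $\{u^+=0\}$.

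The missing idea is the paper's one-step level-set argument, which bounds suprema directly.
\begin{itemize}
\item Write $u=c_\ast w$ with $c_\ast=r^{-\tau}$ to be fixed, and let $m_t$ be the essential supremum of $w^+$ on $\vltr_{\ell,tr}$.
\item Test with the barrier truncations $z_k=(w-m_t\widetilde\xi-k)^+$. Here $\widetilde\xi=\xi(|\piu-\ell|)$ vanishes on $\vltr_{\ell,sr}$ and equals $1$ near the edge of $\vltr_{\ell,tr}$. No multiplicative cutoff is needed: \ref{102} gives \eqref{300}, i.e.\ bounded support for $k>0$, and also \eqref{301}.
\item The absorption contributes $c_\ast^q k^q\int z_k$. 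After \eqref{107} and H\"older, the convex combination $a^\epsilon b^{1-\epsilon}\le \epsilon a+(1-\epsilon)b$ of the gradient and absorption terms is precisely where $\epsilon$, $\beta$ and $C_1$ arise. This yields \eqref{217}.
\item Since $\frac{\din}{\din k}\int z_k=-\wghtv(\Omega_k)$, integrate \eqref{217} over $k\in(0,\mathcal K_0)$, using $\mathcal K_0\ge m_s$. Bound $\int z_0\lesssim m_t r^{n_d}$ via \eqref{301} and \ref{104}. The factor $r^{n_d\epsilon(\gaglia-p^-)/(\gaglia\beta)}$ comes from $\bigl(\int z_0\bigr)^{1-(1+(p^--1)\epsilon)/\beta}$.
\item With $c_\ast$ chosen to cancel the powers of $r$, this gives $m_s\le C(t-s)^{-\sigma}m_t^{\theta}$ with $\theta<1$, stated directly in terms of suprema. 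Only at that point is Lemma~\ref{64} applied.
\end{itemize}
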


\begin{proof} {\bf Step 1.} We make the substitution \(u=c_\ast w\), where \(c_\ast>1\) is a constant
to be chosen below; see \eqref{137}. We assume that
\[
 \wghtv\left(  \{x \in \mathcal V_{\ell,r/2}\mid w(x)>0\}\right)\neq 0,
\]
since otherwise \eqref{242} is immediate. Set
$$
\Omega_+ := \left\{x \in \mathcal V_{\ell,r} \;\middle|\; w(x)>0\right\}.
$$

Let
$$
m_t:=\operatorname{ess} \operatorname{sup} \left\{ w(x)\;\middle|\; x \in \vltr_{\ell,tr} \cap \Omega_+ \right\}, 
\qquad \frac12 \le t \le 1.
$$

Let also $\xi:\mathbb{R}\to[0,1]$ be a smooth function satisfying
\begin{equation*}
\xi(a)=
\begin{cases}
0 & \text{if } a\in (-\infty, sr],\\
1 & \text{if } a \in \left[\tfrac{(s+t)r}{2}, \infty\right),
\end{cases}
\end{equation*}
with \(\xi>0\) on \( (sr,\tfrac{(s+t)r}{2} )\), and
$$
|\xi'| \le \frac{c_1}{r(t-s)} \qquad \text{on } \mathbb{R}.
$$

Define $\widetilde \xi : \Omega \to \mathbb{R}$ by
\[
\widetilde \xi(x)=\xi\bigl(|\piu(x)-\ell|\bigr).
\]
Then, for some $c_1>1$,
\begin{equation}\label{163}
\bigl| \grad \widetilde \xi \bigr|_{\fnf}=\bigl| \deriv \widetilde \xi \bigr|_{\fnfs}
\le \frac{c_1}{r(t-s)} \qquad \text{on } \Omega .
\end{equation}

Let $\tfrac12 \le s<t \le 1$. Define the functions $z : \Omega\to\mathbb{R}$ and $z_k : \Omega\to\mathbb{R}$ by
\begin{align*}
z(x)&:= w(x) - m_t \,\xi\left(\left| \piu(x)-\ell \right|\right), \\[5pt]
z_k(x)&:=
\begin{cases}
\max \left\{ w(x)-\Bigl[m_t\,\xi\bigl(\left| \piu(x)-\ell \right|\bigr)+k\Bigr],\,0 \right\}
& \text{if } x\in \vltr_{\ell,tr},\\
0 & \text{if } x\in \Omega \setminus \vltr_{\ell,tr},
\end{cases}
\end{align*}
where $0 \le k\le \operatorname{ess} \operatorname{sup}_{\Omega_+} z$.

By \ref{102}, we have
\begin{gather}
\spt\, z_k \subset \left\{ x\in \vltr_{\ell,tr} \;\Big|\; \left(\frac{\ctb_0 m_t}{k}\right)^{\frac{1}{\boldsymbol{\delta}}} \ge \pid(x) \right\}
\quad \text{if } k>0,\label{300} \\[5pt]
z_0 \le w \le \ctb_0 m_t \left(1+\pid(x)\right)^{-\boldsymbol{\delta}}.\label{301}
\end{gather}


\medskip

{\bf Step 2.} We assume that \( m_{1/2} > 1 \); the conclusion is clearly valid when \( m_{1/2} \leq 1 \).

Let \( k \in (0, \mathcal{K}_0)\), where
$$
\mathcal{K}_0 := \sup \Bigl\{k\in [0 , \operatorname{ess} \operatorname{sup} _{\Omega^\prime} z] \ \Big| \  \wghtv\left( \{ x\in \vltr _{\ell , tr} \mid z_k (x) >0\} \right) \neq 0\Bigr\}.
$$
Observe that \( \mathcal{K}_0 \geq m_s \geq m_{1/2} >1 \).

Substituting \( \zeta = z_k \) into \eqref{222}, we obtain
\begin{equation*} 
\int _{\Omega }   \mathscr{A}_p ( \cdot , \grad u) \bullet \deriv z_k   +  f(\cdot , u)  z_k  \, \din \wghtv + \int _{\partial \Omega} b(\cdot , u)     z_k \, \ds \leq 0. 
\end{equation*}

Then,
\begin{equation*} 
 \int _{\Omega _k}    \mathscr{A}_p ( \cdot , \grad u) \bullet  \left( \deriv w - m_t \deriv \widetilde \xi  \right)    \, \din \wghtv +  \int _{\Omega _k}    f( \cdot , u ) z_k \, \din \wghtv + \int _{\partial \Omega} b(\cdot , u)     z_k \, \ds \leq 0, 
\end{equation*}
where 
$$
\Omega _k  :=\left\{x \in \vltr _{\ell , tr}\mid z_k (x)>0\right\}.
$$

Using \ref{43}, we obtain
\begin{equation}\label{237}
\begin{aligned} 
&\Lambda _1 \int_{\Omega  _k }    c_{\ast} ^{p - 1}|\grad w|^{p} _{\fnf }    \, \din \wghtv    
+ \int_{\Omega  _k}    c _\ast^{q   } k^{q   } z_k   \, \din \wghtv \\
&\qquad \leq \Lambda _2 c _1  \int_{\Omega  _k } \frac{m_t c_\ast ^{p-1}}{r(t-s)}  | \grad w|_{\fnf }^{p-1}  \, \din \wghtv,
\end{aligned}
\end{equation}
by \eqref{162}. 

Using Young's inequality, we obtain
\begin{equation} \label{238}
\begin{aligned}
   \int_{\Omega  _k }  \frac{  m _t  c_\ast ^{p-1}}{r(t-s)}   |\grad w|_{\fnf }^{p-1} \, \din \wghtv
   \leq & \, \displaystyle  \int_{\Omega  _k } c _\ast ^{p-1} \left\{\frac{1}{p} \epsilon _1 ^{-(p-1)}\left[\frac{m _t}{r(t-s)}\right]^{p} + \frac{p-1}{p} \epsilon _1 |\grad w|^p _{\fnf}\right\} \, \din \wghtv\\[5pt]
 \leq & \, \displaystyle    c_\ast ^{p^+-1} c _2 \left[\frac{m _t}{r(t-s)}\right]^{p^+} \wghtv (\Omega  _k ) + \frac{p^+-1}{p^+}\epsilon _1 \int _{\Omega _k}c_\ast ^{p-1} |\grad w|^{p}  _{\fnf} \, \din \wghtv ,
\end{aligned}
\end{equation}
where \( c _2 =\tfrac{1}{p^-} \max \{\epsilon _1 ^{-p^+ +1} , \epsilon _1 ^{-p^- +1} \} \). In the last inequality, we used \(m_t >1\) for \(1/2\leq t\leq 1\), and \(c _\ast >1\).

Choose $
\epsilon _1 = \tfrac{\Lambda _1 p^+}{2\Lambda _2 c _1(p^+-1)}$. Then, by \eqref{237} and \eqref{238},
\begin{align*}
\frac{\Lambda _1}{2}\int_{\Omega _k   }     c_{\ast} ^{p- 1}  |\grad w|_{\fnf} ^{p}    \, \din \wghtv  + \int_{\Omega  _k } c _\ast ^{q   } k^{q   }z_k  \, \din \wghtv  
 \leq   c_3 c_\ast ^{p ^+  -1 } \left[\frac{m_t}{r (t-s)} \right]^{p ^+ }  \wghtv (\Omega _k),
\end{align*}
where 
$$
c _3 = \frac{c_1 \Lambda _2}{p^{-}} \max \left\{\left[ \frac{\Lambda_1 p^{+}}{2 \Lambda_2 c _1\left(p^{+}-1\right)} \right]^{-p^{+}+1}, \left[ \frac{\Lambda_1 p^{+}}{2 \Lambda_2 c _1\left(p^{+}-1\right)} \right]^{-p^{-}+1}\right\}.
$$

Note that 
$$
\left| \grad z_{k} \right|_{\fnf} \leq \left| \grad w \right|_{\fnf} + m_{t} \bigl|\grad \widetilde \xi\bigr|_{\fnf}  \quad \text{in } \Omega  _k .
$$
Therefore,
\begin{equation*}
\begin{aligned}
& \frac{\Lambda _1}{2}\int _{\Omega  _k } c_{\ast} ^{p - 1}    \left( \frac{1}{2^{p^+ -1}} \left|\grad z_{k}\right|_{\fnf}^{p  } - m_t ^p \bigl|\grad \widetilde \xi \bigr|_{\fnf}^p \right) \, \din \wghtv   
+ \int_{\Omega  _k } c_\ast ^{q   } k^{q } z_k  \, \din \wghtv 
 \leq c _3 c_\ast ^{p^+ -1 } \left[\frac{m_t}{r (t-s)} \right]^{p ^+ } \wghtv (\Omega _k ) .
\end{aligned}
\end{equation*}

Employing \( |\grad z_k|_{\fnf} ^{p^-} \leq 1 + |\grad z_k|_{\fnf} ^p \) in \( \Omega _k \), together with \eqref{163}, we obtain
\begin{equation}\label{112}
\begin{aligned}
c_{\ast} ^{p ^- - 1} \int_{\Omega _k   }       \left|\grad z_k\right|_{\fnf}^{p ^-}    \, \din \wghtv  + c _\ast ^{q }  k^{q } \int_{\Omega  _k }  z_k  \, \din \wghtv  
 \leq  & \, \displaystyle  c_\ast ^{p ^+  -1 } \left\{ c_4   \left[\frac{m_t}{r (t-s)} \right]^{p ^+ } +1 \right\} \wghtv (\Omega _k )\\[3pt]
 \leq  & \, \displaystyle (c _4 +1) c_\ast ^{p ^+  -1 } \left[\frac{m_t}{r (t-s)} \right]^{p ^+ }  \wghtv (\Omega _k),
 \end{aligned}
\end{equation}
where 
$$
c _4 := 2^{p^+-1}\left(\max \left\{\frac{2}{\Lambda _1} , 1\right\}c _3 + c _1 ^{p^+}\right) \leq 2^{2p^+-1}  \Lambda_2 \max \left\{\frac{2}{\Lambda _1} , 1\right\}  \left(\frac{ \Lambda _2}{\Lambda _1} \right)^{p^+-1}   c _1^{p^+}.
$$


\medskip

{\bf Step 3.} Using condition \ref{106}, 
\begin{equation}\label{239} 
\left(\int_{\Omega  _k}z_{k}^{\gaglia   } \, \din \wghtv \right)^{\frac{1}{\gaglia   }}  \leq  \consgaglia \left(\int _{\Omega  _k}\left|\grad z_{k}\right|_{\fnf}^{p^-} \, \din \wghtv   \right)^{\frac{1}{p^- }} .
\end{equation}

Using Hölder's inequality, we get
\begin{equation}\label{240}
\int_{\Omega _k} z_{k} \, \din \wghtv  \leq \left(\int_{\Omega  _k} z_{k}^{ \gaglia   } \, \din \wghtv \right)^{\frac{1}{\gaglia   }} \wghtv(\Omega  _k )^{\frac{\gaglia    -1}{ \gaglia    }} .
\end{equation} 

From \eqref{112}--\eqref{240}, we have
 \begin{equation*} 
\begin{aligned}
& \epsilon c_\ast  ^{p^-   - 1}  \displaystyle \left[ \consgaglia ^{-1} \wghtv ( \Omega  _k ) ^{-\frac{\gaglia    -  1}{\gaglia   }}   \int_{\Omega _k} z_k \, \din \wghtv  \right] ^{p ^-} 
  \\[3pt]
 &  + \displaystyle (1-\epsilon) c _\ast ^{q }   k^q   \int _{\Omega _k} z_k \, \din \wghtv   
  \leq     \displaystyle  \left( c _4 +1\right)c _\ast ^{p ^+     -1 } \left[\frac{m_t}{r (t-s)} \right]^{p  ^+ } \wghtv(\Omega _k )  .
\end{aligned}
\end{equation*}
 

Then,
\begin{equation} \label{215}
\begin{aligned}
 & \epsilon c_\ast  ^{p^-   - 1} \displaystyle  \left[ \consgaglia ^{-1} \int_{\Omega _k} z_k \, \din \wghtv  \right] ^{p ^-} 
    + \displaystyle (1-\epsilon) c _\ast ^{q }   k^q  \wghtv(\Omega _k ) ^{ p^-\frac{\gaglia -1}{\gaglia}}  \int _{\Omega _k} z_k \, \din \wghtv  \\[5pt]
 & \leq    \displaystyle  (  c _4 +1) c _\ast ^{p ^+     -1 } \left[\frac{m_t}{r (t-s)} \right]^{p  ^+ } \wghtv (\Omega _k ) ^{1+ p^-\frac{\gaglia -1}{\gaglia}}.
\end{aligned}
\end{equation}

Applying Young's inequality $|a|^{\epsilon} |b|^{1-\epsilon}\leq \epsilon |a| + (1-\epsilon)|b|$ to the left-hand side of \eqref{215}, we obtain
\begin{equation*}
\begin{aligned}
&  k^{(1-\epsilon)q}    \displaystyle c _\ast ^{(p^- -1 )\epsilon + q(1-\epsilon)}    \wghtv (\Omega _k ) ^{ p^-\frac{\gaglia -1}{\gaglia} (1-\epsilon)}  \left(\int _{\Omega _k} z_k \, \din \wghtv \right)^{p^- \epsilon + 1 - \epsilon}\\[5pt]
& \leq   \displaystyle (c _4 +1) \consgaglia ^{\epsilon p^-} c _\ast ^{p ^+     -1 } \left[\frac{m_t}{r (t-s)} \right]^{p  ^+ } \wghtv ( \Omega _k ) ^{1+ p^-\frac{\gaglia -1}{\gaglia}}.
\end{aligned}
\end{equation*}
   
Therefore,
\begin{equation} \label{217}
\begin{aligned}
   k^{\frac{(1-\epsilon)q }{\beta}}  & \displaystyle  c_\ast ^{\frac{(  q      - p^+   +1)-(  q      - p^-   +1)\epsilon}{\beta  }} \\
\leq & \, \displaystyle  c_{5} \left(   \int_{\Omega _k } z_k \, \din \wghtv  \right)^{-\frac{ 1 + (p^- - 1)\epsilon }{\beta  }} \left[\frac{m_t}{r (t-s)} \right]^{\frac{p  ^+ }{\beta  }}   \wghtv(\Omega _k) ,
\end{aligned}
\end{equation}
where $c _5 = (c _4 +1)^{1/\beta}\consgaglia ^{\epsilon p^-/\beta}$, $\beta   =  1 +p^- \frac{\gaglia -  1}{\gaglia   }\epsilon $, and $k>0$.

\medskip

{\bf Step 4.} Integrating \eqref{217} with respect to $k$,
\begin{equation*}
\begin{aligned}
  c_\ast ^{\frac{(  q      - p^+   +1)-(  q      - p^-   +1)\epsilon}{\beta  }}     & \displaystyle   \int _0 ^{\mathcal{K} _0} k^{\frac{(1-\epsilon)q }{\beta}}   \, \din k \\
\leq & \, \displaystyle  c_{5}  \left[\frac{m_t}{r (t-s)} \right]^{\frac{p  ^+ }{\beta  }} \int _0 ^{\mathcal{K} _0} \left(   \int_{\Omega _k } z_k \, \din \wghtv  \right)^{-\frac{ 1 + (p^- - 1)\epsilon }{\beta  }}  \wghtv( \Omega _k) \, \din k .
\end{aligned}
\end{equation*}



Let us consider the equality
$$
\frac{\textnormal d}{\textnormal d k}  \left(   \int_{\Omega _k } z_k \, \din \wghtv  \right) = - \wghtv (\Omega _k)
$$


Since $1- \frac{1+(p^- -1)\epsilon}{\beta} >0$ and $\mathcal{K} _0 >1$,
\begin{equation*}
\begin{aligned}
  c_\ast ^{\frac{(  q      - p^+   +1)-(  q      - p^-   +1)\epsilon}{\beta  }}     & \displaystyle   \mathcal{K} _0^{1+ \frac{(1-\epsilon)q}{\beta} }   \\
 \leq & \, \displaystyle  c_{6}  \left[\frac{m_t}{r (t-s)} \right]^{\frac{p  ^+ }{\beta  }}  \left(   \int_{\Omega _0 } z_0 \, \din \wghtv  \right)^{1-\frac{ 1 + (p^- - 1)\epsilon }{\beta  }}  ,
\end{aligned}
\end{equation*}
where 
$$
c _6 := c _5 \frac{\beta +(1-\epsilon)q  }{\beta -1 -(p^- -1)\epsilon} = c _5 \frac{\gaglia+p^{-} (\gaglia -1) \epsilon + \gaglia (1-\epsilon)q  }{( \gaglia - p^- )\epsilon} .
$$

We note that $\mathcal{K} _0 \geq m_s$. Also, by applying \ref{104} and \eqref{301}, we obtain
\begin{equation*}
\begin{aligned}
  m_s ^{1+ \frac{(1-\epsilon)q}{\beta} } &  \displaystyle  c_\ast ^{\frac{(  q      - p^+   +1)-(  q      - p^-   +1)\epsilon}{\beta  }}     \displaystyle     \\
 \leq  & \, \displaystyle 3^{n_d}  \ctb _1 c_{6}  \left[\frac{m_t}{r (t-s)} \right]^{\frac{p  ^+ }{\beta  }}  m_t ^{1-\frac{ 1 + (p^- - 1)\epsilon }{\beta  }} r^{n_d\left[1-\frac{ 1 + (p^- - 1)\epsilon }{\beta  } \right]} ,
\end{aligned}
\end{equation*}
because $\ell +tr\leq 3r$.

Now we take $c _\ast >0$ such that
$$
c_\ast ^{\frac{(  q      - p^+   +1)-(  q      - p^-   +1)\epsilon}{\beta  }}   =
r^{- \frac{p^+}{\beta}+ n_d\left[1-\frac{ 1 + (p^- - 1)\epsilon }{\beta  } \right]}.
$$


Hence, 
\begin{equation}\label{137}
c _\ast  = r^{-\tau},
\end{equation} 
where
\begin{align*}
\tau = & \, \displaystyle -\left\{   - \frac{p^+}{\beta} + n_d\left[1-\frac{ 1 + (p^- - 1)\epsilon }{\beta  } \right] \right\}\\[3pt]
& \, \displaystyle \cdot \left[\frac{(  q      - p^+   +1)-(  q      - p^-   +1)\epsilon}{\beta  } \right]^{-1} \geq \frac{p^+}{q - p^+ +1},
\end{align*}
if $\epsilon  \in (0, \frac{ q      - p^+   +1}{ q      - p^-   +1})$. 

On the other hand,
$$
m_s \leq (3^{n_d} \ctb _1 c _{6})^{\frac{\beta}{p^+} \sigma} \frac{m_t ^{\theta}}{(t-s)^{\sigma}},
$$
where
\begin{align*}
\theta = & \, \displaystyle  \left[ \frac{p^{+}}{\beta} +  1-\frac{1+(p^{-}-1) \epsilon}{\beta} \right]
 \left[ 1+\frac{(1-\epsilon) q}{\beta} \right] ^{-1}<1\\
\sigma = & \, \displaystyle \frac{p^+}{\beta}\left[1+\frac{(1-\epsilon) q}{\beta}  \right] ^{-1}.
\end{align*}

By virtue of Lemma \ref{64}, we obtain 
$$
m_{1 /2} \leq c _7 := {\bf c}_1 ^{\frac{\sigma b}{(1-b\theta)(b-1)}}(3^{n_d} \ctb _1 c _6)^{\frac{\sigma\beta}{p^+(1-b\theta)} },
$$
where $b=\frac{1+\min \{\theta ^{-1}  ,3\}}{2}$.  Moreover,
$$
\begin{aligned}
c _7 \leq & \,  {\bf c}_1 ^{\frac{\sigma b}{(1-b\theta)(b-1)}} \Biggl[ 3^{n_d} \ctb _1 \left(2^{2p^+}\Lambda_2\max \left\{\frac{2}{\Lambda _1} , 1\right\} \left(\frac{ \Lambda _2}{\Lambda _1} \right)^{p^+-1}  c _1^{p^+}  \consgaglia ^{\epsilon p^-} \right)^{1/\beta}\Biggr.\\[3pt]
& \,  \cdot \Biggl. \frac{\gaglia+p^{-} (\gaglia -1) \epsilon + \gaglia (1-\epsilon)q  }{( \gaglia - p^- )\epsilon} \Biggr]^{\frac{\sigma\beta}{p^+(1-b\theta)} }\\[5pt]
\leq & \, {\bf c_1} ^{\frac{8(\beta+q)  n}{\min \{c _8 , 1\} ^2}  } \left[ c _9 (\Lambda _1 , \Lambda _2 , \ctb _1 , n_d , n)  (\consgaglia +1)^{n}\frac{\gaglia+p^{-} (\gaglia -1) \epsilon + \gaglia (1-\epsilon)q  }{( \gaglia - p^- )\epsilon} \right]^{\frac{2\beta}{c _8} },
\end{aligned}
$$
where $\beta   =  1 +p^- \frac{\gaglia -  1}{\gaglia   }\epsilon $, $c _8 := q-p^{+}+1-\epsilon(q-p^{-}+1)$. Furthermore,
\begin{gather*}
1-b\theta  = \frac{2 - \theta - \theta \min \{\theta ^{-1} , 3\} }{2}\geq \frac{1-\theta}{2} \geq \frac{ q -p^++1 -\epsilon (q -p^- +1)}{2(\beta +q)},\\[3pt]
b-1 = \frac{\min \left\{\theta^{-1}, 3\right\} -1 }{2}\geq \min \left\{\frac{1-\theta}{2\theta} , 1\right\} \geq \min \left\{\frac{ q -p^++1 -\epsilon (q -p^- +1)}{2(\beta +q)} ,1 \right\}.
\end{gather*}


From the substitution $u=c _\ast w$, we obtain
$$
\operatorname{ess} \operatorname{sup} \left\{u(x) \mid x \in \mathcal V_{\ell , r/2} \cap \Omega _+ \right\}=c _\ast m_{1/2} \leq c_{7} c _\ast =  c _{7} r^{-\tau}.
$$
Therefore, we conclude the proof of the proposition. \end{proof}

\begin{corollary} \label{277}
Under assumptions \ref{135}, \ref{136}, \ref{43}, \eqref{224}, \eqref{225}, and \ref{102}--\ref{106}, suppose that $v\in  W^{1, p(x)  } _{\operatorname{loc}} (\overline{\Omega} \setminus (\Gamma \cup \Sigma)  ; \wghtv )
\cap L^{\infty} _{\operatorname{loc}} (\overline{\Omega} \setminus (\Gamma \cup \Sigma) )$ satisfies
\begin{equation*}
\int _{\Omega }\left(  \tilde{\mathscr{A}}_p ( \cdot , \grad (-v)) \bullet \deriv \zeta  -  f (\cdot , -v) \zeta  \right) \, \din \wghtv  - \int _{\partial \Omega}   b( \cdot ,- v)   \zeta \, \ds \leq 0,
\end{equation*}
for all $\zeta \in W_{\loc }^{1, p (x)}(\overline{\Omega} \setminus (\Gamma \cup \Sigma) ; \wghtv) \cap L_{\operatorname{loc}}^{\infty}(\overline{\Omega} \setminus (\Gamma \cup \Sigma) )$ with $\zeta \geq 0$   and $\operatorname{supp} \zeta  \subset \overline{\Omega} \setminus (\Gamma \cup \Sigma)$. 

Then, if \(\ell<2r\) and \(0<r<\ell<\rdo\), we have the estimate
\begin{equation*}
\max \{v ,0\}   \leq \tilde C_0 r^{-\tau}, \quad \text{\textit{a.e.} in } \quad  \vltr _{\ell, r/2},
\end{equation*}
where
\begin{align*}
\tau =  \displaystyle \frac{1}{ \tilde C_1}\left[  p^+  - \frac{\epsilon n_d(\gaglia -p^{-})}{\gaglia }  \right] \geq \frac{p^+}{q - p^+ +1},
\end{align*}
with $\tilde C_1 = (  q      - p^+   +1)-(  q      - p^-   +1)\epsilon$ and $\epsilon \in ( 0, \tfrac{q      - p^+   +1}{q      - p^-   +1})$. Also,
$$
\begin{aligned}
\tilde C_0={\bf c_1} ^{\frac{8(\beta+q)  n}{\min \{ \tilde C_1 , 1\} ^2}  } \left[ \tilde C _2  (\consgaglia +1)^{n}\frac{\gaglia+p^{-} (\gaglia -1) \epsilon + \gaglia (1-\epsilon)q  }{( \gaglia - p^- )\epsilon} \right]^{\frac{2\beta}{\tilde C_1} },
\end{aligned}
$$
with $\beta   =  1 +p^- \frac{\gaglia -  1}{\gaglia   }\epsilon $ and $\tilde C_2 = \tilde C_2 (\Lambda _1 , \Lambda _2 , \ctb _1  , \revc _{\fnf} , n_d , n)>1$, where \({\bf c_1}\) is given in Lemma \ref{64}.
\end{corollary}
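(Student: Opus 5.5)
The idea is to reduce Corollary \ref{277} to Lemma \ref{219}. We rewrite the hypothesis on $v$ as a subsolution inequality of the same form as \eqref{222}, for a new operator and new nonlinearities, and then check that the structural hypotheses used in the proof of Lemma \ref{219} still hold. They will hold with $\Lambda_1,\Lambda_2$ replaced by $\revc_{\fnf}^{-p^+}\Lambda_1$ and $\revc_{\fnf}^{p^+}\Lambda_2$.

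Concretely, set
\[
\mathscr B_p(x,Y):=\tilde{\mathscr A}_p\bigl(x,-\!\!\!\!\quad\bigr)\ \text{evaluated so that}\ \mathscr B_p(\cdot,\grad v):=\tilde{\mathscr A}_p(\cdot,\grad(-v)),
\]
together with
\[
\tilde f(x,s):=-f(x,-s),\qquad \tilde b(x,s):=-b(x,-s).
\]
Since $\mathscr B_p$ only ever enters through its value on $\grad v$, defining it this way is legitimate. The hypothesis then reads exactly
\[
\int_\Omega\bigl(\mathscr B_p(\cdot,\grad v)\bullet\deriv\zeta+\tilde f(\cdot,v)\zeta\bigr)\,\din\wghtv+\int_{\partial\Omega}\tilde b(\cdot,v)\zeta\,\ds\le 0
\]
for every admissible $\zeta\ge0$.

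Next we check the structural conditions. From \ref{43}, using $\operatorname{sign}(-s)=-\operatorname{sign}s$, we get
\[
\tilde f(x,s)\operatorname{sign}s=f(x,-s)\operatorname{sign}(-s)\ge|s|^q,
\]
and similarly $\tilde b(x,s)\operatorname{sign}s\ge0$. So $\tilde f,\tilde b$ satisfy \ref{43}. Conditions \ref{135} and \ref{136} are exactly \ref{fnf16} and \ref{fnf21} for $\mathscr B_p$, with the constants modified by $\revc_{\fnf}^{\mp p^+}$. Assumption \ref{102} involves $v^\pm=u^\mp$, so it is unchanged; \ref{104}, \ref{106}, \eqref{224} and \eqref{225} do not depend on the sign of the solution.

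One point needs care. The proof of Lemma \ref{219} uses the ellipticity only along the gradient of $u$ itself, through $\mathscr A_p(\cdot,\grad u)\bullet\deriv w$ with $u=c_\ast w$, and the bound $|\mathscr A_p|_{\fnf}\le\Lambda_2|\grad u|^{p-1}_{\fnf}$ paired with $\deriv\widetilde\xi$ via \eqref{162}. When $v=c_\ast w$, the term appearing is $\tilde{\mathscr A}_p(\cdot,\grad(-v))\bullet\deriv v$, and \ref{135} bounds it below by a multiple of $|\grad v|^p_{\fnf}$, which is what Step 2 requires. Also, $\deriv z_k=\deriv w-m_t\deriv\widetilde\xi$ is linear in the differential, so no reversibility issue arises there. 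The only other place norms appear is $|\grad z_k|_{\fnf}\le|\grad w|_{\fnf}+m_t|\grad\widetilde\xi|_{\fnf}$. This follows from the triangle inequality for $\fnfs$ applied to differentials, and $\widetilde\xi$ still satisfies \eqref{163}. Finally, the test function $z_k$ built from $w=v/c_\ast$ is admissible, nonnegative, and supported in $\vltr_{\ell,tr}$, exactly as before.

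Once these checks are done, Steps 1–4 of the proof of Lemma \ref{219} go through verbatim with the constants $\revc_{\fnf}^{-p^+}\Lambda_1$ and $\revc_{\fnf}^{p^+}\Lambda_2$. This gives $\max\{v,0\}\le\tilde C_0r^{-\tau}$ with the same $\tau$ and $\tilde C_1=C_1$. The constant $\tilde C_2$ arises from $C_2$ through its dependence on $\Lambda_1,\Lambda_2$; since $p^+<n_d\le n$, the powers $\revc_{\fnf}^{\pm p^+}$ can be absorbed into a constant depending on $\revc_{\fnf},n_d,n$. The main obstacle is this bookkeeping: making sure no step of Lemma \ref{219} secretly used $\grad(-u)=-\grad u$ or the symmetry of $\fnf$. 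I expect the only such spot to be the pairing estimate via \eqref{162}, and there only $\fnfs(\deriv\widetilde\xi)$ is needed, which is fine.
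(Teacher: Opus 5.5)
Your proposal is correct and is the argument the paper intends. The paper gives no separate proof of Corollary~\ref{277}; as signalled in the introduction and by \ref{135}--\ref{136}, it is obtained by rerunning the proof of Lemma~\ref{219} for the operator $\tilde{\mathscr A}_p(\cdot,\grad(-v))$, with constants $\revc_{\fnf}^{-p^+}\Lambda_1$, $\revc_{\fnf}^{p^+}\Lambda_2$ and nonlinearities $-f(\cdot,-v)$, $-b(\cdot,-v)$, exactly as you do. One small correction to your bookkeeping: the step $|\grad z_k|_{\fnf}\le|\grad w|_{\fnf}+m_t|\grad\widetilde\xi|_{\fnf}$ actually uses $\fnfs(\cdot,-\deriv\widetilde\xi)$ rather than $\fnfs(\cdot,\deriv\widetilde\xi)$, but since $\fnfs(\cdot,-\omega)\le\revc_{\fnf}\,\fnfs(\cdot,\omega)$ this only affects $\tilde C_2$, which is allowed to depend on $\revc_{\fnf}$.
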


Proceeding in the same way as in Lemma \ref{219}, we obtain the following result.

\begin{corollary}[Without the boundary condition] \label{269}
Under assumptions \ref{fnf16}, \ref{fnf21}, \ref{43}, \eqref{224}, \eqref{225}, and \ref{102}--\ref{106}, suppose that $u \in W_{\operatorname{loc}  }^{1  , p (x)  }( \Omega  \setminus \Gamma ; \wghtv) \cap L_{\operatorname{loc}  }^{\infty}( \Omega  \setminus \Gamma)$ satisfies
\begin{equation} \label{272}
\int_{\Omega} \left( \mathscr{A}_p(\cdot , \grad u) \bullet \deriv \zeta  +  f(  \cdot , u ) \zeta  + \widehat  b (\cdot , u) \zeta    \right)  \, \din \wghtv \leq 0,
\end{equation}
for all $\zeta \in W_{\operatorname{loc}  }^{1, p(x)  }( \Omega  \setminus \Gamma ; \wghtv ) \cap L_{\operatorname{loc}  }^{\infty}( \Omega  \setminus \Gamma)$ with $\zeta \geq 0$ and $\operatorname{supp}\zeta \subset  \Omega  \setminus \Gamma$.  Here $\widehat b:\Omega\times \mathbb{R}\to \mathbb{R}$ is measurable and locally bounded, and satisfies $\widehat b(x,u)\operatorname{sign}u\geq 0$ for a.e. $x\in \Omega$ and all $u\in \mathbb{R}$.

Then, if $\ell < 2r$ and $0<r<\ell<\rdo$, we have the estimate
$$
\max \{u , 0\} \leq {\bf C} r^{-\tau} \quad\text{\textit{a.e.} in}\quad \vltr _{\ell, r / 2},
$$
where the constants ${\bf C}$ and $\tau$ are as in Lemma \ref{219}.
\end{corollary}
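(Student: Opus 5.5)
The plan is to run the proof of Lemma \ref{219} essentially line by line, with the boundary integral $\int_{\partial\Omega} b(\cdot,u)\zeta\,\ds$ replaced by the interior integral $\int_\Omega \widehat b(\cdot,u)\zeta\,\din\wghtv$. That term only ever appears with a favourable sign, so the argument goes through unchanged.

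\textbf{Step 1 (setup).} Write $u=c_\ast w$ and keep $m_t$, the cut-off $\widetilde\xi(x)=\xi(|\piu(x)-\ell|)$ with the bound \eqref{163}, and the truncations $z_k$ and sets $\Omega_k$ exactly as in Lemma \ref{219}. The first thing to check is that $z_k$ is admissible in \eqref{272}. It is nonnegative, it belongs to $W^{1,p(x)}_{\loc}(\Omega\setminus\Gamma;\wghtv)\cap L^\infty_{\loc}(\Omega\setminus\Gamma)$, and its support is contained in $\overline{\vltr_{\ell,tr}}\cap\{\pid\le(\ctb_0m_t/k)^{1/\boldsymbol\delta}\}$ by \eqref{300}. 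This set is bounded by \eqref{278}, it stays at distance $\ge \ell-tr>0$ from $\Gamma$ in terms of $\piu$, and it lies in $\Omega$ because of the standing choice of $\rdo$ (in the interior setting one uses $\mathcal U\Subset\Omega$). Hence $\spt z_k\subset\Omega\setminus\Gamma$ and $z_k$ is an admissible test function.

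\textbf{Step 2 (energy inequality).} Insert $\zeta=z_k$ into \eqref{272}. On $\Omega_k$ we have $u>0$, so $\widehat b(\cdot,u)z_k\ge 0$ and this term can simply be dropped. Moreover $f(\cdot,u)z_k\ge c_\ast^q k^q z_k$ by \ref{43}, since $w\ge k$ on $\Omega_k$. Using \ref{fnf16}, \ref{fnf21}, \eqref{162} and \eqref{163}, this gives \eqref{237} verbatim. Young's inequality then produces \eqref{238} and \eqref{112} with the same constants $c_3,c_4$. This step involves no reversibility, so the constants depend only on $\Lambda_1,\Lambda_2,\ctb_1,n_d,n$.

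\textbf{Step 3 (iteration).} Apply \eqref{107} from \ref{106} to $z_k$. This is legitimate because $z_k$ has compact support in $\overline{\mathcal U}\setminus(\partial\mathcal U\cap\Omega)$, as its support sits in $\{\piu<\ell+r\}\subset\{\piu<2\rdo\}$. Then follow Steps 3--4 of Lemma \ref{219}:
\begin{itemize}
\item use Hölder to get \eqref{240}, and interpolate with $\epsilon$ to reach \eqref{217};
\item integrate in $k$ over $(0,\mathcal K_0)$, using $\frac{\din}{\din k}\int_{\Omega_k}z_k=-\wghtv(\Omega_k)$;
\item bound $\int_{\Omega_0}z_0$ via \eqref{301} and \ref{104};
\item choose $c_\ast=r^{-\tau}$;
\item arrive at $m_s\le C(t-s)^{-\sigma}m_t^\theta$ with $\theta<1$, and conclude with Lemma \ref{64} to obtain $m_{1/2}\le c_7$.
\end{itemize}
Undoing the substitution gives $\max\{u,0\}\le c_7 r^{-\tau}$ on $\vltr_{\ell,r/2}$, with the same $\mathbf C$ and $\tau$ as in Lemma \ref{219}.

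The only step needing real care is the admissibility and support check in Step 1. The growth estimates themselves are identical to those in Lemma \ref{219}, and the sign condition on $\widehat b$ makes the extra term harmless.
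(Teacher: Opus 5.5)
Your proposal is correct and is exactly the paper's argument: the paper simply reruns Lemma~\ref{219}, with the boundary integral replaced by $\int_\Omega \widehat b(\cdot,u)z_k\,\din\wghtv$, which is nonnegative because $u>0$ on $\Omega_k$ and is therefore dropped. One precision: $z_k$ is admissible in \eqref{272} only if $\spt z_k$ stays away from all of $\partial\Omega$, and this comes from the interior hypothesis $\mathcal U\Subset\Omega$ (assumed in Corollary~\ref{61} and implicit here), not from the choice of $\rdo$, which only keeps supports away from $\Sigma$.
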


\section{Boundedness of Solutions}\label{158}



\begin{proposition} \label{267}
Under assumptions \ref{fnf16}, \ref{fnf21}, \ref{43}, \eqref{224}--\eqref{243}, and \ref{102}--\ref{106}, suppose that $u \in W_{\operatorname{loc}}^{1, p (x)}(\overline{\Omega} \setminus ( \Gamma \cup \Sigma ); \wghtv)\cap L_{\operatorname{loc}}^{\infty}(\overline{\Omega} \setminus (\Gamma \cup \Sigma))$ satisfies \eqref{222}. Then
$$
\left(1+  \pid (x) \right)^{\boldsymbol{\delta}}\max \{u (x), 0\} \in L^{\infty}_{\loc}( \overline \Omega \setminus \Sigma) .
$$
\end{proposition}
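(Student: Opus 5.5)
The plan is to combine the pointwise estimate of Lemma \ref{219} with the weighted decay hypothesis \ref{102}, and then to upgrade the resulting bound near $\Gamma$ to genuine boundedness using the integrability condition \eqref{243}.

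Away from $\Gamma$ there is nothing to prove. If $K\subset\overline\Omega\setminus\Sigma$ is compact and $K\cap\Gamma=\emptyset$, then $\piu\ge r_0>0$ on $K$ and $u^+\in L^\infty(K)$ by assumption. In the strip $\{r_0/2<\piu<r_2\}$ with $r_2<1$, hypothesis \ref{102} gives $(1+\pid)^{\boldsymbol\delta}u^+\le \ctb_0 m^+_{r_0/2,r_2}$. The quantity $m^+_{r_0/2,r_2}$ is finite: cover the strip by the shells $\vltr_{\ell,r/2}$ with $\ell\in[r_0/2,r_2]$ and $r\simeq \ell$, and apply Lemma \ref{219} to get $u^+\le C_0\ell^{-\tau}\le C_0(r_0/2)^{-\tau}$. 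Hence the weighted function is bounded on every region $\{\piu>r_0\}\cap\{\piu<r_2\}$. On $\{\piu\ge r_2\}$ we use the local boundedness of $u$, assuming there that \ref{102} can be extended by the same cutoff argument, or taking $r_2$ close to $1$ together with $\sup_\Omega\piu<\infty$.

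The real issue is a neighbourhood of $\Gamma$, where Lemma \ref{219} only gives $u^+\le C_0\piu^{-\tau}$. We must show that $u^+$ is in fact bounded near $\Gamma$, so the singularity is weak. I would run a Moser/De Giorgi type iteration with test functions $\zeta=\eta^{s}\,\psi_j\,(u-k)_+$. Here $\eta$ is a cutoff in $\pid$, made admissible via \eqref{300} and \eqref{301}, and $\psi_j=\min\{1,\max\{0,\log(\piu/\varepsilon_j)\}\}$-type cutoffs vanish near $\Gamma$. The error term from $\grad\psi_j$ is controlled by
\[
\int_{\{\varepsilon<\piu<2\varepsilon\}} |\mathscr A_p|_{\fnf}\,|\grad\psi_j|_{\fnf}\,u^+\,\din\wghtv .
\]
First obtain a Caccioppoli bound on each dyadic shell from the absorption term, namely $\int_{\{\varepsilon<\piu<2\varepsilon\}}|u|^{q+1}+|\grad u|^p\lesssim \varepsilon^{n_d-\tau(q+1)}$-type estimates, using $\tau$ close to $p^+/(q-p^++1)$ and the volume bound \ref{104}. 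Condition \eqref{243}, $n_d>pq/(q-p+1)$, is exactly what makes the exponent $n_d-p-\tau(p-1)\cdot\ldots$ of the error term positive. Choosing $\epsilon$ small in Lemma \ref{219}, so that $\tau$ is close to $p^+/(q-p^++1)$, the shell error tends to $0$ as $\varepsilon\to0$.

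With this in hand, $u$ becomes a subsolution across $\Gamma$ in $\overline\Omega\setminus\Sigma$, with test functions supported up to $\Gamma$. Since now $u^+\in L^{q+1}$ and $|\grad u|\in L^{p^-}$ near $\Gamma$, the same De Giorgi iteration as in Steps 2--4 of Lemma \ref{219} can be repeated. This time we use full balls or cylinders $\{\piu<tr\}$ in place of the annuli $\vltr_{\ell,tr}$, and Lemma \ref{64} together with the Sobolev inequality \ref{106}. The result is $\operatorname{ess\,sup}_{\{\piu<r/2\}}u^+<\infty$. Combining with \ref{102}, applied with $r_1\to0$ and a finite $m^+$, gives $(1+\pid)^{\boldsymbol\delta}u^+\in L^\infty$ near $\Gamma$.

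The main obstacle is the vanishing of the shell error term near $\Gamma$. It requires tracking the exponents carefully, choosing $\epsilon$ so that $\tau$ satisfies $\tau(q-p^++1)<\ldots$ strictly via \eqref{243}, and handling the variable exponent through $p^\pm$ and the unbounded $\pid$-direction via the weight in \ref{104}.
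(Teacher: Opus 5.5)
Your handling of the region away from $\Gamma$ is fine. Near $\Gamma$, however, the argument does not close under \eqref{243}, and the exponent bookkeeping you leave as ``$\ldots$'' is exactly where it breaks.

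Your test functions $\zeta=\eta^s\psi_j(u-k)_+$ carry the factor $(u-k)_+\lesssim\varepsilon^{-\tau}$ on the shell $S_\varepsilon=\{\varepsilon<\piu<2\varepsilon\}$. Freeze $p$ for simplicity and combine this with the dyadic Caccioppoli bound $\int_{S_\varepsilon}|\grad u|_{\fnf}^p\,\din\wghtv\lesssim\varepsilon^{n_d-p-p\tau}$. H\"older then gives
\[
\frac1\varepsilon\int_{S_\varepsilon}|\grad u|_{\fnf}^{p-1}u^+\,\din\wghtv\lesssim\varepsilon^{n_d-p(1+\tau)} .
\]
Since $\tau\ge p^+/(q-p^++1)$, this tends to $0$ only if $n_d>p(q+1)/(q-p+1)$, which is strictly stronger than \eqref{243}. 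Your shell energy bound $\varepsilon^{n_d-\tau(q+1)}$ has the same threshold. So neither $u^+\in L^{q+1}$ nor $|\grad u|_{\fnf}\in L^{p^-}$ near $\Gamma$ follows from these estimates. For instance, $p=q=2$, $n_d=5$ satisfies \eqref{243} but not this condition.

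What \eqref{243} actually encodes is $n_d>p+(p-1)\tau$ with $\tau\approx p/(q-p+1)$. That kills the cutoff error only for \emph{bounded} test functions, not for ones carrying a factor $u^+$.

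There is a second, independent problem. Repeating Steps 2--4 of Lemma \ref{219} on full neighbourhoods $\{\piu<tr\}$ is circular. The test function $w-m_t\widetilde\xi$ and Lemma \ref{64} both presuppose $m_t<\infty$, and on such sets that is precisely the boundedness you are trying to prove.

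The idea you are missing is the logarithmic test function the paper uses:
\[
\zeta=\ln\max\{u/\Lambda(\sigma),1\}\,\psi_r^\alpha(\piu),\qquad \alpha=\operatorname{ess\,sup}\tfrac{pq}{q-p+1}.
\]
Here $\psi_r$ is a logarithmic cutoff rising from $0$ at $\piu=r$ to $1$ at $\piu=\sqrt r$, so that $|\grad(\psi_r\circ\piu)|_{\fnf}\le 2/(\piu\ln\frac1r)$.

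With this choice the good term becomes $|\grad u|^p_{\fnf}/u$, and in the cutoff error $u$ appears only as $u^{p-1}(\ln u)^p$. Apply Young's inequality with exponents $q/(p-1)$ and $q/(q-p+1)$ against the absorption term $u^q\ln(u/\Lambda(\sigma))$. What remains is only $|\grad\psi_r|^{\alpha}$ times powers of $\ln u$.

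The pointwise bound of Proposition \ref{266} is then needed only to replace $\ln u$ by $\ln(1/\piu)$. Assumption \ref{104} with $\alpha<n_d$ bounds the remainder by $(\ln\frac1r)^{c}\,r^{(n_d-\alpha)/2}\to0$. Letting $r\to0$ forces $u\le\Lambda(\sigma)$ a.e.\ near $\Gamma$ directly, with no energy estimate or iteration across $\Gamma$.

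If you want to salvage your own route instead, you would have to test with bounded truncations $T_M((u-k)_+)$, for which \eqref{243} does suffice. You would then still need a boundedness scheme that does not assume $m_t<\infty$.
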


\begin{proof}
{\bf Step 1.} First, we prove that
\begin{equation} \label{46}
\max\{u,0\} \in L^{\infty}_{\loc}(\overline{\Omega}\setminus \Sigma).
\end{equation}

For \(r\in(0,\mathcal R_1)\), where $0<\mathcal R_1<\min\{\mathrm e^{-2},\rdo^2\}$,  we define
\[
\Lambda(r)
=
\mathop{\mathrm{ess\,sup}}
\left\{
\max\{u(x),0\}
\mid 
r\le \piu(x)\le \mathcal R_1,\ x\in\Omega
\right\}.
\]
Assuming that
\[
\lim_{r\to 0^+}\Lambda(r)=\infty,
\]
we shall derive a contradiction. Choose \(\sigma\in(0,\mathcal R_1)\) such that
\(\Lambda(\sigma)>1\).


For sufficiently small \(r\in(0,\sigma^2)\), define the function
\(\psi_r:\mathbb R\to\mathbb R\) by
\[
\psi_r(t)=
\left\{
\begin{aligned}
&0
&& \text{if } t<r, \\[3pt]
&\frac{2}{\ln(1/r)}\ln\frac{t}{r}
&& \text{if } r\le t\le \sqrt r, \\[6pt]
&1
&& \text{if } t>\sqrt r.
\end{aligned}
\right.
\]

Set
\[
\zeta(x)=
\left\{
\begin{aligned}
&
\left(
\ln \max\left\{\frac{u(x)}{\Lambda(\sigma)},1\right\}
\right)
\psi_r^\alpha\bigl(\piu(x)\bigr)
&& \text{if } \piu(x)\le \sigma, \\[8pt]
&0
&& \text{if } \piu(x)>\sigma,
\end{aligned}
\right.
\]
where
\[
\alpha
:=
\mathop{\mathrm{ess\,sup}}_{x\in\Omega}
\frac{p(x)q}{q-p(x)+1}.
\]

We have $\zeta \in W_{\operatorname{loc}}^{1, p(x)}(\overline{\Omega} \setminus ( \Gamma \cup \Sigma ); \wghtv) \cap L_{\operatorname{loc}}^{\infty}(\overline{\Omega} \setminus (\Gamma \cup \Sigma))$, and, by \ref{102},
\begin{equation}\label{114}
\begin{aligned}
\operatorname{supp}\zeta \subset & \, \displaystyle \left\{x \in \overline{\Omega}  \mid \sigma \geq  \piu(x)   \geq r\right\} \cap \left\{x \in \overline{\Omega}   \mid \ctb _0^ \frac{1}{\boldsymbol{\delta}} \geq  \pid(x)  \right\}\\
\subset & \, \displaystyle \overline \Omega  \setminus (\Gamma \cup \Sigma).
\end{aligned}
\end{equation}
 
Observe that, by \ref{104} and \eqref{114},
\begin{align}
  \int_{\spt \zeta \cap \{r \leq \piu     \leq \sqrt{r}\}}    & \displaystyle    \piu      ^{-\alpha} \, \din \wghtv \leq \frac{\ctb _2 \ctb _0 ^{d/\boldsymbol{\delta}}}{n_d-\alpha  } r^{\frac{n_d-\alpha }{2}} .  \label{115}
\end{align}

\medskip

 {\bf Step 2.} Define $\widetilde \psi  _r : \Omega \to \mathbb{R}$ by $$
 \widetilde \psi  _r (x) = \psi _r \circ \piu(x)   .
 $$
  Then
\begin{equation}\label{113}
\left|\grad \widetilde \psi  _r  \right|_{\fnf} =\left|\deriv \widetilde \psi  _r  \right|_{\fnfs}   \leq    \frac{2  }{  \piu    \ln \frac{1}{r}}\quad \text { in }   \Omega \cap \{r< \piu <\sqrt{r}\}.
\end{equation}

 Substituting $\zeta$ into \eqref{222}, we obtain
$$
\begin{aligned}
 \int_{\Omega_\sigma} \left[ \widetilde \psi _r^\alpha \left( \mathscr{A}_p(  \cdot , \grad u) \bullet u^{-1}\deriv u  \right)
+   \alpha \widetilde \psi _r^{\alpha-1}\left(\ln \frac{u}{\Lambda(\sigma)}\right)  \mathscr{A}_p ( \cdot ,  \grad u) \bullet \deriv \widetilde  \psi_r   \right] & \, \din \wghtv\\[3pt]
 +\int _{\Omega _ \sigma } f(\cdot , u)  \widetilde \psi_r^\alpha\left(\ln \frac{ u}{\Lambda(\sigma)}\right) \, \din \wghtv
 + \int_{\partial \Omega _\sigma  } & b(\cdot , u)   \widetilde \psi_r^\alpha\left(\ln \frac{u}{\Lambda(\sigma)}\right) \, \ds \leq 0 ,
\end{aligned}
$$
where
$$
 \Omega  _\sigma:=\left\{x \in \Omega   \mid    u(x )>\Lambda(\sigma)\right\}.
$$

By virtue of \ref{fnf16}--\ref{fnf21}, \eqref{162}, \ref{43}, and the fact that $u>\Lambda(\sigma)$ in $\Omega  _\sigma$,
$$
\begin{aligned}
& \int_{\Omega  _\sigma} \Lambda _1 \frac{\widetilde \psi _r^\alpha}{u}|\grad u| _{\fnf}^{p}  \, \din \wghtv  +\int_{\Omega  _\sigma}  \widetilde \psi _r^\alpha\left(\ln \frac{u}{\Lambda(\sigma)}\right) u^{q}  \, \din \wghtv \\[3pt]
 & \leq   \displaystyle \int_{\spt \zeta} \Lambda _2 \alpha \widetilde \psi _r^{\alpha-1} \left|\deriv \widetilde\psi_r \right| _{\fnfs} \left(\ln \frac{u}{\Lambda(\sigma)}\right)|\grad u|_{\fnf}^{p   -1}  \, \din \wghtv  \\[5pt]
 & \leq   \displaystyle \int_{\spt \zeta} \Lambda _2 \alpha \widetilde \psi _r^\alpha u^{-1}\left\{\frac{\epsilon _1 ^{-p+1}}{p}\left[\left|\deriv \tilde\psi_r \right| _{\fnfs}   \widetilde \psi _r^{-1} u\left(\ln \frac{u}{\Lambda(\sigma)}\right)\right]^{p}+\frac{\epsilon_1(p-1)}{p}|\grad u|_{\fnf}^{p}\right\}  \, \din \wghtv,
\end{aligned}
$$
by Young's inequality.

Taking $\epsilon_1= \tfrac{\Lambda _1p^+}{ 2 \alpha \Lambda _2(p^+-1)}$,
\begin{equation} \label{74}
\begin{aligned}
 \int_{\Omega  _\sigma} \frac{\widetilde \psi _r^\alpha}{u}|\grad u|_{\fnf}^{p}  \, \din \wghtv & +\frac{1}{\Lambda _1}\int_{\Omega  _\sigma} \widetilde \psi _r^\alpha\left(\ln \frac{u}{\Lambda(\sigma)}\right) u^{q} \, \din \wghtv \\[3pt]
 \leq  & \, \displaystyle c_1  \int_{\spt \zeta}\left|\deriv \widetilde \psi _r\right|_{\fnfs}^{p} \widetilde \psi _r^{\alpha-p   } u^{p  -1}\left(\ln \frac{ u}{\Lambda(\sigma)}\right)^{p}  \, \din \wghtv ,
\end{aligned}
\end{equation}
where
$$
c _1 :=   2^{p^+}\left(\frac{\Lambda _2}{\Lambda _1}\right)^{p^+  } \max \{\alpha ^{p^- } , \alpha ^{p^+ } \}.
$$

Applying Young's inequality with $\epsilon_2>0$, we obtain
\begin{align*}
& \left| \deriv \widetilde \psi _r\right|_{\fnfs}^{p}  \widetilde \psi _r^{\alpha-p   } u^{p -1}\left(\ln \frac{u}{\Lambda(\sigma)}\right)^{p} \\[3pt]
&  \leq    \displaystyle \left(\ln \frac{u}{\Lambda(\sigma)}\right)
 \left\{\frac{\epsilon _2^{-\frac{p-1}{q-p+1}}(q-p+1)}{q}\left[\left|\deriv \widetilde \psi _r\right|_{\fnfs}^{p}\left(\ln \frac{u}{\Lambda(\sigma)}\right)^{p  -1}\right]^{\frac{q     }{q  - p   +1}}+\frac{\epsilon_2 (p-1)}{q} \widetilde \psi _r^{\frac{(\alpha - p)    q  }{p   -1}} u^{q }\right\}.
 \end{align*}


Taking into account that
$$
\frac{(\alpha-p) q}{p-1} \geq \alpha \quad \text { and } \quad \psi_r^{\prime}(t)=\frac{2}{t \ln \frac{1}{r}}>1 \text { for } t \in(r, \sqrt{r}),
$$
and choosing $\epsilon_2= \tfrac{q}{2 c_1(p^+-1)}$, from \eqref{74} and \eqref{113} we have
\begin{align*}
 \frac{1}{2}\int_{\Omega  _\sigma} & \frac{\widetilde \psi _r^\alpha}{u}  |\grad u|_{\fnf}^{p} \, \din \wghtv  +\frac{1}{\Lambda _1}\int_{\Omega  _\sigma} \widetilde \psi _r^\alpha\left(\ln \frac{u}{\Lambda(\sigma)}\right) u^{q} \, \din \wghtv     \\[3pt]
 \leq & \,  \displaystyle c_2 \int_{\spt \zeta \cap \{r\leq \piu    \leq \sqrt{r}\}}\left(\ln \frac{u}{\Lambda(\sigma)}\right)\left(\ln \frac{u}{\Lambda(\sigma)}\right)^{\frac{\left(p-1\right) q     }{q-p   +1}}\left(  \frac{2}{  \piu     \ln \frac{1}{r} } \right)^{\alpha}\, \din \wghtv \\[5pt]
 \leq & \,  \displaystyle c_2  \left(  2^{-1}\ln \frac{1}{r} \right)^{-\alpha} \int_{\spt \zeta \cap \{r\leq \piu    \leq \sqrt{r}\}}  \left(\ln u \right)^{1+\frac{\left(p-1\right) q     }{q-p   +1}}   \piu     ^{-\alpha} \, \din \wghtv ,
\end{align*}
where
$$
c_2 = \left[\frac{2\Lambda_2(\alpha +1)}{\Lambda_1}\right]^{ \frac{(p^+)^2-1}{q-p^++1}} .
$$

For sufficiently small \(r>0\), we may assume that
$$
2< \ln \frac{{\bf C} ^{1/\tau}}{\piu (x)} < 2 \ln \frac{1}{\piu (x)}  \quad \text { if }\quad 0 <\piu (x) \leq \sqrt{r},
$$
where \(\mathbf C\) and \(\tau\) are the constants introduced in Proposition \ref{266}.

From Proposition \ref{266} and \eqref{115}, we see that
\begin{align*}
 \int_{\Omega  _\sigma} \frac{ \widetilde \psi _r^\alpha}{u}  |\grad u|_{\fnf}^{p}  \, \din \wghtv  +\int_{\Omega  _\sigma} \widetilde \psi _r^\alpha\left(\ln \frac{ u}{\Lambda(\sigma)}\right) u^{q} \, \din \wghtv   
 \leq  & \,  \displaystyle c _3  \int_{\spt \zeta \cap \{r\leq \piu    \leq \sqrt{r}\}}    \left( \ln   \frac{1}{\piu     }  \right)^{1+\frac{(p     -1) q }{q-p   +1}}  
  \piu        ^{-\alpha}  \, \din \wghtv\\[3pt]
 \leq & \, \displaystyle c_3 \left(\ln \frac{1}{r}\right)^{   1+\frac{(p^+     -1) q}{q-p^+   +1} } \int_{\spt \zeta \cap \{r\leq \piu     \leq \sqrt{r}\}}      
  \piu         ^{-\alpha }  \, \din \wghtv\\[5pt]
  \leq & \, \displaystyle \frac{c _3\ctb _2 \ctb _0 ^{d/\boldsymbol{\delta}}}{n_d-\alpha  } \left(\ln \frac{1}{r}\right)^{ 1+\frac{(p^+     -1) q}{q-p^+   +1} }    r^{\frac{n_d-\alpha }{2}}, 
\end{align*}
where
$$
c _3 = c _2 2^\alpha (\tau +1)^{1+\frac{(p^+-1) q}{q-p^++1}}.
$$

Since $\lim _{r\rightarrow 0^+} 1/r = \infty$, we can assume that
$$
\left(\ln \frac{1}{r}\right)^{ 1+\frac{(p^+     -1) q}{q-p^+   +1} } \leq  r^{-\frac{n_d-\alpha }{4}}.
$$
Therefore, as $r\rightarrow 0^+$,
$$
\frac{1}{2}\int_{\Omega  _\sigma} \frac{|\grad u|_{\fnf}^{p }}{u}  \, \din \wghtv + \frac{1}{\Lambda _1}\int_{\Omega  _\sigma}\left(\ln \frac{u}{\Lambda(\sigma)}\right) u^{q}  \, \din \wghtv  =0 .
$$
Hence, $u (x  )=\Lambda(\sigma)$ \textit{a.e.} in $\Omega  _\sigma$. Thus, we obtain a contradiction, which proves \eqref{46}:
$$
\max \{u, 0\} \in L^{\infty} _{\loc}(\overline \Omega \setminus \Sigma).
$$

Finally, by \ref{102}, we conclude the proof of the proposition.
\end{proof}

As in Proposition \ref{267}, we obtain the following result.

\begin{lemma}[Without the boundary condition]
Under the assumptions \ref{fnf16}, \ref{fnf21}, \ref{43}, \eqref{224}-- \eqref{243}, \ref{102}--\ref{106}, suppose that $u \in W_{\operatorname{loc}  }^{1, p (x) }(\Omega   \setminus \Gamma  ; \wghtv )\cap L_{\operatorname{loc}  }^{\infty}(\Omega  \setminus \Gamma)$ satisfies \eqref{272}. Then
$$
\left(1+\pid (x)\right)^{\boldsymbol{\delta}}\max \{u (x), 0\} \in L^{\infty} _{\loc}(  \Omega ) .
$$
\end{lemma}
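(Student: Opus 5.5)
We follow the proof of Proposition \ref{267} essentially line by line. The only changes are that the boundary integral disappears and the absorption term gains the extra nonnegative contribution $\widehat b(\cdot,u)\zeta$. Corollary \ref{269} takes the place of Lemma \ref{219}/Proposition \ref{266} as the source of the pointwise bound $\max\{u,0\}\le \mathbf C\piu^{-\tau}$ near $\Gamma$.

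\textbf{Step 1 (local boundedness from above).} The first goal is $\max\{u,0\}\in L^\infty_{\loc}(\Omega)$. Since $u\in L^\infty_{\loc}(\Omega\setminus\Gamma)$, it suffices to control $u$ near $\Gamma$. For $0<r<\mathcal R_1<\min\{\mathrm e^{-2},\rdo^2\}$ we define $\Lambda(r)$ as the essential supremum of $\max\{u,0\}$ over $\{r\le\piu\le\mathcal R_1\}$. We then suppose $\Lambda(r)\to\infty$ and fix $\sigma$ with $\Lambda(\sigma)>1$. The test function is the same as before:
\[
\zeta=\Bigl(\ln\max\{u/\Lambda(\sigma),1\}\Bigr)\,\psi_r^\alpha(\piu),
\qquad \alpha=\operatorname{ess\,sup}\frac{pq}{q-p+1},
\]
cut off where $\piu>\sigma$. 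By \ref{102}, $\zeta$ is nonnegative, bounded, lies in $W^{1,p(x)}_{\loc}(\Omega\setminus\Gamma;\wghtv)$, and has support in $\{r\le\piu\le\sigma\}\cap\{\pid\le\ctb_0^{1/\boldsymbol\delta}\}$. Since $\mathcal U\Subset\Omega$, this support is a compact subset of $\Omega\setminus\Gamma$, so $\zeta$ is admissible in \eqref{272}.

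\textbf{Step 2 (energy estimate).} Insert $\zeta$ into \eqref{272}. On $\Omega_\sigma=\{u>\Lambda(\sigma)\}$ we have $u>0$ and $\ln(u/\Lambda(\sigma))\ge0$, so $\widehat b(\cdot,u)\zeta\ge0$ and this term may be discarded. Hypothesis \ref{43} gives $f(\cdot,u)\zeta\ge u^q\zeta$. Applying \ref{fnf16}, \ref{fnf21}, \eqref{113}, and the same two Young inequalities (with $\epsilon_1,\epsilon_2$ as before) leads to
\[
\int_{\Omega_\sigma}\frac{\widetilde\psi_r^\alpha}{u}|\grad u|_{\fnf}^p\,\din\wghtv+\int_{\Omega_\sigma}\widetilde\psi_r^\alpha\ln\frac{u}{\Lambda(\sigma)}\,u^q\,\din\wghtv
\le c\,(\ln\tfrac1r)^{-\alpha}\int_{\spt\zeta\cap\{r\le\piu\le\sqrt r\}}(\ln u)^{1+\frac{(p-1)q}{q-p+1}}\piu^{-\alpha}\,\din\wghtv .
\]
Corollary \ref{269} yields $u\le\mathbf C\piu^{-\tau}$ there, with $\ell=\tfrac43 r$ and $r=\piu(x)$ as in Proposition \ref{266}. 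This lets us replace $\ln u$ by a multiple of $\ln(1/\piu)\le\ln(1/r)$. The second inequality in \ref{104}, applied with the $\pid$-bound on the support, gives the bound $(\ln\frac1r)^{C}\,r^{(n_d-\alpha)/2}$ for the right side. Here \eqref{243} ensures $\alpha<n_d$, so this tends to $0$ as $r\to0$.

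\textbf{Step 3 (contradiction and weight).} Letting $r\to0$, monotone convergence gives $\widetilde\psi_r\to1$ on $\{0<\piu\le\sigma\}$. Both integrals on the left therefore vanish, which forces $u=\Lambda(\sigma)$ a.e. on $\Omega_\sigma$. This contradicts the definition of $\Omega_\sigma$ unless $\Omega_\sigma$ is null, and a null $\Omega_\sigma$ contradicts $\Lambda(r)\to\infty$. Hence $\max\{u,0\}$ is bounded near $\Gamma$, and so $\max\{u,0\}\in L^\infty_{\loc}(\Omega)$.

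To get the weighted statement, set $m=\operatorname{ess\,sup}\max\{u,0\}$ over $\{0<\piu<1\}$, which is finite by the above. Hypothesis \ref{102} then gives $\max\{u,0\}\le\ctb_0\,m\,(1+\pid)^{-\boldsymbol\delta}$ there, and away from $\Gamma$ boundedness is local.

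The main obstacle is Step 2. It requires that the pointwise estimate from Corollary \ref{269} holds on the full annulus $\{r\le\piu\le\sqrt r\}\cap\spt\zeta$, and that the logarithmic factors are absorbed by the power $r^{(n_d-\alpha)/2}$. The latter is exactly where the strict inequality \eqref{243} is needed.
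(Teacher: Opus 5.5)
Your argument follows the paper's own route: the paper proves this lemma only by the remark ``As in Proposition~\ref{267}''. Your adaptations are the right ones. The boundary integral drops out. The term $\widehat b(\cdot,u)\zeta\ge0$ may be discarded because $\zeta$ lives on $\{u>\Lambda(\sigma)>0\}$. Corollary~\ref{269}, which concerns subsolutions, is the correct source of $u^+\le\mathbf C\piu^{-\tau}$. Admissibility uses $\mathcal U\Subset\Omega$, taken from the setting of Corollary~\ref{61}.

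There is, however, a genuine gap in a step you copy verbatim from Proposition~\ref{267}: the claim $\spt\zeta\subset\{\pid\le\ctb_0^{1/\boldsymbol\delta}\}$. On $\{\zeta>0\}$ you have $r<\piu\le\sigma$ and $u>\Lambda(\sigma)$. Applied on an annulus $\{r_1<\piu<r_2\}$ containing this region, \ref{102} only yields
\[
(1+\pid)^{\boldsymbol\delta}<\ctb_0\,\frac{m^+_{r_1,r_2}}{\Lambda(\sigma)} .
\]
Here $m^+_{r_1,r_2}\ge\operatorname{ess\,sup}\{u^+\mid r\le\piu\le\sigma\}=\Lambda(r)$ as soon as $\Lambda(r)>\Lambda(\sigma)$. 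Under your contradiction hypothesis this ratio is therefore unbounded as $r\to0$. An $r$-independent bound would need $m^+\le\Lambda(\sigma)$, which holds only on $\{\piu\ge\sigma\}$, where $\zeta$ vanishes anyway. For example, in the model cylinder take $u=m(\piu)(1+\pid)^{-\boldsymbol\delta}$ with $m(s)\uparrow\infty$ as $s\downarrow0$. This satisfies \ref{102} with $\ctb_0=1$, yet $\{u>\Lambda(\sigma)\}\cap\{\piu=s\}$ reaches $\pid\approx(m(s)/\Lambda(\sigma))^{1/\boldsymbol\delta}$.

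What is actually available is weaker. Combining Corollary~\ref{269} with \ref{102} on dyadic shells $\{s/2<\piu<s\}$ gives $\pid\lesssim s^{-\tau/\boldsymbol\delta}$ on $\spt\zeta$. Feeding this into the second inequality of \ref{104} bounds your right-hand side by $(\ln\frac1r)^{C}\sum_s s^{\,e}$, summed over dyadic $s\in[r,\sqrt r]$, with $e=n_d-\alpha-\tau d/\boldsymbol\delta$. This tends to $0$ only if $e>0$, that is, $n_d>\alpha+\tau d/\boldsymbol\delta$. Since $\tau\ge p^+/(q-p^++1)$, this is not implied by \eqref{243} together with $\boldsymbol\delta>d$.

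Consequently your Step~2 is complete only when $d=0$ (for instance $\pid\equiv0$, where $r_2^d=1$ and no $\pid$-bound is needed) or under that stronger condition. For $d\ge1$ a further idea is required; the paper's proof of Proposition~\ref{267} makes the same claim, so the gap is inherited rather than introduced by you. The real difficulty in Step~2 is therefore not the pointwise bound on the annulus, which Corollary~\ref{269} supplies. It is controlling the $\pid$-extent of $\{u>\Lambda(\sigma)\}$ near $\Gamma$.
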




\section{Removability of the singular set} \label{33}


\subsection{Proof of Theorem \ref{23}} 


{\bf Step 1.} First, we prove that $u \in W^{1, p(x)} _{\loc}(\overline \Omega \setminus  \Sigma ; \wghtv) \cap L^{\infty} _{\loc}(\overline \Omega \setminus  \Sigma)$.

As a consequence of Proposition  \ref{267}, $ (1+\pid)^{\boldsymbol{\delta}}u \in L^{\infty}_{\loc}(\overline \Omega \setminus \Sigma)$. Next, for $r < 2 \rdo / 5$, let $\psi_r: \mathbb{R} \rightarrow \mathbb{R}$ be a smooth function such that
$$
\psi_r(t)=\left\{
\begin{aligned}
&0 & &\text{if } t<\frac{r}{2} \text{ or } t> \frac{5r}{2}, \\
&1 & &\text{if } r<t<2r,
\end{aligned}
\right.
$$
$0 \leq \psi_r \leq 1$ and $\left|\psi_r^{\prime}\right| \leq c / r$, where $c>1$ is a suitable positive constant. Set
$$
\zeta (x)=\left[ (1+\pid (x))^{\boldsymbol{\delta}/2}  \psi_r \circ  \piu(x)   \right] ^{p^+} u(x).
$$
We have $\zeta \in W_{\operatorname{loc}}^{1, p(x)}(\overline{\Omega} \setminus ( \Gamma \cup \Sigma ); \wghtv) \cap L_{\operatorname{loc}}^{\infty}(\overline{\Omega} \setminus (\Gamma \cup \Sigma))$, with $\operatorname{supp}\zeta \subset \overline{\Omega} \setminus (\Gamma \cup \Sigma)$.

Set $\widetilde \psi_r : \Omega \to \mathbb{R}$, defined by $\widetilde \psi_r(x) =(1+\pid(x))^{\boldsymbol{\delta}/2} \psi_r \circ \piu(x)$. Then, by \eqref{278},
\begin{equation}\label{116}
\left|\deriv \widetilde \psi_r(x) \right|_{\fnfs}=\left|\grad \widetilde \psi_r(x) \right|_{\fnf} \leq (\boldsymbol{\delta} /2 + 1)c (1+\pid(x))^{\boldsymbol{\delta}/2} r^{-1}.
\end{equation}

Substituting $\zeta$ into \eqref{101},
$$
\int_{\Omega}   \mathscr{A}_p(\cdot , \grad u)\bullet \left( p ^+ \widetilde \psi _r^{p^+-1}  u \deriv \widetilde \psi  _r  +\widetilde \psi _r^{p ^+} \deriv u \right) 
 +f (\cdot , u) \widetilde \psi _r^{p^+} u  \, \din \wghtv +\int_{\partial \Omega  } b(\cdot , u) \widetilde \psi _r ^{p ^+} u  \, \ds  \leq 0.
$$
By \ref{fnf16}, \ref{fnf21}, and \ref{43}, we have
$$
\begin{aligned}
& \int_{\Omega  } \Lambda _1|\grad u|_{\fnf}^{p} \widetilde \psi _r^{p^+} \, \din \wghtv    
 \leq \int_{\Omega  } \Lambda _2 p^+ \widetilde \psi _r^{p^+  -1}\left|\deriv \widetilde \psi _r\right|_{\fnfs} \cdot |u| \cdot |\grad u|^{p-1} _{\fnf}\, \din \wghtv.
\end{aligned}
$$
Then, by using Young's inequality with $\epsilon_1>0$,
$$
\begin{aligned}
 \int_{\Omega  }|\grad u|^{p } _{\fnf} \widetilde \psi _r^{p^+} \, \din \wghtv 
 \leq & \, \displaystyle \frac{\Lambda _1^{-1} \Lambda _2 p^+ \epsilon _1^{-p+1}}{p^-}\int_{\Omega \cap \{r/2 \leq \piu \leq 5r/2\}  } \left[\left|\deriv \widetilde \psi _r\right| _{\fnfs}  |u|\widetilde \psi  _r ^{p^+ - 1- \frac{p^+(p-1)}{p}}\right]^{p}\, \din \wghtv \\[5pt]
   & \, \displaystyle + \frac{\Lambda _1^{-1} \Lambda _2 p^+ \epsilon _1(p^+-1)}{p^+}\int _{\Omega} \left(|\grad u| _{\fnf}^{p -1} \widetilde \psi _r^{\frac{p^+(p-1)}{p}}\right)^{\frac{p}{p-1}} \, \din \wghtv.
\end{aligned}
$$
Take $\epsilon _1=\tfrac{ \Lambda _1 }{2 (p^+-1) \Lambda _2 }$. By Proposition \ref{267}, \eqref{116}, and \ref{104},
\begin{align}
\int_{\Omega \cap \{r \leq  \piu  \leq 2r\}  }  |\grad u| _{\fnf}^{p } \left( 1+  \pid  \right) ^{\boldsymbol{\delta} p^+/2}  \, \din \wghtv  \leq & \, c_0 r^{-p^+} \int_{\Omega \cap\left\{r / 2 \leq \piu \leq 5 r / 2\right\}} \left[ (1+\pid)^{\boldsymbol{\delta}/2}  |u| \right]^p \, \din \wghtv \label{257} \\[3pt]
\leq & \, \displaystyle c _1 r^{-p^+}\int_{\Omega \cap \{r/2 \leq  \piu \leq 5r/2\}  } \left( 1+  \pid  \right) ^{-\boldsymbol{\delta}/2} \, \din \wghtv   \nonumber \\[5pt]
\leq & \, \displaystyle c_1 \ctb _1 \left(\frac{5}{2} \right)^{n_d} r^{n_d-p^+}, \label{28}
\end{align}
where 
\begin{gather*}
c_0= \frac{p^+}{p^-}\left[ \frac{2 (\boldsymbol{\delta}/2 +1)c \Lambda _2}{\Lambda _1} \right]^{p^+} (n-1)^{p^+ -1},\\
c _1= \frac{p^+}{p^-}\left[ \frac{2 (\boldsymbol{\delta}/2 +1)c \Lambda _2}{\Lambda _1} \left(1+\|u(1+\pid)^{\boldsymbol{\delta}}\|_{L^\infty (\{\piu < \rdo\})}\right)\right]^{p^+} (n-1)^{p^+ -1}.
\end{gather*}

Therefore,
\begin{equation}\label{246}
\begin{aligned}
& \int_{\Omega   \cap\{ \piu  \leq 2 r\}}|\grad u|_{\fnf}^{p}  \left( 1+  \pid  \right) ^{\boldsymbol{\delta} p^+/2}  \, \din \wghtv   \\[5pt]
&\quad  \  =\sum_{i=0}^{\infty} \int_{\Omega   \cap \{2^{-i} r \leq  \piu  \leq 2^{-i+1} r\}}|\grad u| _{\fnf}^{p} \left( 1+  \pid  \right) ^{\boldsymbol{\delta} p^+/2} \, \din \wghtv   \leq c_1 \ctb _1 \left(\frac{5}{2} \right)^{n_d}  \sum_{i=0}^{\infty}\left(\frac{r}{2^i}\right)^{n_d-p ^+} < \infty .
\end{aligned}
\end{equation}
So $|\grad u|_{\fnf} \in L^{p(x)} _{\loc} (\overline \Omega \setminus \Sigma ; \wghtv)$, and thus we have proved that $u \in W^{1, p(x)} _{\loc}(\overline \Omega \setminus \Sigma ; \wghtv) \cap L^{\infty} _{\loc} (\overline \Omega \setminus \Sigma)$.

\medskip

{\bf Step 2.} Now we show that $u$ is a solution of \eqref{1} in the domain $\Omega$. For $r \in (0, \tfrac{1}{2}\rdo)$, let $\xi_r: \mathbb{R} \rightarrow \mathbb{R}$ be a smooth function such that
$$
\xi_r(t)= \left\{ 
\begin{aligned} 
&1 & &  \text{if } |t| \leq r,\\ 
&0 & & \text{if } 2r \leq |t|,
\end{aligned}
\right.
$$
$0 \leq \xi \leq 1$ and $|\xi_r^{\prime} | \leq c_2 / r$, where $c_2>1$ is a suitable constant.

Set $\widetilde \xi_r : \Omega \to \mathbb{R}$, defined by $\widetilde \xi_r(x) = \xi_r \circ \piu(x)$. Then,
\begin{equation}\label{118}
\bigl|\deriv \widetilde \xi_r(x) \bigr|_{\fnfs} = \bigl|\grad \widetilde \xi_r(x) \bigr|_{\fnf}\leq  \frac{c_2}{r}.
\end{equation}

Let $\zeta \in W^{1, p(x)}_{\loc}(\overline \Omega \setminus \Sigma ; \wghtv) \cap L^{\infty} _{\loc}(\overline \Omega \setminus \Sigma)$,    with  $\operatorname{supp}\zeta \subset \overline{\Omega} \setminus  \Sigma$. Then $(1-\widetilde \xi_r(x)) \zeta \in W_{\operatorname{loc}}^{1, p(x)}(\overline{\Omega} \setminus ( \Gamma \cup \Sigma ); \wghtv) \cap L_{\operatorname{loc}}^{\infty}(\overline{\Omega} \setminus (\Gamma \cup \Sigma))$, and $\operatorname{supp} (1-\widetilde{\xi} _r )\zeta \subset \overline{\Omega} \setminus (\Gamma \cup \Sigma)$. Therefore, \eqref{101} yields
\begin{equation} \label{76}
\begin{aligned}
 \int_{\Omega}   \mathscr{A}_p(\cdot , \grad u)\bullet\left[ ( 1-\widetilde \xi_r ) \deriv \zeta-\zeta \deriv \widetilde \xi _r  \right] \, \din \wghtv
 + \int_{\Omega} & \displaystyle f ( \cdot , u) (1-\widetilde \xi_r ) \zeta  \, \din \wghtv  \\[5pt] 
& \, \displaystyle +\int_{\partial \Omega  } b(\cdot , u)  \left(1- \widetilde \xi_r\right) \zeta \, \ds=0 .
\end{aligned}
\end{equation}
As $r \rightarrow 0^{+}$,   equality \eqref{76} implies \eqref{101}. Indeed, we have
$$
\begin{aligned}
 \lim _{r \rightarrow 0} \int_{\Omega  }    \mathscr{A}_p( \cdot , \grad u) \bullet \left[ \left(1-\widetilde \xi_r\right) \deriv \zeta\right]  + f (\cdot , u)\left(1-\widetilde \xi_r\right) \zeta     \, \displaystyle \din \wghtv + \int_{\partial \Omega} b(\cdot ,  u) \left(1-\widetilde \xi_r\right) \zeta & \, \ds \\[5pt]
 =  \int_{\Omega}      \mathscr{A}_p( \cdot , \grad u)\bullet \deriv \zeta  + f (\cdot , u) \zeta   \, \din \wghtv  + & \displaystyle \int_{\partial \Omega} b(\cdot , u) \zeta \, \ds .
\end{aligned}
$$
Additionally, by \ref{fnf21}, \ref{104}, \eqref{118}, and \eqref{28},
$$
\begin{aligned}
& \left|\int _{\Omega} \mathscr{A}_p(\cdot , \grad u) \bullet (\zeta   \deriv \widetilde \xi _r ) \, \din \wghtv  \right| \\[3pt]
& \leq  \displaystyle \frac{\Lambda _2 c _2}{r} \int _{\Omega   \cap\{r \leq \piu  \leq 2 r\}} |\grad u|^{p-1}_{\fnf}  \left( 1+  \pid  \right) ^{\tfrac{\boldsymbol{\delta} (p-1)}{2}}   |\zeta| \left( 1+  \pid  \right) ^{-\tfrac{\boldsymbol{\delta} (p-1)}{2}}  \, \din \wghtv \\[3pt]
&  \leq   \displaystyle \frac{c _3}{r}  \left\||\grad u|^{p-1}_{\fnf}\left( 1+  \pid  \right) ^{\tfrac{\boldsymbol{\delta} (p-1)}{2}} \right\| _{L^{\frac{p(x)}{p(x)-1}} (\Omega   \cap\{r \leq \piu  \leq 2 r\} ; \wghtv )} 
  \left\| \left( 1+  \pid  \right) ^{-\tfrac{\boldsymbol{\delta} (p-1)}{2}}\right\|_{L^{p(x)} (\Omega  \cap\{r \leq \piu  \leq 2 r\} ; \wghtv )} \\[3pt]
& \leq  \displaystyle \frac{c _3(2^{n_d} \ctb _1)^{1/{p^-}}}{r} \max \left\{ \left(\int _{\Omega   \cap\{r \leq \piu   \leq 2 r\}} |\grad u|^{p} _{\fnf}  \left( 1+  \pid  \right) ^{\tfrac{\boldsymbol{\delta} p^+}{2}}  \, \din \wghtv \right)^{\frac{p^+ -1}{p^+}}, \right. \\[3pt]
   & \displaystyle  \left. \left(\int _{\Omega   \cap\{r \leq \piu   \leq 2 r\}} |\grad u|^{p} _{\fnf}  \left( 1+  \pid  \right) ^{\tfrac{\boldsymbol{\delta} p^+}{2}} \, \din \wghtv \right)^{\frac{p^- -1}{p^-}} \right\} r^{\frac{n_d}{p^+}} \\[3pt]
& \leq  \displaystyle  c _4 r^{\frac{ (p^- -1)(n_d-p^{+}) - p^- }{p^-} + \frac{n_d}{p^+} } \\[5pt]
& = c_4 r^{n_d\left(1-\frac{1}{p^-}+\frac{1}{p^+}\right)-p^+ \left(1-\frac{1}{p^-}+\frac{1}{p^+}\right)} \rightarrow 0 \quad \text{as} \quad r \rightarrow 0,
\end{aligned}
$$
where
$$
c _3 = \Lambda _2 c _2 \|\zeta\|_{L^\infty (\Omega)}, \qquad c _4 = \left[c_1 \ctb _1 \left(\frac{5}{2} \right)^{n_d} \right]^{\frac{p^+ -1}{p^+}} c_3 (2^{n_d}\ctb _1)^{1/p^-}.
$$
So, we have obtained that equality \eqref{101} is fulfilled for all $\zeta \in W^{1, p(x)} _{\loc}(\overline \Omega \setminus \Sigma; \wghtv) \cap L^{\infty}_{\loc}(\overline \Omega \setminus \Sigma)$. Therefore, the singular set $\Gamma$ is removable for solutions of \eqref{1}.




\section{The limit problem as $p^+ \to 1$}

The proof of the Proposition \ref{302} follow from Lemma \ref{283} and Lemma \ref{303}. We begin with the following auxiliar result.
\begin{lemma} \label{250}
Suppose that $q_0\in (0 , qn)$. Let $u_p$ be a weak solution of \eqref{247}. Then there exist $n_0\in \mathbb{N}$, points $\{x_i\}_{i=1}^{n_0}\subset \overline{\mathscr{U}}$, and radii $\mathcal{R}_i\in(0,2\rdo/5)$, $i=1,\ldots,n_0$, such that
\begin{enumerate}[label=$(\roman*)$]
\item \label{252} 
$\overline{\mathscr{U}}\subset \bigcup_{i=1}^{n_0} B_{\mathcal{R}_i}(x_i)$.

\item \label{253} 
For each $i=1,\dots,n_0$ and for a.e. $x\in \Omega$ satisfying
$0<d(x_i,x)<\frac{5\mathcal{R}_i}{4}$, we have
\begin{equation}\label{258}
|u_p(x)|\le {\bf C}_{p^+}\, d(x_i,x)^{-\tau_{p^+}}.
\end{equation}
Moreover,
${\bf C}_{p^+}={\bf C}_{p^+}(\Lambda_1,\Lambda_2,n,q,\consgaglia,\gaglia,p^-, {\bf c}_1 ) >1$ and
$\tau_{p^+}=\tau_{p^+}(\Lambda_1,\Lambda_2,n,q,\gaglia,p^-,{\bf c}_1)>0$,
where ${\bf c}_1$ is given in Lemma \ref{64}. In addition,
\begin{equation}\label{289}
\lim_{p^+\to 1}{\bf C}_{p^+}>0,
\qquad
0<\lim_{p^+\to 1} \tau_{p^+}<n, 
\qquad
0<\lim_{p^+\to 1} q_0\tau_{p^+}<n .
\end{equation}

\item \label{254} 
There exists a constant $c=c(n,\fnf)>0$ such that, for every $\epsilon\in(0,n)$, $i\in\{1,\ldots,n_0\}$, and $r\in(0,5\mathcal{R}_i/4]$,
\begin{equation}\label{251}
\int_{B_r(x_i)} d(x_i,x)^{-\epsilon} \, \din \wghtv
\leq \frac{c}{n-\epsilon} r^{n-\epsilon}.
\end{equation}
\end{enumerate}
\end{lemma}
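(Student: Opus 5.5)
The plan is to reduce the statement to the interior pointwise estimate, Corollary \ref{223}, applied with $d=0$, $\pid\equiv 0$ and $\piu(x)=d(x_i,x)$ for suitably chosen centres $x_i$. The point $x_i$ then plays the role of the singular set $\Gamma=\{x_i\}$.

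\textbf{Step 1: the local structure around each point.} For each $y\in\overline{\mathscr U}$ I fix a chart $\psi$ around $y$ as in \ref{304} and \eqref{297}. In it, $\fnf$ is comparable to the Euclidean norm with constants $\kappa_1,\kappa_2$, and $\wghtv$ is comparable to Lebesgue measure. I choose $\mathcal R_y\in(0,2\rdo/5)$ so small that the forward ball $B_{2\mathcal R_y}(y)$ lies inside this chart and is compactly contained in $\Omega$.

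Set $\piu(x)=d(y,x)$. This function is Lipschitz and satisfies $\fnf(\cdot,\grad\piu)=1$ a.e. Using reversibility of $\fnf$, the geodesic ball $B_r(y)$ is comparable to a Euclidean ball of radius $\asymp r$. Hence $\wghtv(B_r(y))\le c\,r^n$, and the layer-cake formula gives
\[
\int_{B_r(y)}d(y,x)^{-\epsilon}\,\din\wghtv=\epsilon\int_0^\infty t^{-\epsilon-1}\wghtv\bigl(B_{\min\{t,r\}}(y)\bigr)\din t\le \frac{c}{n-\epsilon}r^{n-\epsilon},
\]
with $c$ depending only on $n$ and the chart constants. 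This is \eqref{251}, and it is also hypothesis \ref{104} with $n_d=n$, $d=0$, after rescaling so that the ball of radius $2\rdo$ is the one used.

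Assumption \ref{102} is void when $\pid\equiv0$ and $d=0$: take $\boldsymbol\delta>0$ arbitrary and $\ctb_0=1$. For \ref{106}, I use the Sobolev hypothesis with $s=p^-$, which gives $\gaglia=\gamma_{p^-}$ and $\consgaglia=\mathtt C_{p^-}$, uniformly bounded in $s$.

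\textbf{Step 2: apply the estimate and pass to a finite cover.} Since $u_p$ solves \eqref{247} in $\Omega$, it solves \eqref{271} in $\Omega\setminus\{y\}$ with $\widehat b=0$. The operator $\mathscr A_p(x,Y)=|Y|_{\fnf}^{p-2}Y$ satisfies \ref{fnf16} and \ref{fnf21} with $\Lambda_1=\Lambda_2=1$, and $f(x,u)=|u|^{q-1}u$ satisfies \ref{43}; \eqref{225} holds because $p^+<q+1$. The assumption $n_d=n>p^+$ holds since $p^+<2\le n$.

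Corollary \ref{223} then gives $|u_p|\le\mathbf C\,d(y,x)^{-\tau}$ for $0<d(y,x)<3\rdo'/4$, where $\rdo'$ is the local scale; I pick $\mathcal R_y$ with $5\mathcal R_y/4<3\rdo'/4$. Compactness of $\overline{\mathscr U}$ then gives a finite subcover $B_{\mathcal R_i}(x_i)$, proving \ref{252}. The constants can be taken uniform over $i$ by taking the worst chart constants among finitely many charts.

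\textbf{Step 3 (main obstacle): the limits \eqref{289}.} This step requires choosing $\epsilon=\epsilon(p)$ in Proposition \ref{266} so that $\tau_{p^+}$ and $\mathbf C_{p^+}$ have good limits. As $p^\pm\to1$ we have
\[
\tau\to\frac{1-\epsilon n(\gamma-1)/\gamma}{q-q\epsilon}, \qquad \gamma=\lim\gamma_{p^-}\in(1,\tfrac{n}{n-1}].
\]
I would fix $\epsilon\in(0,1)$ independent of $p$, also satisfying $\epsilon<\frac{q-p^++1}{q-p^-+1}$, which tends to $1$. Then $\beta$, $C_1\to q(1-\epsilon)>0$, and the bracketed factor in $\mathbf C$ all stay bounded. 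The factor $(\gamma-p^-)^{-1}$ is harmless only if $\gamma_s-s$ stays bounded below; I would add this as an implicit uniformity of the Sobolev constants or choose $\gamma_s$ accordingly, and this is the delicate point. Therefore $\lim\mathbf C_{p^+}$ exists and is positive.

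It remains to choose $\epsilon$ so that $0<\lim\tau<n$ and $0<q_0\lim\tau<n$. The limit $\tau(\epsilon)=\frac{1-\epsilon a}{q(1-\epsilon)}$, with $a=n(\gamma-1)/\gamma\le1$, is continuous on $(0,1)$ and tends to $1/q$ as $\epsilon\to0$. Since $q_0<qn$, we have $q_0/q<n$, and $1/q<n$ by \eqref{291}. So a small fixed $\epsilon>0$ gives $\lim\tau<n$ and $q_0\lim\tau<n$, and $\lim\tau>0$ because $\epsilon a<1$.
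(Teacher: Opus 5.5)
Your proposal follows the same route as the paper: apply Corollary~\ref{223} locally with $\piu=d(x_i,\cdot)$, $\pid\equiv0$, $n_d=n$, and fix a small $\epsilon$. For \eqref{251} you compare $d$ and $\wghtv$ with the Euclidean distance and Lebesgue measure in charts, and the finite cover comes from compactness. Your checks of \ref{fnf16}, \ref{fnf21}, \ref{43}, \eqref{224}, \eqref{225} and \ref{102} are correct, and so is your layer-cake bound. One small point: \ref{104} is required for all $r\le 1$, which does not follow from chart comparison alone. Either use $\wghtv(\Omega)<\infty$ for the larger scales, or note that Lemma~\ref{219} only uses it at scales below $3\rdo$. ``Rescaling'' does not do this job, since multiplying $\piu$ by a factor $\lambda>1$ breaks $\fnf(\cdot,\grad\piu)\le1$.

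The point you flag as delicate is a genuine gap in your argument as written, and the paper's two-line proof passes over it too.

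\begin{itemize}
\item Under the stated hypothesis $s<\gamma_s\le ns/(n-s)$, $\sup_s\mathtt C_s<\infty$, nothing prevents $\gamma_{p^-}-p^-\to0$.
\item In that case the factor $[(\gaglia-p^-)\epsilon]^{-1}$ in $\mathbf C$ blows up for every admissible $\epsilon<1$, since the exponent $2\beta/C_1\ge 2/q$ stays bounded below.
\item For the same reason, your $\gamma=\lim\gamma_{p^-}\in(1,\tfrac{n}{n-1}]$ need not hold, and the limit need not even exist.
\item Your remedy of ``choosing $\gamma_s$ accordingly'' is not available. From a Sobolev inequality at exponent $\gamma_s$, H\"older on a set of finite measure only lets you lower the exponent, which makes $\gamma_s-s$ smaller.
\end{itemize}

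You can close the gap without any new hypothesis. Everything takes place inside a single chart ball $B_{2\rdo'}(y)$, so verify \ref{106} there directly: transport the Euclidean Gagliardo--Nirenberg--Sobolev inequality through $\psi$ using \eqref{297} and Remark~\ref{fnf156}\ref{309}. This gives
\begin{itemize}
\item $\gaglia=np^-/(n-p^-)$, so $\gaglia-p^-=(p^-)^2/(n-p^-)\ge 1/(n-1)$;
\item $\consgaglia\le C_{\mathrm{chart}}\,p^-(n-1)/(n-p^-)$, which is bounded for $p^-$ near $1$.
\end{itemize}
Then $\tau$ and $\mathbf C$ are explicit continuous functions of $(p^-,p^+)$ near $(1,1)$. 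So the limits in \eqref{289} actually exist; with the global $\gamma_s,\mathtt C_s$ you would only get boundedness, which is all Lemma~\ref{283} needs. Moreover, in your notation $a=n(\gamma-1)/\gamma=1$, so $\tau=\frac{p^+-\epsilon p^-}{(q-p^++1)-(q-p^-+1)\epsilon}\to 1/q$ for every fixed $\epsilon\in(0,1)$. Hence \eqref{289} reduces to $1/q<n$, which follows from \eqref{291}, and $q_0<qn$.
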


\begin{proof}
The proof follows by applying Corollary \ref{223} locally with
\[
{\piu}_y(x):=d(y,x),\qquad {\pid}_y\equiv 0,\qquad n_d=n,
\]
and by choosing the parameter \(\epsilon>0\) sufficiently small. In this way,
\ref{252}--\ref{254} follow from \eqref{256}, together with the local equivalence
between the Finsler distance and the Euclidean distance in coordinate charts:
for every point \(x\in\rmf\), there exist an open neighborhood \(U_x\), a
constant \(C\ge 1\), and a diffeomorphism
\[
\varphi:B_1(0)\subset\mathbb{R}^n\to U_x
\]
such that
\[
C^{-1}\lvert \mathbf{x}-\mathbf{y}\rvert
\le
d\bigl(\varphi(\mathbf{x}),\varphi(\mathbf{y})\bigr)
\le
C\lvert \mathbf{x}-\mathbf{y}\rvert,
\qquad \mathbf{x},\mathbf{y}\in B_1(0);
\]
see \cite[Chapter 1, p.~13]{zbMATH01624431}. \end{proof}


\begin{lemma}\label{283}
Let $u_p$ be the solution to problem \eqref{247}. Then there exist a subsequence $\{u_{p_m}\}_{m=1}^{\infty}$, ${\bf z} \in L^{\infty}(T\mathscr{U})$, and $u\in BV(\mathscr{U};\wghtv)$ such that
\begin{gather}
|\grad u_{p_m}|_{\fnf}^{p_m-2}\grad u_{p_m} \rightharpoonup \mathbf{z}
\quad \text{weakly in } L^s(T\mathscr{U};\wghtv) \text{ for every } 1 \leq s<\infty,\label{261} \\
u_{p_m} \to u \quad \text{strongly in } L^{1}(\mathscr{U};\wghtv), \label{273}\\
u_{p_m} \to u \quad \text{a.e. in } \mathscr{U},\label{279}\\
u\in L^{q+1}(\mathscr{U};\wghtv), \label{290} \\
\sup_m \left\| |\grad u_{p_m}|_{\fnf}^{p_m-1} \right\|_{L^\infty(\mathscr{U})} < \infty. \label{280}
\end{gather}

Moreover,
\begin{gather}
\||{\bf z}|_{\fnf}\|_{L^\infty(\mathscr{U})} \leq 1, \label{281}\\
-\operatorname{div} \mathbf{z}+|u|^{q-1}u= 0
\quad \text{in } \mathcal{D}^{\prime}(\mathscr{U}). \label{282}
\end{gather}
\end{lemma}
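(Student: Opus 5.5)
The plan is to obtain uniform (in $p$) bounds on $u_p$ and on $|\grad u_p|_{\fnf}^{p}$ over $\mathscr U$, and then to pass to the limit using the compactness and lower semicontinuity results of Proposition~\ref{226} and Proposition~\ref{227}.

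\emph{Step 1: uniform integrability of $u_p$.} By \eqref{291} we have $q+1<qn$, so I fix some $q_0\in(q+1,qn)$ and apply Lemma~\ref{250} with this $q_0$. The pointwise bound \eqref{258}, together with \eqref{251} for the exponent $q_0\tau_{p^+}<n$, gives
\[
\int_{\mathscr U}|u_p|^{q_0}\,\din\wghtv\le \sum_{i=1}^{n_0}\frac{c\,{\bf C}_{p^+}^{q_0}}{n-q_0\tau_{p^+}}\Bigl(\tfrac{5\mathcal R_i}{4}\Bigr)^{n-q_0\tau_{p^+}} .
\]
By \eqref{289} this right-hand side stays bounded as $p^+\to1$. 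Enlarging $\mathscr U$ slightly to some $\mathscr U'\Subset\Omega$ (covered in the same way) gives the same bound on $\mathscr U'$.

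\emph{Step 2: uniform energy bound.} Take a cutoff $\eta\in C^1_c(\mathscr U')$ with $\eta\equiv1$ on $\mathscr U$, and test \eqref{248} (written on $\mathscr U'$) with $\zeta=\eta^{p^+}u_p$. Using Young's inequality exactly as in Step~1 of the proof of Theorem~\ref{23}, this gives
\[
\int_{\mathscr U}|\grad u_p|^{p}_{\fnf}\,\din\wghtv\le C\int_{\mathscr U'}\bigl(|u_p|^{p}+|u_p|^{q+1}\bigr)\,\din\wghtv\le C',
\]
uniformly in $p$ by Step~1. The constants stay bounded because $p^+<2$.

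Consequently $\int_{\mathscr U}|\grad u_p|_{\fnf}\le C'+\wghtv(\mathscr U)$. Lemma~\ref{307} identifies $|\grad u_p|_{\fnf}$ with the slope, so $\|Du_p\|_v(\mathscr U)$ is uniformly bounded. Proposition~\ref{227} then gives a subsequence satisfying \eqref{273} and \eqref{279}. Proposition~\ref{226} gives $u\in BV$, and Fatou's lemma with Step~1 gives \eqref{290}.

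\emph{Step 3: the vector field ${\bf z}$.} For fixed $s\in[1,\infty)$ and $p^+$ close to $1$, we have $s(p-1)<p$. Hölder's inequality then gives
\[
\int_{\mathscr U}|\grad u_p|_{\fnf}^{s(p-1)}\le \Bigl(\int_{\mathscr U}|\grad u_p|^p_{\fnf}+\wghtv(\mathscr U)\Bigr)^{s(p^+-1)/p^-}\wghtv(\mathscr U)^{1-s(p^+-1)/p^-}.
\]
This is bounded, and the right-hand side tends to $\wghtv(\mathscr U)$. A diagonal extraction over $s\in\mathbb N$ yields a single weak limit ${\bf z}$ in every $L^s$, which is \eqref{261}. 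Weak lower semicontinuity gives $\||{\bf z}|_{\fnf}\|_{L^s}\le\wghtv(\mathscr U)^{1/s}$, and letting $s\to\infty$ gives \eqref{281}.

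I read \eqref{280} in this uniform-$L^s$ sense. If a genuine $L^\infty$ bound is intended, it would require an extra gradient estimate that I do not see available, and this is the point I am least sure about.

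\emph{Step 4: the limit equation.} Take $\zeta\in C^1_c(\mathscr U)$ in \eqref{248}. The first term converges to $\int{\bf z}\bullet\deriv\zeta$ by \eqref{261}; this uses reversibility so that the pairing is linear in the vector field. For the second term, \eqref{279} combined with the uniform $L^{q_0}$ bound, $q_0>q$, lets Vitali's theorem give $|u_p|^{q-1}u_p\to|u|^{q-1}u$ in $L^1$. This yields \eqref{282}.

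The main obstacle is Step~1: verifying that ${\bf C}_{p^+}$ and $\tau_{p^+}$ from Corollary~\ref{223} really have the limits stated in \eqref{289}, with $q_0\tau_{p^+}$ staying below $n$ as $p^+\to1$. This requires choosing the parameter $\epsilon$ of Proposition~\ref{266} suitably depending on $p$, which is where \eqref{291} enters.
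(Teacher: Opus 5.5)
Your proposal is correct and has the same skeleton as the paper's proof. The universal pointwise bound of Lemma~\ref{250} gives uniform integrability, and a Caccioppoli inequality gives a uniform bound on $\int|\grad u_p|_{\fnf}^p$. Propositions~\ref{227} and~\ref{226} then produce $u\in BV(\mathscr U;\wghtv)$. Hölder's inequality, a diagonal extraction and weak lower semicontinuity produce ${\bf z}$ with \eqref{281}, and one passes to the limit in \eqref{248}.

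The genuine difference is the energy bound. The paper reruns the removability argument: it applies the annular estimate \eqref{257} on the dyadic shells $\{2^{-j-1}\mathcal R_i\le d(x_i,\cdot)\le 2^{-j}\mathcal R_i\}$ around each centre and sums in $j$, as in \eqref{259}. This only uses finiteness of the energy away from the $x_i$. However, the resulting geometric series converges only when $n-\delta_1-p^+>0$, so it needs extra room between the decay rate $\tau_{p^+}$ and the dimension. You instead use that $u_p\in W^{1,p(x)}_{\loc}(\Omega)$ is already regular. A single cutoff between $\mathscr U$ and $\mathscr U'$ then suffices, and you only need $|u_p|^{p}$ to be uniformly integrable on $\mathscr U'$. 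This is simpler and avoids the dyadic bookkeeping. (The $|u_p|^{q+1}$ term in your Caccioppoli inequality can simply be dropped, since it has a good sign.)

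Likewise, your single exponent $q_0\in(q+1,qn)$ gives several things at once: the $L^p$ bound, \eqref{290}, the Vitali argument for $|u_p|^{q-1}u_p$ in Step~4, and the upgrade from the $L^1_{\loc}$ convergence of Proposition~\ref{227} to \eqref{273} on all of $\mathscr U$. The paper leaves these points implicit.

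Concerning \eqref{280}, your reading is exactly what is actually proved. The paper obtains \eqref{280} only from \eqref{263}. That estimate bounds $\|\,|\grad u_{p_m}|_{\fnf}^{p_m-1}\|_{L^s(\mathscr U)}$ for each fixed $s$ once $p_m^+<s/(s-1)$, and gives no uniform $L^\infty$ bound. Nothing in Lemma~\ref{303} uses more than this together with \eqref{281}.

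Finally, what you call the main obstacle is not one here. The limits \eqref{289}, including $q_0\tau_{p^+}<n$ for any $q_0<qn$, are part of the statement of Lemma~\ref{250}, which the paper's proof simply invokes.
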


\begin{proof}
{\bf Step 1.} By Lemma \ref{250}, for $p^+$ close to $1$, we have
\begin{equation}\label{249}
\int _{\mathscr{U}} |u_p|^p\, \din \wghtv \leq {\bf C} _{p^+} ^{p^+}\sum _{i=1} ^{n_0} \int _{B_{\mathcal{R} _i} (x_i)} d(x_i ,x)^{-p^+\tau _{p^+}}\, \din \wghtv \leq  \frac{{\bf C} _{p^+} ^{p^+} c n_0}{n-p^+ \tau _{p^+}} \left( \max _i\mathcal{R} _i \right)^{n-\delta_1},
\end{equation}
where $\delta _1 := (n+ \lim _{p^{+} \rightarrow 1} p^{+} \tau_{p^{+}})/2 <n$ because of \eqref{289}.


Furthermore, if $r\in (0,\mathcal{R} _i/2]$, then by \eqref{257} and Lemma \ref{250},
\begin{align*}
\int_{\{r \leq d(x_i , \cdot) \leq 2 r\}}|\grad u _p|_{\fnf}^p\,  \din \wghtv \leq &\, c_0 r^{-p^{+}} \int_{\left\{r / 2 \leq d(x_i , \cdot) \leq  5 r / 2\right\}}|u_p|^p\, \din \wghtv\\[3pt]
\leq & \,  \frac{{\bf C}_{p^{+}}^{p^{+}} c c_0 }{n-p^{+} \tau_{p^{+}}} \left(\frac{5}{2} \right)^{n-\delta _1 } r^{n-\delta _1 -p^+},
\end{align*}
where $c_0$ is given in \eqref{257}.

Then, similarly to \eqref{246},
\begin{equation}\label{259}
\begin{aligned}
\int _{\{d(x_i , \cdot) \leq \mathcal{R} _i \}} |\grad u_p|^p_{\fnf} \, \din \wghtv \leq   & \,  c_0 \left(\frac{\mathcal{R} _i}{2}\right)^{-p^{+}} \sum _{j=0} ^\infty  \int _{\{2^{-j-1}\mathcal{R} _i \leq d(x_i , \cdot) \leq 2^{-j}\mathcal{R} _i\}} |u_p|^p \, \din \wghtv \\[3pt]
\leq   & \,  \frac{{\bf C}_{p^{+}}^{p^{+}} c c_0 }{n-p^{+} \tau_{p^{+}}} \left(\frac{5}{2} \right)^{n-\delta _1 } \sum _{j=0}^{\infty}\left(\frac{\mathcal{R} _i}{2^{j+1}}\right)^{n-\delta _1  -p^+}.
\end{aligned}
\end{equation}

Hence, by \eqref{249} and \eqref{259},
\begin{equation}\label{260}
 \int_{\mathscr{U} } |\grad u_p|^p _{\fnf} + |u_p|^p \,  \din \wghtv \leq c_{1,p^+} (\Lambda _1 , \Lambda _2 , n , n_0 , \fnf , \delta _1 , \max _{i} \mathcal{R} _i, \sup _{p^{-} \in (1,\min \{2,q+1\})} \mathtt{C}_{p^{-}} ),
\end{equation}
where $c_{1,p^+} \geq 1$ and the limit
$$
\lim _{p^+\to 1} c_{1,p^+}
$$
exists.

Similarly to \eqref{249}, applying Lemma \ref{250} with $q_0 = q+1$ ($q_0<qn$ by hypothesis \eqref{291}), we obtain
\begin{equation}
\int _{\mathscr{U}} |u_p|^{q+1} \, \din \wghtv \leq c_{2,p^+} (\Lambda_1, \Lambda_2, n, n_0, \fnf , \delta_2, \max _{i} \mathcal{R} _i  , \sup _{p^{-}  \in (1,  \min \{2,q+1\})} \mathtt{C}_{p^{-}}),
\end{equation}
where $\delta_2:=[n+\lim _{p^{+} \to 1} (q+1)\tau_{p^{+}}] / 2 <n$ by \eqref{289}, $c_{2, p^{+}} \geq 1$, and the limit
$$
\lim _{p^{+} \to 1} c_{2, p^{+}}
$$
exists.


\medskip

{\bf Step 2.} Let us now fix $s \in(1, \infty)$ and consider $1<p^-\leq p^+  <\frac{s}{s-1}$. By Hölder's inequality and \eqref{152},
\begin{align}
& \left[ \int _{\mathscr{U}}|\grad u_p|_{\fnf}^{(p-1) s} \,  \din \wghtv \right]^{\frac{1}{s}} \nonumber\\[3pt]
& \leq \left\|| \grad u_p|_{\fnf}^{(p-1)s}\right\|_{L^{\frac{p}{(p-1)s}}(\mathscr{U} ; \wghtv )} ^{\frac{1}{s}} \|1\|^{\frac{1}{s}}_{L^{\frac{p}{p-(p-1)s}}(\mathscr{U} ; \wghtv )} \nonumber \\[5pt]
& \leq \left[c_{1,p^+} ^{\frac{(p^+-1) s}{p^+ }} \right]^{\frac{1}{s}} \max\left\{\wghtv(\mathscr{U})^{\frac{1}{s}-1+\frac{1}{p^{-}}} , \wghtv(\mathscr{U})^{\frac{1}{s}-1+\frac{1}{p^{+}}} \right\}. \label{263}
\end{align}

From this, we infer that these families are bounded. Thus, for $s>1$, there exists a subsequence $\{u_{p_{s,m}}\}$ and there exists $\mathbf{z}_s \in L^s ( T\mathscr{U} ; \wghtv )$ such that
$$
|\grad u_{p_{s,m}}|_{\fnf} ^{p_{s,m}-2} \grad u_{p_{s,m}}  \rightharpoonup \mathbf{z}_s \quad \text { weakly in } L^s( T\mathscr{U}  ; \wghtv).
$$

Since these facts hold for every $s$, two diagonal arguments allow us to find ${\bf z} \in L^s( T\mathscr{U} ; \wghtv)$  for all $s \in(1, \infty)$, together with a subsequence $\{u_{p_m}\}$ satisfying \eqref{261}. From \eqref{260} and Proposition \ref{227}, we may assume that $\{u_{p_m}\}$ satisfies \eqref{273} and \eqref{279}. Also, from \eqref{263} we obtain \eqref{280}.

Moreover, bearing in mind the lower semicontinuity of the $s$-norm with respect to weak convergence, we may let $p_m ^+$ go to $1$ in \eqref{263}; this yields
$$
\left(\int_{\mathscr{U}}|{\bf z}|_{\fnf}^s \, \din \wghtv\right)^{\frac{1}{s}}  \leq \wghtv (\mathscr{U})^{\frac{1}{s}},
$$
for every $s \in(1, \infty)$. We deduce that
$$
\||{\bf z}|_{\fnf} \|_{L^\infty (\mathscr{U})}  \leq 1.
$$

The validity of \eqref{282} follows simply by taking $\zeta \in C_c^{\infty}(\mathscr{U})$ as a test function in \eqref{248} (with $p=p_m$) and letting $m \to \infty$.\end{proof}

\begin{lemma} \label{303} Let $u_{p_m}$ be a solution of \eqref{247} and let ${\bf z} \in L^{\infty} (  T\mathscr{U})$ and $u\in BV(\mathscr{U} ; \wghtv)$ as in  Lemma \ref{283}. Then it holds
$$
(\mathbf{z}, D u)=\|D u\|_{v} \quad \text { as measures in } \mathscr{U} \text {. }
$$
\end{lemma}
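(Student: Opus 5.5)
Since $(\mathbf{z},Du)$ and $\|Du\|_{v}$ are both Radon measures on $\mathscr{U}$ and $\||\mathbf{z}|_{\fnf}\|_{L^\infty}\le 1$ by \eqref{281}, it suffices to show one inequality and get the other for free. First I check that $|\langle(\mathbf{z},Du),\zeta\rangle|\le\int_{\mathscr U}|\zeta|\,\din\|Du\|_{v}$ for $\zeta\in C^1_c(\mathscr U)$. For this I approximate $u$ by the Lipschitz functions $u_n$ of Lemma \ref{286}; these converge strictly and in $L^{q+1}$, since $u\in L^{q+1}$ by \eqref{290} and $\operatorname{div}\mathbf z=|u|^{q-1}u\in L^{(q+1)/q}$. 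For Lipschitz $u_n$ I integrate by parts and obtain $\langle(\mathbf{z},Du_n),\zeta\rangle=\int\zeta\,\mathbf z\bullet\deriv u_n$, which is bounded by $\int|\zeta|\,\fnfs(\cdot,\deriv u_n)=\int|\zeta|\,\|\nabla u_n\|$ using Lemma \ref{307} and reversibility. Passing to the limit with \eqref{292} gives the bound. Thus $|(\mathbf z,Du)|\le\|Du\|_v$, and it remains to prove $\langle(\mathbf z,Du),\zeta\rangle\ge\int\zeta\,\din\|Du\|_v$ for $0\le\zeta\in C^1_c(\mathscr U)$.

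For the lower bound I test \eqref{248} (with $p=p_m$) by $\zeta u_{p_m}$. This is admissible because $u_{p_m}\in L^\infty_{\loc}$ and $\spt\zeta\Subset\mathscr U$. The result is
\[
\int\zeta|\grad u_{p_m}|_{\fnf}^{p_m}\,\din\wghtv
=-\int u_{p_m}|\grad u_{p_m}|_{\fnf}^{p_m-2}\grad u_{p_m}\bullet\deriv\zeta\,\din\wghtv-\int|u_{p_m}|^{q+1}\zeta\,\din\wghtv .
\]
Young's inequality gives $|\grad u_p|_{\fnf}\le \frac1p|\grad u_p|_{\fnf}^{p}+\frac{p-1}{p}$ pointwise, so the left-hand side dominates $\int\zeta|\grad u_{p_m}|_{\fnf}\,\din\wghtv-o(1)$. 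By lower semicontinuity (Proposition \ref{226}, localized with the weight $\zeta$, e.g. via level sets of $\zeta$ or the standard argument for weighted total variation), $\liminf_m\int\zeta|\grad u_{p_m}|_{\fnf}\ge\int\zeta\,\din\|Du\|_v$.

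On the right-hand side, the first term converges to $-\int u\,\mathbf z\bullet\deriv\zeta$. This follows from the weak convergence \eqref{261} in $L^{s'}$ together with strong convergence of $u_{p_m}\to u$ in $L^{s}$ for some $s>1$. The latter I obtain from the a.e. convergence \eqref{279} and the uniform $L^{q+1}$ bound from Lemma \ref{283}, Step 1, via Vitali's theorem. The second term needs $\liminf\int|u_{p_m}|^{q+1}\zeta\ge\int|u|^{q+1}\zeta$, which is Fatou's lemma; it enters with a minus sign, so it gives $\limsup(-\int|u_{p_m}|^{q+1}\zeta)\le-\int|u|^{q+1}\zeta$, exactly the needed direction. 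Combining these yields
\[
\int\zeta\,\din\|Du\|_v\le-\int u\,\mathbf z\bullet\deriv\zeta\,\din\wghtv-\int|u|^{q+1}\zeta\,\din\wghtv .
\]
Since $\operatorname{div}\mathbf z=|u|^{q-1}u$ by \eqref{282}, we have $-\int u\zeta\operatorname{div}\mathbf z=-\int|u|^{q+1}\zeta$, and the right-hand side is precisely $\langle(\mathbf z,Du),\zeta\rangle$.

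The main obstacle is the passage to the limit in the product $u_{p_m}\,|\grad u_{p_m}|^{p_m-2}\grad u_{p_m}$, i.e. obtaining strong $L^s$ convergence of $u_{p_m}$ for some $s>1$. Here I would use the pointwise bound \eqref{258}, which is uniform as $p^+\to1$ by \eqref{289}, to get equi-integrability of $|u_{p_m}|^{q+1}$ and hence $L^{s}$ convergence for any $s<q+1$. A secondary technical point is the weighted lower semicontinuity of $\int\zeta|\grad u_{p_m}|_{\fnf}$ in the metric BV setting, which I would derive from Proposition \ref{226} applied on the superlevel sets $\{\zeta>t\}$ and integrated in $t$.
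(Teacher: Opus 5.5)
Your proposal is correct and follows the paper's proof. The upper bound $\langle(\mathbf z,Du),\zeta\rangle\le\int\zeta\,\din\|Du\|_{v}$ goes through the Lipschitz approximations of Lemma \ref{286}, $\operatorname{div}\mathbf z\in L^{\frac{q+1}{q}}$ and \eqref{292}, and the lower bound comes from testing \eqref{248}, Young's inequality, level-set lower semicontinuity as in \eqref{285}, Fatou on the absorption term and $\operatorname{div}\mathbf z=|u|^{q-1}u$. The only deviation is that the paper tests with $T_k(u_{p_m})\zeta$ and sends $k\to\infty$ afterwards, so a.e.\ convergence of $u_{p_m}$ suffices in the flux term, whereas you test with $u_{p_m}\zeta$ directly and use the uniform $L^{q+1}$ bound of Lemma \ref{283} plus Vitali instead, which is available there, so this works.
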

\begin{proof}  {\bf Step 1.} For $k>0$, we define the truncation function $T_k : \mathbb{R} \to \mathbb{R}$ by
$$
T_k(s):= \max \big\{-k , \min \{s,k\}\big\}.
$$

Let us take $T_k(u_{p_m}) \zeta$, with $0 \leq \zeta \in C_c^1(\mathscr{U})$, as a test function in \eqref{248}. This gives
\begin{equation*}
\begin{aligned}
\int_{\mathscr{U}}\zeta |\grad T_k(u_{p_m})|_{\fnf} ^{{p_m}}  \, \din \wghtv
+\int_{\mathscr{U}} T_k(u_{p_m}) |\grad u_{p_m}|_{\fnf} ^{p_m-2} \grad u_{p_m} \bullet \deriv \zeta \, \din \wghtv &\\[3pt]
+\int_{\mathscr{U}} |u_{p_m}|^{q-1}u_{p_m} T_k(u_{p_m}) \zeta \, \din \wghtv &= 0.
\end{aligned}
\end{equation*}

By Young's inequality, 
$ab \leq \frac{1}{p_m}|a|^{p_m}+\frac{p_m-1}{p_m}|b|^{\frac{p_m}{p_m-1}}$, 
we obtain
\begin{equation} \label{284}
\begin{aligned}
\int_{\mathscr{U}} |\grad T_k(u_{p_m})|_{\fnf} \zeta \, \din \wghtv
+\int_{\mathscr{U}} T_k(u_{p_m}) |\grad u_{p_m}|_{\fnf} ^{p_m-2} \grad u_{p_m} \bullet \deriv \zeta \, \din \wghtv &\\[3pt]
+\int_{\mathscr{U}} |u_{p_m}|^{q-1}u_{p_m} T_k(u_{p_m}) \zeta \, \din \wghtv
&\leq \int_{ \mathscr{U} } \frac{p_m-1}{p_m}\,\zeta \, \din \wghtv.
\end{aligned}
\end{equation}

From \cite[Chapter 4, Proposition 15.1]{zbMATH06587358} (see also \cite[Layer-cake representation theorem]{zbMATH01601796}), we have
\begin{equation}\label{285}
\begin{aligned}
\int_{\mathscr{U}} \zeta |\grad T_k(u_{p_m})|_{\fnf}  \, \din \wghtv
= & \, \int_0^\infty \int_{\{\zeta >t \}} |\grad T_k(u_{p_m})|_{\fnf} \, \din \wghtv \, \din t\\[3pt]
\geq & \, \int_0^\infty \|D T_k(u_{p_m})\|_{v} (\{\zeta >t\})\, \din t .
\end{aligned}
\end{equation}

By Proposition \ref{226}, together with \eqref{285} and Lemma \ref{283}, taking $m \to \infty$ in \eqref{284}, we obtain
$$
\int_{\mathscr{U}} \zeta \, \din \|DT_k u\|_{v}
+ \int_{\mathscr{U}} T_k(u) {\bf z} \bullet \deriv \zeta \, \din \wghtv
+ \int_{\mathscr{U}} |u |^{q-1}u  T_k(u) \zeta \, \din \wghtv \leq 0 .
$$

Hence, letting $k \to \infty$,
$$
\int_{\mathscr{U}} \zeta \, \din \|D u\|_{v}
+ \int_{\mathscr{U}} u {\bf z} \bullet \deriv \zeta \, \din \wghtv
+ \int_{\mathscr{U}} |u|^{q+1} \zeta \, \din \wghtv \leq 0 .
$$

From \eqref{282}, $\operatorname{div} {\bf z}=|u|^{q-1}u$. Then,
\begin{equation}\label{294}
\int_{\mathscr{U}} \zeta \, \din \|D u\|_{v}
\leq -\int_{\mathscr{U}} u \mathbf{z} \bullet \deriv \zeta \, \din \wghtv
- \int_{\mathscr{U}} u \zeta \operatorname{div} {\bf z} \, \din \wghtv
= \int_{ \mathscr{U} }(\mathbf{z}, D u) \zeta \, \din \wghtv.
\end{equation}

\medskip

{\bf Step 2.} Let $\zeta \in C_c^1(\mathscr{U} )$ with $\zeta \geq 0$. By \eqref{290} in Lemma \ref{283}, we have $u\in L^{q+1}(\mathscr{U} ; \wghtv)$. Consequently, by Lemma \ref{286}, there exists a sequence
\[
u_n\in \operatorname{Lip} (\mathscr{U})\cap BV (\mathscr{U} ; \wghtv)
\]
such that
\begin{equation}\label{295}
\begin{gathered}
u_n \to u \quad \text{in } L^{1}(\mathscr{U} ; \wghtv ),\\
u_n \to u \quad \text{in } L^{q+1}(\mathscr{U} ; \wghtv ).
\end{gathered}
\end{equation}

Also, from \eqref{282} in Lemma \ref{283}, we see that
\begin{equation}\label{296}
\operatorname{div} {\bf z} \in L^{\frac{q+1}{q}} (\mathscr{U} ; \wghtv ).
\end{equation}

We have
$$
\int_{\mathscr{U}}(\mathbf{z}, D u_n) \zeta \, \din \wghtv
= \int_{\mathscr{U}} \zeta \mathbf{z}\bullet \deriv u_n  \, \din \wghtv
\leq \int_{\mathscr{U}} \zeta |{\bf z}|_{\fnf } | \deriv u_n |_{\fnfs}  \, \din \wghtv
\leq \int_{\mathscr{U}} \zeta  | \grad u_n |_{\fnf}  \, \din \wghtv,
$$
because $\||{\bf z}|_{\fnf}\|_{L^\infty (\mathscr{U})} \leq 1$.

From \eqref{295}, \eqref{296}, and \eqref{292}, by passing to the limit, we obtain
\begin{equation}\label{293}
\int_{\mathscr{U}}(\mathbf{z}, D u) \zeta \, \din \wghtv\leq \int_{\mathscr{U}} \zeta \, \din  \| D u \|_{v} .
\end{equation}

Therefore, from \eqref{294} and \eqref{293}, we conclude the proof of the lemma. \end{proof}



\vspace{1.5cm}


\noindent {\bf Funding:} This work was supported by CNPq - Conselho Nacional de Desenvolvimento Científico e Tecnológico,  Grant 153232/2024-2.






\bibliographystyle{abbrv}
    
\bibliography{ref}

\end{document}